\newtheorem{theorem}{Theorem}
\newtheorem{proposition}{Proposition}[section]
\newtheorem{lemma}[proposition]{Lemma}
\newtheorem{corollary}[proposition]{Corollary}
\theoremstyle{definition}
\newtheorem{definition}[proposition]{Definition}
\newtheorem{remark}[proposition]{Remark}
\numberwithin{equation}{section}
\colorlet{darkblue}{blue!90!black}
\colorlet{darkgreen}{green!50!black}
\def\restr{\mathbin{\upharpoonright}}
\newcommand\eps{\varepsilon}
\newcommand\e{{\rm e}}
\newcommand\dd{{\rm d}}
\newcommand\ddt{{\frac{\dd}{\dd t}}}
\def\Re{\mathop{\mathrm{Re}}}
\def\cl{\mathop{\mathrm{cl}}}
\def\Id{\mathord{\mathrm{Id}}}
\DeclareMathOperator{\Ker}{Ker}
\DeclareMathOperator{\tr}{tr}
\DeclareMathOperator{\supp}{supp}
\DeclareMathOperator{\Span}{span}
\def\l {\langle}
\def\r {\rangle}
\newcommand\de{{\partial}}
\newcommand\EE{{\mathbb E}}
\newcommand\PP{{\mathbb P}}
\newcommand\sym {{H_{\mathrm{sym}}}}
\newcommand\inv{\mathrm{inv}}
\newcommand\pp{\mathrm{pp}}
\newcommand\cont{\mathrm{cont}}
\newcommand\loc{\mathrm{loc}}
\newcommand{\NN}{\mathbb{N}}
\newcommand{\ZZ}{\mathbb{Z}}
\newcommand\TT {{\mathbb T}}
\newcommand\RR {{\mathbb R}}
\newcommand\cC{{\mathcal C}}
\newcommand\cD{{\mathcal D}}
\newcommand\cF{{\mathcal F}}
\newcommand\cK{{\mathcal K}}
\newcommand\cO{{\mathcal O}}
\newcommand\cS{{\mathcal S}}
\newcommand{\eqdef}{\stackrel{\mbox{\tiny\rm def}}{=}}
\DeclareRobustCommand{\TitleEquation}[2]{\texorpdfstring{\StrLeft{\f@series}{1}[\@firstchar]$\if%
b\@firstchar\boldsymbol{#1}\else#1\fi$}{#2}}
\renewcommand\subsubsection{\@startsection{subsubsection}{3}%
\normalparindent{.5\linespacing\@plus.7\linespacing}{-.5em}
{\normalfont\bfseries}}
\def\@tocline#1#2#3#4#5#6#7{\relax
  \ifnum #1>\c@tocdepth 
  \else
    \par \addpenalty\@secpenalty\addvspace{#2}%
    \begingroup \hyphenpenalty\@M
    \@ifempty{#4}{%
      \@tempdima\csname r@tocindent\number#1\endcsname\relax
    }{%
      \@tempdima#4\relax
    }%
    \parindent\z@ \leftskip#3\relax \advance\leftskip\@tempdima\relax
    \rightskip\@pnumwidth plus4em \parfillskip-\@pnumwidth
    #5\leavevmode\hskip-\@tempdima
      \ifcase #1
       \or\or \hskip 1em \or \hskip 2em \else \hskip 3em \fi%
      #6\nobreak\relax
    \dotfill\hbox to\@pnumwidth{\@tocpagenum{#7}}\par
    \nobreak
    \endgroup
  \fi}
\title[Stochastic RAGE and Enhanced Dissipation]{A Stochastic RAGE Theorem and Enhanced Dissipation \\ for Transport Noise}
\author[M. Coti Zelati]{Michele Coti Zelati\orcidlink{0000-0002-2495-2212}}
\address{Department of Mathematics, Imperial College London}
\email{m.coti-zelati@imperial.ac.uk}
\author[M. Hairer]{Martin Hairer\orcidlink{0000-0002-2141-6561}}
\address{EPFL, Lausanne, Switzerland and Imperial College London, UK}
\email{martin.hairer@epfl.ch}
\author[D. Villringer]{David Villringer\orcidlink{0009-0002-1681-4992}}
\address{Department of Mathematics, Imperial College London}
\email{d.villringer22@imperial.ac.uk}
\begin{document}

\subjclass[2020]{60H15, 35Q35, 35R60}

\keywords{Transport noise, enhanced dissipation, RAGE theorem}

\begin{abstract}
We prove a stochastic version of the classical RAGE theorem that applies to the two-point motion generated by noisy transport equations. As a consequence, we identify a necessary and sufficient condition for the corresponding diffusive equation to be dissipation enhancing. This involves the identification of a non-trivial, finite dimensional subspace that is invariant for the family of self-adjoint operator characterizing the structure of the transport noise. We discuss several examples and prove a sharp enhanced dissipation rate for stochastic shear flows.
\end{abstract}

\maketitle
\thispagestyle{empty}

\setcounter{tocdepth}{1}
\tableofcontents

\section{Diffusion and Mixing in Fluid Flows}
The evolution of a \emph{passive scalar} $f:[0,\infty)\times \TT^d\to \RR$ subject to drift in a given  incompressible velocity field $u:[0,\infty)\times \TT^d\to \RR^d$ and diffusion is described by the advection-diffusion equation
\begin{equation}\label{eq:ADE}
    \de_t f+u\cdot \nabla f=\nu \Delta f\:.
\end{equation}
This model, widely used to describe fluctuations around equilibrium of physically relevant quantities such as fluid density or temperature, retains two fundamental stabilization mechanisms that drive the system to its average: inviscid mixing, due to the presence of transport, and enhanced diffusion, due to the interaction of transport and diffusion of strength $\nu>0$. 

Given an initial datum $f_0:\TT^d\to \RR$ and under the evolution-preserved mean-free condition
\begin{equation}\label{eq:meanfree}
    \int_{\TT^d} f_0(x)\dd x=0\:,
\end{equation}
the corresponding energy equation
\begin{equation}\label{eq:diffener}
    \frac12\ddt \|f\|_{L^2}^2+\nu \|\nabla f\|_{L^2}^2=0
\end{equation}
implies that the solution vanishes as $t\to\infty$ exponentially fast, at a rate at least proportional to $\nu$. In other words, the characteristic diffusive time-scale $\cO(\nu^{-1})$ appears readily from combining Poincaré's inequality with Grönwall's lemma  in \eqref{eq:diffener}.

As \eqref{eq:diffener} is in particular true also when $u=0$, a crucial missing information in the above reasoning is the influence of a (possibly chaotic) velocity field $u$ on dissipation time scales. In fact, the appearance of shorter \emph{enhanced dissipation} 
time scales $\cO(T_\nu)$, with $T_\nu\ll \nu^{-1}$, has fascinated the scientific 
community since the late nineteenth century, although  mathematicians 
have only recently started looking at these questions from a rigorous perspective. Without any claim 
to exhaustiveness, we mention the works \cite{Constantin_Kiselev_Ryzhik_Zlatos_2008,BW13,CZDE20,Wei21,Seis23}, and we postpone the discussion of specific references to the next sections.

The aim of this work is to extend the seminal result of \cite{Constantin_Kiselev_Ryzhik_Zlatos_2008} on the characterization of enhanced dissipation for autonomous velocity fields, incorporating the case of random drifts driven by Gaussian processes, commonly referred to as \emph{transport noise}.



\subsection{Transport Noise and Enhanced Dissipation}\label{sub:transportnoise}
In this article, we consider \eqref{eq:ADE} on $\TT^d$ in the specific case of transport noise and we write 
\begin{equation}\label{eq:StochAdvDiff}
\dd f^\nu+\sum_{k=1}^{\infty} \sigma_k\cdot \nabla f^\nu \circ \dd W^k_t=\nu \Delta f^\nu \dd t\;,
\end{equation}
where $\circ$ denotes Stratonovich  integration. Here, $\{W^k_t\}_{k\in \NN}$ is a collection of independent standard Brownian motions and $\Sigma:=\{\sigma_k\}_{k\in \NN}$ is a collection of $C^2$ divergence-free vector fields. We solve \eqref{eq:StochAdvDiff} for initial conditions $f_0\in H\subset L^2(\TT^d)$, where $H$ is a closed subspace invariant under the evolution: 
in general, it consists of the mean-free functions, but it could further restrict the class of initial data, 
as in the case of shear flows (see Theorem~\ref{thm:shearEn} below).
Note that as a consequence of the smoothing property of \eqref{eq:StochAdvDiff}, $H$ is necessarily such
that $H \cap C^\infty$ is dense in $H$.

The definition of enhanced dissipation, adapted to the transport noise case, can be stated as follows: $\Sigma$ is dissipation enhancing if for any $\tau,\delta>0$, there exists $\nu_0>0$ such that for every $\nu\in(0,\nu_0)$ we have 
\begin{equation}\label{eq:decayEL2}
    \EE\| f^\nu(\tau/\nu)\|^2_{L^2}\leq \delta \| f_0\|^2_{L^2}\:,
\end{equation}
for every initial datum $f_0\in H$. Our main result provides a necessary and sufficient condition for enhanced dissipation in the stochastic setting.


%

\begin{theorem}\label{main result}
Exactly one of the following two statements holds true:
\begin{itemize}
\item The drift defined by $\Sigma:=\{\sigma_k\}_{k\in \NN}$ is dissipation enhancing.
\item There exists a non-trivial finite dimensional subspace \(V\subset H\cap H^1(\TT^d)\) so that \((\sigma_k \cdot \nabla) V \subset V\), for all \(k\in \NN\).
\end{itemize} 
\end{theorem}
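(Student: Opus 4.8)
The plan is to reduce everything to the \emph{two-point motion} generated by \eqref{eq:StochAdvDiff}. For $k\in\NN$ set $A_k := \sigma_k(x)\cdot\nabla_x + \sigma_k(y)\cdot\nabla_y$, which is skew-adjoint on $L^2(\TT^d\times\TT^d)$ because $\sigma_k$ is divergence-free, and let $\mathcal{L}_2 := \tfrac12\sum_k A_k^2$: a non-positive self-adjoint operator with $\Ker\mathcal{L}_2=\bigcap_k\Ker A_k$, leaving the closed invariant subspace $\mathcal{H}_2:=\overline{H\otimes H}\subset L^2(\TT^{2d})$ invariant. First I would show, by converting \eqref{eq:StochAdvDiff} to Itô form and applying Itô's formula to $f^\nu(t,x)f^\nu(t,y)$, that upon taking expectations the martingale term drops and the Itô corrections combine with the cross-variation into $\mathcal{L}_2$, so that $F(t):=\EE[f^\nu(t)\otimes f^\nu(t)]\in\mathcal{H}_2$ solves the \emph{deterministic} equation $\partial_t F=(\nu\Delta_{x,y}+\mathcal{L}_2)F$, $F(0)=f_0\otimes f_0$, with $\EE\|f^\nu(t)\|_{L^2}^2=\operatorname{Tr}F(t)$. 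Two elementary observations then organize the proof: (i) identifying kernels with Hilbert--Schmidt operators, $G\in\Ker\mathcal{L}_2$ iff $G$ commutes with every $\sigma_k\cdot\nabla$; (ii) if $V=\Span(v_1,\dots,v_m)$ with orthonormal $v_j$ satisfies $(\sigma_k\cdot\nabla)V\subset V$ for all $k$, then its reproducing kernel $g_V:=\sum_j v_j\otimes v_j$ lies in $\Ker\mathcal{L}_2\cap\mathcal{H}_2\cap H^1(\TT^{2d})$, since the matrix of $\sigma_k\cdot\nabla$ on $V$ is skew-symmetric.

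The two alternatives are mutually exclusive. Given such a $V$, take $f_0\in V$ with $\|f_0\|_{L^2}=1$ and set $\phi(t):=\langle g_V,F(t)\rangle=\EE\|P_V f^\nu(t)\|_{L^2}^2$ ($P_V$ the orthogonal projection onto $V$), so $\phi(0)=1$. Because $\mathcal{L}_2 g_V=0$, self-adjointness gives $\phi'(t)=\nu\langle\Delta_{x,y}g_V,F(t)\rangle=-\nu\langle\nabla g_V,\nabla F(t)\rangle$, and the energy bound $\nu\int_0^T\|\nabla F\|_{L^2}^2\,\mathrm{d}t\le\tfrac12\|F(0)\|_{L^2}^2=\tfrac12$ gives $|\phi(\tau/\nu)-1|\le\tfrac1{\sqrt2}\|\nabla g_V\|_{L^2}\,\tau^{1/2}$, uniformly in $\nu$. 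Hence for $\tau$ small $\EE\|f^\nu(\tau/\nu)\|_{L^2}^2\ge\phi(\tau/\nu)\ge\tfrac12$ for every $\nu$, so $\Sigma$ is not dissipation enhancing.

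For the converse I would argue by contraposition --- this is the ``stochastic RAGE'' mechanism. Assume there are $\tau,\delta>0$, $\nu_n\downarrow0$, $\|f_n\|_{L^2}=1$ with $\EE\|f^{\nu_n}(\tau/\nu_n)\|_{L^2}^2>\delta$. A pathwise Itô computation gives $\|f^\nu(t)\|_{L^2}^2=\|f_0\|_{L^2}^2-2\nu\int_0^t\|\nabla f^\nu\|_{L^2}^2\,\mathrm{d}s$ (the martingale contributions cancel), so $\|f^\nu(t)\|_{L^2}\le\|f_0\|_{L^2}$ a.s.; and the energy identity $\tfrac12\ddt\|F_n\|_{L^2}^2=-\nu_n\|\nabla F_n\|_{L^2}^2-\tfrac12\sum_k\|A_kF_n\|_{L^2}^2$ bounds $\int_0^{\tau/\nu_n}\sum_k\|A_kF_n\|_{L^2}^2\,\mathrm{d}t$ and $\nu_n\int_0^{\tau/\nu_n}\EE\|\nabla f^{\nu_n}\|_{L^2}^2\,\mathrm{d}t$ by absolute constants. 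Averaging over $[0,\tau/\nu_n]$ selects $t_n\le\tau/\nu_n$ with $\sum_k\|A_kF_n(t_n)\|_{L^2}^2\to0$, $\sup_n\EE\|\nabla f^{\nu_n}(t_n)\|_{L^2}^2<\infty$ --- so the $F_n(t_n)$ are bounded in $H^1(\TT^{2d})$ --- and, by monotonicity, $\EE\|f^{\nu_n}(t_n)\|_{L^2}^2\ge\delta$. The uniform $H^1$-bound also makes $\{\mathrm{Law}(f^{\nu_n}(t_n))\}_n$ tight on $L^2(\TT^d)$; I would pass to a weak limit $\mu$, set $G:=\int f\otimes f\,\mathrm{d}\mu(f)$, and note that $G$ is then the weak-$L^2(\TT^{2d})$ limit of $F_n(t_n)$, hence $G\in H^1(\TT^{2d})$ (so $G$ maps $L^2$ into $H^1(\TT^d)$, with range in $H$). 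One gets $A_kG=0$ for all $k$ from $\|A_kF_n(t_n)\|_{L^2}\to0$, and $\operatorname{Tr}G=\int\|f\|_{L^2}^2\,\mathrm{d}\mu=\lim_n\EE\|f^{\nu_n}(t_n)\|_{L^2}^2\ge\delta$, so $G\neq0$. Thus $G$ is a non-zero positive compact operator with range in $H\cap H^1(\TT^d)$ commuting with every $\sigma_k\cdot\nabla$, and any eigenspace of $G$ for a non-zero eigenvalue is finite dimensional, contained in $H\cap H^1(\TT^d)$, and invariant under all $\sigma_k\cdot\nabla$: the required subspace.

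I expect the genuine obstacle to be producing a \emph{non-zero} limit $G$. Since $\EE\|f^\nu(t)\|_{L^2}^2=\operatorname{Tr}F(t)$ and the trace is not continuous along Hilbert--Schmidt-convergent sequences, a naive $L^2(\TT^{2d})$-limit of $F_n(t_n)$ could lose all its mass to high frequencies; this is why one passes to the weak limit of the \emph{laws} of $f^{\nu_n}(t_n)$, not of the operators, and combines the a priori $H^1$-bound with lower semicontinuity of the trace and the pathwise bound $\|f^\nu(t)\|_{L^2}\le\|f_0\|_{L^2}$ (which also keeps the laws supported in a fixed ball). The remaining points --- well-posedness of the two-point equation, domains of the unbounded operators $\sigma_k\cdot\nabla$, invariance of an eigenspace under an operator commuting with a compact self-adjoint one, and the selection of good times $t_n$ --- are routine.
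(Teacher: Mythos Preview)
Your argument is correct, and the route you take for the hard implication is genuinely different from the paper's.

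Both proofs share the same backbone: pass to the two-point motion, identify $\Ker\mathcal{L}_2\cap\sym$ with reproducing kernels of finite-dimensional subspaces invariant under all the $\sigma_k\cdot\nabla$ (your observations (i)--(ii) are the paper's Lemma~\ref{lem:KerL}), and read off the invariant subspace from any nonzero element of that kernel. For the easy direction (invariant $V$ $\Rightarrow$ no enhancement), your estimate via $\phi(t)=\langle g_V,F(t)\rangle$ and the energy bound on $F$ is close in spirit to the paper's ODE computation \eqref{eq:lower bound computation}--\eqref{eq:lower bound v}; yours stays entirely on the two-point side, theirs works on the one-point side, but both amount to the same $H^1$-energy obstruction.

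Where you diverge is the hard direction. The paper proves ``no invariant $V$ $\Rightarrow$ enhancement'' constructively, in the style of Constantin--Kiselev--Ryzhik--Zlato\v{s}: it first establishes a stochastic RAGE theorem for the \emph{inviscid} flow (Theorem~\ref{thm:sRAGE}), turns this into a uniform $H^1$-growth lemma (Lemma~\ref{pure point growth}), and then iterates over time blocks of length $\tau_1$, comparing viscous to inviscid solutions at each step to force exponential decay of $\EE\|f^\nu\|^2$. Your argument is the contrapositive and never touches the inviscid problem: from a failing sequence $(\nu_n,f_n)$ you run the \emph{viscous} two-point equation, use its energy identity to select times $t_n$ at which $F_n(t_n)$ is simultaneously close to $\Ker\mathcal{L}_2$ and $H^1$-bounded, and then pass to the weak limit of laws to produce a nonzero $G\in\Ker\mathcal{L}_2\cap H^1(\TT^{2d})$. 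The eigenspace trick then delivers $V$. This is shorter and more conceptual; you correctly anticipate the one delicate point (trace is not $L^2(\TT^{2d})$-continuous, so one must work with the laws of $f^{\nu_n}(t_n)$ rather than naively with $F_n(t_n)$), and your use of the pathwise bound $\|f^\nu(t)\|\le\|f_0\|$ to confine all laws to the unit ball is exactly what makes $f\mapsto\|f\|^2$ and $f\mapsto\langle\psi,f\rangle^2$ legitimate test functions. What the paper's longer route buys is that the RAGE theorem is proved as a standalone result of independent interest (it is reused, e.g., for the classification of invariant measures in Corollary~\ref{invariant measures}), and the iterative scheme is in principle closer to being made quantitative.
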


\begin{remark}
Despite the fact that our condition for enhanced dissipation seems rather abstract on the surface, it is actually checkable in certain 
situations of interest. Indeed, any such finite dimensional $V\subset H\cap H^1(\mathbb{T}^d)$ must be the finite span of 
eigenfunctions of the $\sigma_k \cdot \nabla$. This can easily be seen by noting that $\sigma_k \cdot \nabla $ restricted to $V$ is an 
anti-self adjoint matrix, and therefore admits a basis of eigenvectors on $V$. Therefore, checking the condition in 
Theorem~\ref{main result} essentially boils down to computing the $H^1(\mathbb{T}^d)$ eigenvectors of the $\sigma_k \cdot \nabla $, 
and seeing if any of their finite spans overlap.
\end{remark}


Theorem~\ref{main result} is a consequence of a more general result, which gives a necessary and sufficient condition for enhanced dissipation in an abstract setting for a general SPDE, of which \eqref{eq:StochAdvDiff} is a particular instance. Fix a 
Hilbert space \(H\) and consider a collection of anti self-adjoint operators \(\{iL_k\}_{k\in \NN}\) and a positive, unbounded operator \(A\) with discrete spectrum tending to infinity. 
Obtaining a criterion for enhanced dissipation for abstract SPDEs
of the form
\begin{equation}\label{eq:abstrdiff}
\dd f^\nu+\sum_{k=1}^\infty i L_k f^\nu\circ \dd W^k_t+\nu Af^\nu \dd t=0
\end{equation}
necessitates the study of the corresponding \textit{inviscid} problem, given by 
\begin{equation}\label{eq:InviscidAbstract}
\dd f+\sum_{k=1}^\infty  iL_kf \circ \dd W^k_t=0\:.
\end{equation}
In the particular case of transport noise, as in \eqref{eq:StochAdvDiff} for $\nu=0$, equations of this form are intrinsically amenable to analysis, since one may associate to them a stochastic flow of characteristics (see \cite{Kunita_1997}), given by 
\begin{equation}
\phi_{s,t}(x)=x+\sum_{k=1}^\infty\int_{s}^t\sigma_k(\phi_{s,z})\circ \dd W^k_z\:,
\end{equation}
and then solutions are simply given by composition of $f_0$ with the inverse flow map $\phi_{0,t}^{-1}$. 

In the general case we assume that for every $\nu \ge 0$ there exists a bounded random solution map \(\Phi^{\nu}_{s,t}:H \to H\) so that for any \(f_0 \in L^2(\Omega, \cF_s,\PP,H)\), the process \(t \mapsto\Phi^{\nu}_{s,t}f_0\) is a weak solution to \eqref{eq:abstrdiff} with initial data \(f_0\) assigned at time $s\in \RR$. In the case $\nu =0$ we drop the superscript 
and simply write $\Phi_{s,t}$. Under mild regularity assumptions on this map, we can then obtain the following stochastic version of the classical RAGE theorem (see \cite{Reed_Simon_1972}).
\begin{theorem}\label{thm:sRAGE}
Let \(H_\inv\) be the smallest closed subspace of \(H\) containing all finite dimensional subspaces that are invariant under the actions of the \(iL_k\), and denote by \(P_c\) the orthogonal projection onto \(H_\inv^\perp\). Then, \(P_c\) commutes with \(\Phi_{s,t}\) and, for any $f_0 \in H$ and any compact operator \(K: H \to H\), it holds that
\begin{equation}
\lim_{t \to \infty}\EE\|K P_c \Phi_{s,t}f_0\|^2 = 0\:,  
\end{equation} 
for every $s\in \RR_+$.
\end{theorem}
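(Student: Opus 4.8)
The plan is to pass to the \emph{two-point} evolution and exploit its invariant structure. For definiteness take $s=0$ (the general case is identical, by stationarity of the increments of the $W^k$). For $f_0\in H$ introduce the non-negative, self-adjoint, trace-class operator $\rho_t:=\EE\big[(\Phi_{0,t}f_0)\otimes(\Phi_{0,t}f_0)\big]$ on $H$, equivalently $\rho_t=T_t(f_0\otimes f_0)$ (so $\rho_0=f_0\otimes f_0$), where $T_t(\rho):=\EE[\Phi_{0,t}\,\rho\,\Phi_{0,t}^*]$ is the two-point semigroup. Since $\Phi_{0,t}$ is, for a.e.\ realisation, an isometry of $H$ — which follows from the anti-self-adjointness of the $iL_k$ via the Stratonovich chain rule (in the transport case this is \eqref{eq:diffener} at $\nu=0$) — the map $T_t$ preserves positivity and trace and contracts the Hilbert--Schmidt norm. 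Using only that $P_c$ is self-adjoint one has $\EE\|K P_c\Phi_{0,t}f_0\|^2=\tr\big[P_cK^*KP_c\,\rho_t\big]$; writing the compact non-negative operator $P_cK^*KP_c=\sum_j\mu_j\,e_j\otimes e_j$ with $\mu_j\downarrow0$ and orthonormal $e_j\in\mathrm{ran}\,P_c=H_\inv^\perp$, this equals $\sum_j\mu_j\langle e_j,\rho_t e_j\rangle$, whose tail $\sum_{j>N}\mu_j\langle e_j,\rho_t e_j\rangle\le(\sup_{j>N}\mu_j)\|f_0\|^2$ is small uniformly in $t$. Hence everything reduces to proving $\langle h,\rho_t h\rangle\to0$ as $t\to\infty$ for each fixed $h\in H_\inv^\perp$.

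Next I would identify $\lim_{t\to\infty}\rho_t$. Itô's formula shows that the generator of $T_t$ is (the self-adjoint realisation of) $\mathcal G=\frac12\sum_k\hat B_k^2$ on the Hilbert tensor product $H\otimes H$, where $\hat B_k:=(iL_k)\otimes\Id+\Id\otimes(iL_k)$ is anti-self-adjoint; thus $\mathcal G\le0$ and the spectral theorem yields $\rho_t=e^{t\mathcal G}\rho_0\to\rho_\infty:=P_{\Ker\mathcal G}\rho_0$ in $H\otimes H$, in particular in the weak operator topology, so $\langle h,\rho_t h\rangle\to\langle h,\rho_\infty h\rangle$. The key object is $\rho_\infty$: it is non-negative (positivity of $T_t$ survives the limit), trace-class with $\tr\rho_\infty\le\|f_0\|^2$ (Fatou applied to $\sum_n\langle e_n,\rho_t e_n\rangle$), and it lies in $\Ker\mathcal G=\bigcap_k\Ker\hat B_k$, i.e.\ $\hat B_k\rho_\infty=0$ for every $k$. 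Unwinding the tensor notation, the last identity says exactly that $\rho_\infty$ commutes with each unitary group $e^{\tau(iL_k)}$, $\tau\in\RR$.

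Now the structural dichotomy enters. Being compact, non-negative, and commuting with all the $e^{\tau(iL_k)}$, the operator $\rho_\infty$ decomposes as $\rho_\infty=\sum_n\lambda_n P_n$ with $\lambda_n>0$ and $P_n$ the orthogonal projection onto the finite-dimensional eigenspace $E_n$; each $E_n$ is invariant under every $e^{\tau(iL_k)}$, hence, being finite-dimensional, under every $iL_k$, so $E_n\subset H_\inv$ by the very definition of $H_\inv$. Therefore $\overline{\mathrm{ran}\,\rho_\infty}=\overline{\bigoplus_n E_n}\subset H_\inv$, i.e.\ $P_c\rho_\infty=0$, and in particular $\langle h,\rho_\infty h\rangle=0$ for every $h\in H_\inv^\perp$; together with the reduction of the first paragraph this gives $\lim_{t\to\infty}\EE\|K P_c\Phi_{0,t}f_0\|^2=0$. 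The commutation $P_c\Phi_{s,t}=\Phi_{s,t}P_c$ is separate and elementary: any finite-dimensional $V$ with $(iL_k)V\subset V$ for all $k$ is preserved by $\Phi_{s,t}$, because on $V$ the inviscid equation \eqref{eq:InviscidAbstract} becomes a finite-dimensional linear Stratonovich system; hence $\Phi_{s,t}H_\inv\subset H_\inv$, and the isometry property forces $\Phi_{s,t}H_\inv^\perp\subset H_\inv^\perp$ as well.

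The main obstacle is the second paragraph: making rigorous, under only the stated ``mild regularity'' on $\Phi$, the identification of $T_t$ with the semigroup generated by the self-adjoint operator $\mathcal G$ — this involves the operator domains, the meaning of the infinite form sum $\sum_k\hat B_k^2$, and the validity of the Itô computation for the two-point correlation. (Should that identification prove delicate, a slightly weaker route gives subsequential convergence: from $\frac{d}{dt}\|\rho_t\|_{\mathrm{HS}}^2=-\sum_k\|\hat B_k\rho_t\|^2$ one gets $\int_0^\infty\sum_k\|\hat B_k\rho_t\|^2\,dt<\infty$, so every weak Hilbert--Schmidt limit point of $\rho_t$ lies in $\bigcap_k\Ker\hat B_k$ and the structural argument applies to it; one then still has to upgrade this to a genuine limit.) A secondary technical point is the passage from the operator identity $\hat B_k\rho_\infty=0$, which a priori involves the possibly unbounded $iL_k$, to the finite-dimensional $iL_k$-invariance of the eigenspaces of $\rho_\infty$.
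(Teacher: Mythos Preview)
Your proposal is correct and follows essentially the same route as the paper: pass to the two-point semigroup, identify its generator as a non-positive self-adjoint operator on the Hilbert--Schmidt space, use the spectral theorem to project onto its kernel in the long-time limit, and then characterise kernel elements (viewed as compact self-adjoint operators on $H$) by showing that all their nonzero eigenspaces are finite-dimensional and $iL_k$-invariant, hence contained in $H_\inv$. The technical obstacle you correctly flag---essential self-adjointness of $\mathcal G$ and rigour of the two-point It\^o computation---is exactly what the paper isolates as Lemma~\ref{Computation is rigorous} and proves in Appendix~\ref{app:techno} under Assumption~\ref{Assumption1}; your commutativity argument (the SDE preserves each finite-dimensional invariant $V$, hence $H_\inv$, hence $H_\inv^\perp$ by isometry) is a slight variant of the paper's, which instead shows algebraically that the projection onto any closed invariant subspace commutes with each $L_k$.
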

\begin{remark}
Theorem~\ref{thm:sRAGE} is sharp, in the sense that if \(f_0\) is such that \((I-P_c)f_0 \neq 0\), then there exists a compact operator \(K:H \to H\) so that \(\displaystyle\liminf_{ t \to \infty} \EE\|K\Phi_{s,t}f_0\|^2 >0\).
\end{remark}
\begin{remark}
In the case \(L_k=0\) for every $k\geq 2$, the space \(H_{\inv}\) corresponds precisely to the space \(H_{\pp}\) of vectors for which the corresponding spectral measure is pure point, and so \(P_c\) is the orthogonal projection onto the continuous spectral subspace. In this simple case, the flow map admits the  representation \(\Phi_{s,t}=\e^{iL(W_t-W_s)}\), and so one can argue in much the same way as in the proof of the classical RAGE theorem, obtaining Theorem \ref{thm:sRAGE} by Wiener's theorem for measures (see e.g. \cite[Theorem XI.114]{Reed_Simon_1972}).
\end{remark}
\begin{remark}
Theorem~\ref{thm:sRAGE} allows us to deduce without much effort further properties about the properties of the stochastic transport equation. In particular, we deduce in Corollary \ref{invariant measures} that it admits nontrivial invariant measures on $H$ if and only if there do not exist any finite dimensional invariant subspaces of the $iL_k$.
\end{remark}

\subsection{Stochastic Shear Flows}
The general criterion established in Theorem~\ref{main result} is qualitative in nature and does not in general provide an explicit decay rate for the $L^2$ norm of solutions. As in the deterministic setting, shear flows provide an example of drifts in which explicit rates are obtainable. In close analogy with \cite{Bedrossian_CotiZelati_2017}, let $\{u_j(y)\}_{j\in \NN}$ be a family of smooth functions on $\TT$ and consider the SPDE on $\TT^2$ given by
\begin{equation}
\label{Eq:StochasticShear}
\dd f^\nu+\sqrt{2\kappa}\sum_{j=1}^\infty u_j(y)\partial_x f^\nu \circ \dd W^j_t=\nu \partial^2_y f^\nu \dd t, \qquad \kappa,\nu>0\:,
\end{equation}
with assigned initial condition \(f_0 \in L^2\). Denote by $n_0\in \NN_{\geq 0}$ the maximum order of \emph{spatially overlapping} critical points of the $u_j$, i.e.\ the smallest $n_0$ so that, for all $y \in \mathbb{T}$ there exists $n \leq n_0$ and a $j \in \NN$ so that $u_j^{n+1}(y) \neq 0$. Under some mild regularity assumptions, we then obtain the following result.

\begin{theorem}\label{thm:shearEn}
Let \(P_\ell\) be the projection onto the \(\ell^{\text{th}}\) Fourier mode in \(x\). Then, there exist deterministic constants $\eps_0, \alpha_0$, so that for any $\eps<\eps_0$, \(\sqrt{\frac{\kappa}{\nu}}|\ell|\geq \alpha_0\), there exists a random constant $C_{\eps,\ell,\nu,\kappa}$ so that
\begin{equation}\label{eq:enhancedShear}
\|P_\ell f^\nu(t)\|^2_{L^2_y} \leq C_{\eps,\ell,\nu,\kappa} \nu^{-\gamma}\kappa^\frac{\beta}{2}|\ell|^\beta\exp\Bigl({-\eps \nu^\frac{n_0+1}{n_0+2}\kappa^\frac{1}{n_0+2} |\ell|^\frac{2}{n_0+2} t}\Bigr)\|P_\ell f_0\|^2_{L^2_y}\:, \qquad \forall t\geq 0\:,
\end{equation}
for some universal, explicitly computable positive constants $\gamma, \beta >0$ depending only on $n_0$. 
For any $p<1+\eps^{-1}$, there exists a constant $c_{\eps,p}$ so that 
\begin{equation}
(\mathbb{E}(C_{\eps,\ell,\nu,\kappa})^p)^\frac{1}{p}\leq c_{p,\eps}\:,
\end{equation}
uniformly over $\ell,\nu,\kappa$.
Furthermore, assuming that $\sum_{j}\|u_j\|_{C^{2n_0+2}}<\infty$, there exists a sequence of initial data \(f_0^\alpha\), indexed by \(\alpha=\sqrt{\frac{\kappa}{\nu}}\ell\) so that the solution to \eqref{Eq:StochasticShear} with initial data \(f_0^\alpha\) satisfies 
the lower bound
\begin{equation}
\EE\|P_\ell f^\nu(t)\|_{L^2} \geq  \exp\Bigl({-\Tilde{c}\nu^\frac{n_0+1}{n_0+2}\kappa^\frac{1}{n_0+2}|\ell|^\frac{2}{n_0+2}t}\Bigr) \|P_\ell f_0^\alpha\|_{L^2}\:,
\end{equation}
for some constant $\Tilde{c}$ independent of $\alpha, \nu, \ell$.
\end{theorem}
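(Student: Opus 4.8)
The plan is to Fourier-decompose in $x$, pass to the second moment (a deterministic two-point function), recognize it as governed by a \emph{self-adjoint} Schr\"odinger operator, and finally upgrade the resulting $L^2$-decay to the pathwise statement. Writing $f^\nu(t,x,y)=\sum_{\ell\in\ZZ}\e^{i\ell x}g_\ell(t,y)$, the drift $u_j(y)\partial_x$ is diagonal in the $x$-frequency, so the modes $g_\ell$ decouple and $\|P_\ell f^\nu(t)\|_{L^2_y}^2=\|g_\ell(t)\|_{L^2_y}^2$. Converting \eqref{Eq:StochasticShear} to It\^o form produces the real, zeroth order correction $-\kappa\ell^2(\sum_j u_j^2)g_\ell$; since $\int_\TT u_j|g_\ell|^2\,\dd y$ is real, the martingale part of $\dd\|g_\ell\|_{L^2_y}^2$ vanishes and the correction cancels the It\^o bracket, leaving the \emph{pathwise} identity $\ddt\|g_\ell(t)\|_{L^2_y}^2=-2\nu\|\partial_y g_\ell(t)\|_{L^2_y}^2\le 0$ --- in particular $t\mapsto\|g_\ell(t)\|_{L^2_y}^2$ is non-increasing, which is used repeatedly. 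Setting $\rho_\ell(t,y,y'):=\EE[g_\ell(t,y)\overline{g_\ell(t,y')}]$, It\^o's formula and taking expectations give the closed, deterministic equation
\begin{equation*}
\partial_t\rho_\ell=\nu(\partial_y^2+\partial_{y'}^2)\rho_\ell-\kappa\ell^2\,b(y,y')\,\rho_\ell,\qquad b(y,y'):=\sum_{j}\big(u_j(y)-u_j(y')\big)^2\ge0,
\end{equation*}
on $\TT^2$, with $\EE\|g_\ell(t)\|_{L^2_y}^2=\int_\TT\rho_\ell(t,y,y)\,\dd y$. The decisive point is that $-\mathcal{L}_\ell:=-\nu\Delta+\kappa\ell^2 b$ is self-adjoint and non-negative on $L^2(\TT^2)$, so --- unlike the deterministic shear problem of \cite{Bedrossian_CotiZelati_2017} --- no Gearhart--Pr\"uss argument is needed: by the spectral theorem $\|\e^{t\mathcal{L}_\ell}\|_{L^2(\TT^2)\to L^2(\TT^2)}=\e^{-\lambda_\ell t}$, with $\lambda_\ell:=\inf\operatorname{spec}(-\nu\Delta+\kappa\ell^2 b)$ the ground state energy.

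Next I would estimate $\lambda_\ell$ and transfer the decay to the diagonal trace. From the spatial-overlap hypothesis one extracts a quantitative lower bound $b(y,y')\gtrsim d(y,y')^{2(n_0+1)}$ near the diagonal (with $d$ the distance on $\TT$; the exponent $2(n_0+1)$ is exactly where $n_0$ enters, and using several $u_j$ is what rules out cancellations at odd-order critical points), together with a uniform positive lower bound away from it. Inserting this into the quadratic form and rescaling the transverse variable by $(\nu/(\kappa\ell^2))^{1/(2n_0+4)}$ --- admissible precisely when this length is $\lesssim1$, i.e.\ when $\sqrt{\kappa/\nu}\,|\ell|\ge\alpha_0$ --- reduces matters to the positivity of the bottom eigenvalue of $-\partial_r^2+r^{2(n_0+1)}$ on $\RR$, giving $\lambda_\ell\asymp\nu^{\frac{n_0+1}{n_0+2}}\kappa^{\frac{1}{n_0+2}}|\ell|^{\frac{2}{n_0+2}}$ (the matching \emph{upper} bound, via trial functions localized in such a tube, is used in the lower-bound part). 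Since we need $\int_\TT\rho_\ell(t,y,y)\,\dd y$ and not $\|\rho_\ell(t)\|_{L^2(\TT^2)}$, I would sandwich the decay between two short parabolic-smoothing layers of length $\tau\sim\lambda_\ell^{-1}$: on $[0,\tau]$, $\|\rho_\ell(\tau)\|_{L^2(\TT^2)}\lesssim(\nu\tau)^{-1/2}\|g_{\ell,0}\|_{L^2_y}^2$ by Feynman--Kac domination by the heat semigroup (as $b\ge0$) together with $\|g_{\ell,0}\otimes\bar g_{\ell,0}\|_{L^1(\TT^2)}\lesssim\|g_{\ell,0}\|_{L^2_y}^2$; on $[t-\tau,t]$, $\int_\TT\rho_\ell(t,y,y)\,\dd y=\langle\e^{\tau\mathcal{L}_\ell}\delta_{\mathrm{diag}},\rho_\ell(t-\tau)\rangle\lesssim(\nu\tau)^{-1/4}\|\rho_\ell(t-\tau)\|_{L^2(\TT^2)}$; and $\|\e^{(t-2\tau)\mathcal{L}_\ell}\|\le\e^{-\lambda_\ell(t-2\tau)}$ in between. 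Optimizing $\tau$ yields $\EE\|g_\ell(t)\|_{L^2_y}^2\le C(\kappa\ell^2/\nu)^{\frac{3}{4(n_0+2)}}\e^{-\lambda_\ell t}\|g_{\ell,0}\|_{L^2_y}^2$, with $\lambda_\ell$ the sharp rate; the prefactor here is the deterministic $\nu^{-\gamma}\kappa^{\beta/2}|\ell|^\beta$ of \eqref{eq:enhancedShear}.

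For the pathwise bound, set $C_{\eps,\ell,\nu,\kappa}:=\sup_{t\ge0}\e^{\eps\lambda_\ell t}\|g_\ell(t)\|_{L^2_y}^2\big/\big(\nu^{-\gamma}\kappa^{\beta/2}|\ell|^\beta\|g_{\ell,0}\|_{L^2_y}^2\big)$. Rescaling time by $\lambda_\ell$ and using monotonicity of $\|g_\ell\|_{L^2_y}^2$, the supremum over each unit interval is controlled by its left-endpoint value; a union bound over integers, combined with the a priori estimate applied after each integer time (Markov property, and $\EE\|g_\ell\|^{2p}\le\|g_{\ell,0}\|^{2(p-1)}\EE\|g_\ell\|^2$ for $p\ge1$ by monotonicity), gives a tail bound $\PP(C_{\eps,\ell,\nu,\kappa}>\lambda)\lesssim\lambda^{-c(\eps)}$ \emph{uniformly} in $\ell,\nu,\kappa$ --- the normalizing prefactor $\nu^{-\gamma}\kappa^{\beta/2}|\ell|^\beta$ is exactly what absorbs the (unbounded) constant above, and $\eps<\eps_0$ is what makes the geometric series converge --- whence, after a careful accounting, $(\EE(C_{\eps,\ell,\nu,\kappa})^p)^{1/p}\le c_{p,\eps}$ for $p<1+\eps^{-1}$. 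For the lower bound I would take $f_0^\alpha$ localized at the bottleneck scale $(\nu/(\kappa\ell^2))^{1/(2n_0+4)}$ near a point realizing the order-$n_0$ overlap, so that $g_{\ell,0}\otimes\bar g_{\ell,0}$ has non-degenerate overlap with the positive ground state $\psi_\ell$ of $-\nu\Delta+\kappa\ell^2 b$ (this overlap equals $\langle g_{\ell,0},\widehat{\psi_\ell}g_{\ell,0}\rangle$, and $\tr\widehat{\psi_\ell}=\int_\TT\psi_\ell(y,y)\,\dd y>0$ forces a positive eigenvalue). Since $\widehat{\rho_\ell(t)}\ge0$ and $\|\widehat{\psi_\ell}\|_{\mathrm{op}}\le\|\psi_\ell\|_{L^2(\TT^2)}$,
\begin{equation*}
\EE\|g_\ell(t)\|_{L^2_y}^2=\tr\widehat{\rho_\ell(t)}\ \ge\ \big|\langle\psi_\ell,\e^{t\mathcal{L}_\ell}(g_{\ell,0}\otimes\bar g_{\ell,0})\rangle\big|\ =\ \e^{-\lambda_\ell t}\,\langle g_{\ell,0},\widehat{\psi_\ell}g_{\ell,0}\rangle,
\end{equation*}
which with the upper bound on $\lambda_\ell$ gives the desired lower bound (for small $t$ one uses in addition that the chosen datum decays no faster than $\sim\lambda_\ell$ initially); the passage to $\EE\|g_\ell(t)\|_{L^2_y}$ is the reverse H\"older inequality $\EE X\ge(\EE X^2)^2/\EE X^3$ together with $\EE X^3\le\|g_{\ell,0}\|_{L^2_y}\,\EE X^2$, $X=\|g_\ell(t)\|_{L^2_y}$. (The regularity assumptions, in particular $\sum_j\|u_j\|_{C^{2n_0+2}}<\infty$, enter in the well-posedness of \eqref{Eq:StochasticShear} and in pinning down $\psi_\ell$.)

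The conceptual work is the two-point function reduction: once one has the closed self-adjoint equation for $\rho_\ell$, ``enhanced dissipation'' becomes a classical ground state estimate, and self-adjointness removes the usual non-normal headaches. The genuine difficulties are then (i) extracting the sharp exponent $2(n_0+1)$ in the lower bound for $b$ from the spatial-overlap condition, and (ii) the bookkeeping of the third paragraph --- quantifying how the loss parameter $\eps$ controls the admissible moment exponent while keeping the random constant bounded uniformly in $\ell,\nu,\kappa$. The ground state and trial-function analysis themselves are in close analogy with the deterministic theory.
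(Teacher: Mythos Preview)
Your architecture matches the paper's: Fourier-decompose in $x$, pass to the two-point function $\rho_\ell$, recognize the generator as the self-adjoint Schr\"odinger operator $-\nu\Delta+\kappa\ell^2 b$ on $\TT^2$, and reduce enhanced dissipation to a semiclassical ground-state estimate. The genuine gap is in that estimate. You assert that $b(y,y')=\sum_j(u_j(y)-u_j(y'))^2$ satisfies $b\gtrsim d(y,y')^{2(n_0+1)}$ near the diagonal ``together with a uniform positive lower bound away from it''. Both parts fail in general. The zero set $\{b=0\}=\{(y,y'):u_j(y)=u_j(y')\ \forall j\}$ is typically strictly larger than the diagonal --- already for a single $u(y)=\sin y$ one picks up the entire curve $y'=\pi-y$ --- so there is no uniform positivity off the diagonal. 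And at an overlapping critical point $y_0$ of order $n_0$, the local model for $b$ near $(y_0,y_0)$ is $((y-y_0)^{n_0+1}\pm(y'-y_0)^{n_0+1})^2$, which vanishes along one or two \emph{curves} through $(y_0,y_0)$ and is not bounded below by $d(y,y')^{2(n_0+1)}$; your 1D transverse rescaling to $-\partial_r^2+r^{2(n_0+1)}$ does not apply there. The paper's Theorem~\ref{SchrodingerScaling} handles this by IMS localization: a structure lemma (Lemma~\ref{lemma:structure of zero set}) shows every zero of $b$ is either on a smooth curve with non-degenerate transverse Hessian, or a point where $b\gtrsim((x-x_0)^{n_1}\pm(y-y_0)^{n_2})^2$ with $n_1,n_2\le n_0+1$, or an isolated point with $b\gtrsim|x-x_0|^{2n_0+2}+|y-y_0|^{2n_0+2}$; each local model is analysed separately (Lemmas~\ref{lemma:schrodingerScaling1}--\ref{lemma:Schrodinger scaling 2}), and the arithmetic inequality $\tfrac{2n}{2n+mn-m}\ge\tfrac{2}{n_0+2}$ for $m,n\le n_0+1$ is what guarantees every piece contributes at least $c\lambda^{2/(n_0+2)}$.

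The remaining steps are legitimate alternatives. For the diagonal trace the paper uses the Sobolev trace theorem together with $\nu\|\cdot\|_{\dot H^1}^2\le\langle\cdot,\nu L^\lambda\cdot\rangle$ rather than your Feynman--Kac sandwich. For the pathwise upgrade the paper runs a Borel--Cantelli argument in the style of \cite{Bedrossian_Blumenthal_Punshon-Smith_2022,Gess_Yaroslavtsev}: it solves the mode-$\ell$ SPDE from each Fourier basis element $e_k(y)$ in $y$, controls the random times $N_k=\max\{n:\|e_k(n)\|\ge\e^{-\zeta n}\}$, and then uses a mollification trick to descend from $H^{5/2}$ to $L^2$ data --- this produces a single random constant valid for \emph{all} $f_0$, whereas your $\sup_t$ construction as written is for a fixed initial datum. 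For the lower bound the paper avoids the two-point ground state entirely: the shifted process $v(t,x,y)=f(x+\sqrt{2\kappa}\sum_ju_j(0)W^j_t,y,t)$ has $|P_\ell v|=|P_\ell f|$, while $\EE\hat v_\ell$ solves the \emph{one}-dimensional equation $\partial_tg=\nu\partial_y^2 g-\kappa\ell^2\sum_j(u_j(y)-u_j(0))^2 g$ on $\TT$, and a quasimode for this operator is built by truncating the ground state of $-\partial_y^2+cy^{2n_0+2}$ on $\RR$; the final passage from $\|\EE\hat v_\ell\|_{L^2}$ to $\EE\|\hat v_\ell\|_{L^2}$ is just Jensen's inequality, not a reverse H\"older argument.
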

\begin{remark}
The second part of Theorem~\ref{thm:shearEn} shows that the rates we obtain are sharp, up to possibly the factors in front of the exponential in \eqref{eq:enhancedShear}.
\end{remark}
\begin{remark}\label{rem:hypo}
Due to the fact that the above result only depends on the size of \(\alpha=\sqrt{\frac{\kappa}{\nu}}|\ell|\), this is quite meaningful even in the case of relatively large \(\nu\). Indeed, taking into account the parabolic regularisation in $y$, our result gives Gevrey-$p$ regularity of any parameter \(p >\frac{n_0+2}{2}\). Whilst some hypoelliptic regularisation is expected since the equation satisfies a stochastic analogue of H\"ormander's bracket condition (see e.g.\ \cite{Kunita_1994}), this gives us an exact quantification on the extent of this regularisation.
\end{remark}
\begin{remark}
The analogous question of determining rates of enhanced dissipation for deterministic shear flows of the form 
\begin{equation}\label{eq:detshear}
\partial_t f+u(y)\partial_x f =\nu \Delta f
\end{equation}
was tackled in \cite{Bedrossian_CotiZelati_2017}, and later in \cite{ABN22,CZG23,Villringer24,GLM24}. 
Whilst existing proofs use hypocoercivity \cite{Bedrossian_CotiZelati_2017}, a quantitative H\"ormander-type approach \cite{ABN22}, pseudo-spectral estimates \cite{CZG23}, Malliavin calculus \cite{Villringer24} or Lagrangian particles \cite{GLM24}, our methods are quite different. 
For \eqref{eq:detshear}, the determining factor for the rates at which dissipation is enhanced is the maximal order $n_0\in \NN$ of vanishing of $u'$ at critical points, and the exponential decay rate is proportional to \(\nu^\frac{n_0+1}{n_0+3}|\ell|^\frac{2}{n_0+3}\). Note that in the stochastic case, if there exists only a single $u_j$ which has derivative that vanishes up to order $n_0$, our results yield rates proportional to $\nu^\frac{n_0+1}{n_0+2}|\ell|^\frac{2}{n_0+2}$. In this sense, the enhanced dissipation for stochastic shear flows seems to be slower than in the deterministic case, whereas the hypoelliptic regularisation effect is actually stronger. Note however that in contrast to the deterministic case, $n_0$ is permitted to take the value $0$, (and it turns out this is in some sense the generic case).
\end{remark}

\subsection{Outline of the Article and Main Ideas}\label{sub:mainideas}
Most of the work to prove Theorem~\ref{main result} will lie in the consideration of the inviscid problem and the corresponding RAGE theorem, Theorem~\ref{thm:sRAGE}. 
The core observation is that one can obtain information on expressions of the form 
\begin{equation}
\EE|\langle \psi,\Phi_{s,t}f_0\rangle|^2\:, \qquad \psi, f_0\in H\:,
\end{equation}
by re-writing the above expression in terms of the corresponding \emph{two-point motion} 
\begin{equation}
\EE|\langle \psi,\Phi_{s,t}f_0\rangle|^2=\EE\langle \psi\otimes \overline{\psi},\Phi_{s,t}f_0\otimes \overline{\Phi_{s,t}f_0}\rangle_{H\otimes H}\:.
\end{equation}
Pulling the expectation into the inner product, we thus aim to derive an equation satisfied by 
\begin{equation}
g(t):=\EE[\Phi_{s,t}f_0\otimes \overline{\Phi_{s,t}f_0}]\:.
\end{equation}
Proceeding in a purely formal way, we see that \(\overline{\Phi_{s,t}f_0}=\overline{f(t)}\) satisfies
\begin{equation}
\dd\overline{f(t)}-\sum_{k=1}^\infty i\overline{L_k f(t)}\circ \dd W^k_t=0\:,
\end{equation}
and hence, applying Itô's formula for tensor products (see Appendix~\ref{Ito formula}), we deduce that 
\begin{equation}
\dd(f(t)\otimes \overline{f(t)})=\dd M_t+\frac{1}{2}\sum_{k=1}^\infty(iL_k\otimes \Id-i \Id\otimes \overline{L_k})^2(f(t)\otimes \overline{f(t)})\:,
\end{equation}
where \(M_t\) is a martingale.
Thus, taking expectation, it follows that $g$ satisfies
\begin{equation}
\partial_t g =Lg\;,
\end{equation}
where 
\begin{equation}\label{eq:Loperator}
L=\frac{1}{2}\sum_k(iL_k\otimes \Id-i\Id\otimes \overline{L_k})^2
\end{equation}
is a symmetric, negative operator on $H\otimes H$. Continuing with the theme of brushing technical considerations under the rug and treating $L$ as if it was actually self-adjoint for now, the functional calculus implies that
\begin{equation}
\lim_{t\to \infty} \EE|\l \psi,\Phi_{s,t}f_0\r|^2 =\lim_{t\to \infty} \l \psi\otimes \overline{\psi},\e^{L(t-s)}\EE(f_0 \otimes \overline{f_0})\r_{H\otimes H} =
\langle \psi\otimes \overline{\psi},P\EE(f_0 \otimes \overline{f_0})\rangle_{H\otimes H}\:,
\end{equation}
where \(P\) is the orthogonal projection onto the kernel of \(L\). Therefore, upon classifying said kernel, Theorem~\ref{thm:sRAGE} follows readily. This will be carried out in Section~\ref{sec:RAGEthm}.

The idea of proving mixing for stochastic flows via the two point motion has found great success in recent years, starting from the seminal works \cite{DKK04,ALS09} and, in the context of stochastic fluid mechanics, with \cite{Bedrossian_Blumenthal_Punshon-Smith_2022,Blumenthal_CotiZelati_Gvalani_2023,Gess_Yaroslavtsev}. In all these cases, the authors use Markov chain techniques to prove the existence of a spectral gap for the generator of the two point motion. From there, the above considerations can be used to deduce exponential mixing, even uniform with respect to the diffusivity parameter $\nu$, see \cite{BBSP21,CIS24}. We also mention the works \cite{Rowan_2024,Navarro-Fernández_Seis_2025}, where the authors derive a spectral gap for the generator of the two-point motion using PDE techniques. At the level of generality of our setting, we cannot expect a spectral gap for the generator of the two-point motion to always exist. However, in order to deduce the non-quantitative decay rates we are after, it will suffice to show that the kernel of the generator of the two point motion is trivial.

On the other hand, when considering stochastic shear flows in Section~\ref{sec:stochshear}, we will indeed exhibit a quantitative spectral gap in order to prove Theorem~\ref{thm:shearEn}. In particular, upon taking the Fourier transform in $x$, the evolution of the two point motion will be given by the action of a Schrödinger operator with non-negative potential \(V(x,y)\). The proof will then consist of estimating the size of the smallest eigenvalue of this operator in the semiclassical limit, which via the trace theorem followed by a Borel--Cantelli type argument will then yield the desired result.

\section{A Stochastic RAGE Theorem}\label{sec:RAGEthm}
In this section, we prove Theorem~\ref{thm:sRAGE} in the general setting briefly described in \eqref{eq:abstrdiff} and \eqref{eq:InviscidAbstract}. 
Let $\mathcal{H}$ be a separable real Hilbert space, endowed with norm
$\|\cdot\|$ and inner product $\l\cdot,\cdot\r$.
We write $H$ for its complexification \(\mathcal{H}\otimes \mathbb{C}\), and consider a family of self-adjoint unbounded operators 
$$
L_k:\cD(L_k)\to H, \qquad k\in \NN\:,
$$
such that $\bigcap_k \cD(L_k)$ is a common core for the $L_k$ in $H$, and so that the sequence $\{\|L_kx\|\}_{k \in \mathbb{N}} \in \ell^1$, for all $x \in \bigcap_k \cD(L_k)$.
We work on a filtered probability space $(\Omega,\mathcal{F},\{\mathcal{F}_t\}_t,\mathbb{P})$
supporting a countable number of i.i.d.\ standard Brownian motions $\{W^k_t\}_{k \in \mathbb{N}}$, which generate the filtration $\{\mathcal{F}_t\}_t$.
We consider stochastic partial differential equations on \(H\) of the form
\eqref{eq:InviscidAbstract}, for which our definition of strong solutions differs slightly from that found in the literature (see e.g. \cite{Prato_Zabczyk_2014}), but will turn out to be natural for the setting we are interested in.
\begin{definition}
An adapted process \(f\in L^2(\Omega;L^2_\loc(0,\infty;H))\)\footnote{In the sense that
$f \restr [0,T]\in L^2(\Omega;L^2(0,T;H))$ for every $T > 0$.} is called a strong solution to \eqref{eq:InviscidAbstract} if the following conditions hold:

\begin{itemize}
\item[--] for each $t\geq0$, $f(t)$ takes values in \(\bigcap_{k}\cD(L_k^2)\) almost surely;
\item[--] for every $T>0$, one has
\begin{equation}
\label{eq:boundSol}
\EE \int_0^T \sum_k (\|L_k f(t)\|^2+\| L_k^2 f(t)\|)\,\dd t < \infty\;,
\end{equation}
\item[--] for every $t>0$, the identity
\begin{equation}
f(t)=f(0)-\sum_{k}\int_{0}^tiL_kf(s)\dd W^k_s+\frac{1}{2}\sum_{k}\int_{0}^t(iL_k)^2f(s)\dd s\;
\end{equation}
holds almost surely.
\end{itemize}
\end{definition}
With this definition at hand, we then make the following assumption regarding the existence and 
uniqueness of solutions to  \eqref{eq:InviscidAbstract}:
\begin{enumerate} [label=(A\arabic*), ref=(A\arabic*)]
\item\label{Assumption1}
There exists a norm-dense subspace  \(\cC\) of \(H\) so that, for any initial condition \(f_0 \in \cC\), there exists a unique strong solution \(f(t)\) to \eqref{eq:InviscidAbstract} with initial datum \(f_0\). Therefore, this solution is given by a solution operator \(\Phi_{s,t}:\cC \to H\), which is a random isometry, i.e. for any \(s,t\), it holds that \(\|\Phi_{s,t}f_0\|=\|f_0\|\). By continuity, we extend this map to all of \(L^2(\Omega,\cF_s,\PP,H)\), and call \(\Phi_{s,t}f_0\) the solution to \eqref{eq:InviscidAbstract}.
\end{enumerate}

The point of this assumption is that it enables us to make the formal computations from 
Section~\ref{sub:mainideas} rigorous. In particular, under our well-posedness assumption, the two-point 
correlation $g(t)=\mathbb{E}(f(t) \otimes \overline{f(t)})$ is well defined for any $f_0 \in \mathcal{C}$. 
Furthermore, it satisfies $\partial_t g(t)=L g(t)$, for any self-adjoint extension of the symmetric operator $L$. 
We show in Lemma \ref{Computation is rigorous} that this is in fact enough to conclude that $L$ is 
essentially self-adjoint when given an appropriate domain. We further remark that our well-posedness assumptions 
will be satisfied by most of the operators we care about. In particular, it holds in the case where the 
$iL_k$ are of the form $\sigma_k \cdot \nabla$.

\subsection{The Two-Point Motion}
We define $\sym$ as the closed \textit{real} subspace of \(H \otimes H\) generated 
by real linear combinations of \(\{x\otimes \overline{x}:x \in H\}\). Elements 
of $\sym$ are
canonically identified with self-adjoint Hilbert--Schmidt operators on $H$ via the map $T$ (see Lemma \ref{Hilbert--Schmidt isomorphism})
\begin{equation}
T_{x\otimes \overline{x}}(y):=\langle y,x\rangle  x\;,
\end{equation}
so that 
\begin{equation}
\label{tensor product inner product}\langle T_{x\otimes \overline{x}}(y),z\rangle =\langle x \otimes \overline{x},z\otimes \overline{y} \rangle,  \quad \langle y,T_{x\otimes \overline{x}}(z)\rangle =\langle y,x\rangle \overline{\langle \overline{x},\overline{z}\rangle }=\langle x\otimes \overline{x},z\otimes \overline{y}\rangle\:.
\end{equation}
By linearity and continuity, the identities in \eqref{tensor product inner product} may be extended to any $\zeta \in \sym$.
We will see in Lemma~\ref{intersection of kernels} that the restriction of the operator $L$ to $\sym$ is 
essentially self-adjoint. Our strategy in classifying $\Ker(L)$ will be to view its elements as self-adjoint Hilbert--Schmidt operators. This allows us to apply the extremely rich spectral theory of compact self-adjoint operators to obtain a complete classification of $\Ker(L)$.

We begin by stating the following result, which essentially provides a rigorous justification of the formal computation we did in the previous section.
\begin{lemma}\label{Computation is rigorous}
Let \(\Phi_{s,t}\) satisfy~\ref{Assumption1}. Then the operator $L$, with domain $\mathcal{D}(L)= \Span\{\mathbb{E}(f\otimes \overline{f}):f \in \mathcal{C}\}$  is essentially self-adjoint on \(\sym\), negative, and has kernel given by 
\begin{equation}
\label{intersection of kernels}
\Ker(L)=\bigcap_{k=1}^\infty\Ker\big(L_k\otimes \Id-\Id\otimes \overline{L_k}\big)\:.
\end{equation}
Furthermore, identifying \(L\) with its closure, it holds that  \(g(t):=\EE(\Phi_{s,t}f_0\otimes \overline{\Phi_{s,t}f_0})\) satisfies
\begin{equation}
g(t)=\e^{Lt}\EE(f_0\otimes \overline{f_0})\:,
\end{equation}
for any \(f_0 \in L^2(\Omega,\cF_s,\PP,H)\).
\end{lemma}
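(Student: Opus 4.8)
The plan is to turn the formal computation of Section~\ref{sub:mainideas} into an honest statement about a $C_0$-semigroup on $\sym$ and to identify its generator in two ways — probabilistically, through the flow $\Phi$, and analytically, through a quadratic form. By time-homogeneity of \eqref{eq:InviscidAbstract} we may take $s=0$, so that the asserted identity reads $g(t)=\e^{tL}\EE(f_0\otimes\overline{f_0})$. The first step is to make the It\^o calculation rigorous: for $f_0\in\cC$ set $f(t)=\Phi_{0,t}f_0$; since $f$ is a strong solution, the tensor It\^o formula of Appendix~\ref{Ito formula} gives $\dd\bigl(f(t)\otimes\overline{f(t)}\bigr)=\dd M_t+L\bigl(f(t)\otimes\overline{f(t)}\bigr)\,\dd t$, where $M$ is a genuine martingale by \eqref{eq:boundSol} and the $\ell^1$ hypothesis on $\{\|L_kx\|\}$. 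Taking expectations and using Fubini, $g$ is $C^1$ in $t$ with $g'(t)=\EE\bigl[L(f(t)\otimes\overline{f(t)})\bigr]$, hence $g'(0)=L(f_0\otimes\overline{f_0})$ since $f_0$ is deterministic.

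Next I would introduce the two-point semigroup. Identifying $\sym$ with the self-adjoint Hilbert--Schmidt operators via $T$, set $Q_t\zeta:=\EE\bigl[\Phi_{0,t}\,\zeta\,\Phi_{0,t}^*\bigr]$. Each $Q_t$ is a contraction because $\Phi_{0,t}$ is a random isometry; the flow (cocycle) property of $\Phi$ together with independence of the Brownian increments makes $\{Q_t\}$ a semigroup; strong continuity follows from continuity of $t\mapsto\Phi_{0,t}f_0$ and density of $\cD(L)$ in $\sym$; and each $Q_t$ is self-adjoint because $\Phi_{0,t}$ is in fact unitary, with inverse $\Phi_{t,0}$, which has the same law as $\Phi_{0,t}$ by time-reversibility of Brownian motion. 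Thus $Q_t=\e^{t\mathcal L}$ for a unique negative self-adjoint operator $\mathcal L$. Since $g(t)=Q_t(f_0\otimes\overline{f_0})$, the previous paragraph shows $\zeta_0:=f_0\otimes\overline{f_0}\in\cD(\mathcal L)$ with $\mathcal L\zeta_0=g'(0)=L\zeta_0$; extending by linearity shows that $L$ is well defined on $\cD(L)$, symmetric, negative, and a restriction of $\mathcal L$.

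To pin down the kernel I would identify $\mathcal L$ with the operator $-\hat L$ associated to the closed nonnegative form $q(\zeta)=\tfrac12\sum_k\bigl\|(L_k\otimes\Id-\Id\otimes\overline{L_k})\zeta\bigr\|^2$ with form domain $\bigcap_k\cD(L_k\otimes\Id-\Id\otimes\overline{L_k})$. On the regular vectors $g(t)=\EE(\xi\otimes\overline\xi)$, $\xi=\Phi_{0,t}f_0$, which by \eqref{eq:boundSol} lie a.s.\ in $\bigcap_k\cD(L_k^2)$ with the necessary integrability, a direct computation (moving one factor across the inner product and pulling the expectation through the closed operators) gives $q(g(t),\eta)=\langle -g'(t),\eta\rangle$, so $g(t)\in\cD(\hat L)$ and $-\hat L g(t)=g'(t)=\mathcal L g(t)$. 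Since the span of the orbits $\{Q_t\zeta_0:t\ge0,\ \zeta_0\in\cD(L)\}$ is dense, $Q_t$-invariant and contained in $\cD(\mathcal L)$, it is a core for $\mathcal L$, and two self-adjoint operators agreeing on a core for one of them coincide, so $\mathcal L=-\hat L$. The kernel identity \eqref{intersection of kernels} is then immediate: for a negative self-adjoint operator, $\zeta\in\Ker\mathcal L$ iff $\langle\mathcal L\zeta,\zeta\rangle=-q(\zeta)=0$ iff $(L_k\otimes\Id-\Id\otimes\overline{L_k})\zeta=0$ for every $k$.

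It remains to upgrade $L\subset\mathcal L$ to $\overline L=\mathcal L$ (essential self-adjointness), after which $g(t)=\e^{tL}\EE(f_0\otimes\overline{f_0})$ holds for $f_0\in\cC$ by construction and for general $f_0\in L^2(\Omega,\cF_0,\PP,H)$ by first treating $\cF_0$-measurable $\cC$-valued simple functions (using independence of $\Phi_{0,t}$ from $\cF_0$) and passing to the limit, both sides being continuous in $f_0$. For $\overline L=\mathcal L$ I would show that each orbit $g(t)=Q_t\zeta_0$, $\zeta_0\in\cD(L)$, already lies in $\cD(\overline L)$ with $\overline L g(t)=g'(t)$: approximating the $\bigcap_k\cD(L_k^2)$-valued random vector $\xi=\Phi_{0,t}f_0$ by $\cC$-valued simple functions converging strongly enough, the associated tensor-square expectations lie in $\cD(L)$ and converge to $g(t)$ together with their $L$-images; since $\overline L$ is closed, is contained in $\mathcal L$, and agrees with the self-adjoint $\mathcal L$ on the core $\{Q_t\zeta_0\}$, we conclude $\overline L=\mathcal L$. \textbf{The main obstacle is precisely this last step}: controlling the domain of the closure $\overline L$ amounts to approximating the genuinely random, merely $\bigcap_k\cD(L_k^2)$-regular solution $\Phi_{0,t}f_0$ by members of the deterministic class $\cC$ in the topology that makes $L$ continuous — which is where the mild regularity assumptions on $\cC$ enter.
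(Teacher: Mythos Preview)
Your overall architecture — build the two-point semigroup $Q_t\zeta=\EE[\Phi_{0,t}\zeta\Phi_{0,t}^*]$, show it is a self-adjoint $C_0$-contraction semigroup, identify its generator $\mathcal L$ with the form operator $-\hat L$, then upgrade $L\subset\mathcal L$ to $\overline L=\mathcal L$ — is coherent but takes a substantially harder route than the paper, and the step you flag as the ``main obstacle'' is exactly the one the paper manages to avoid.

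The paper never constructs $Q_t$ or argues about its self-adjointness. Instead, having derived (via the tensor It\^o formula and Hille's theorem for Bochner integrals) that $g(t)=\EE(f(t)\otimes\overline{f(t)})$ satisfies $\partial_t g=\widetilde L g$ for \emph{every} negative self-adjoint extension $\widetilde L$ of $L$, it simply observes that the initial-value problem $\partial_t h=\widetilde L h$, $h(0)=g(0)$ has a unique solution $h(t)=\e^{\widetilde L t}g(0)$ (by the elementary energy argument $\partial_t\|h\|^2=2\langle\widetilde L h,h\rangle\le 0$). Hence $\e^{\widetilde L t}g(0)$ is the same for all extensions $\widetilde L$, on the dense set $\{\EE(f_0\otimes\overline{f_0}):f_0\in L^2(\Omega,\cF,\PP,H)\}$ (density from $\cC$ follows by the continuity estimate $\EE\|f_n\otimes\overline{f_n}-f\otimes\overline f\|\le 2\EE\|f_n-f\|\|f_n\|$). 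Two self-adjoint operators whose semigroups agree on a dense set coincide, so $L$ has a unique self-adjoint extension, i.e.\ is essentially self-adjoint. The kernel identity then comes for free: the unique extension is in particular the Friedrichs extension, whose kernel is the zero set of the form $q$, namely $\bigcap_k\Ker(L_k\otimes\Id-\Id\otimes\overline{L_k})$.

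Two further remarks on your route. First, your self-adjointness of $Q_t$ rests on $\Phi_{0,t}^{-1}$ having the same law as $\Phi_{0,t}$; but Assumption~\ref{Assumption1} only gives that $\Phi_{s,t}$ is a random \emph{isometry}, not a priori unitary, so this needs justification in the abstract setting. Second, even granting $\mathcal L=-\hat L$, the approximation you propose for $\overline L=\mathcal L$ — replacing the random $\bigcap_k\cD(L_k^2)$-valued $\Phi_{0,t}f_0$ by $\cC$-valued simple functions in the graph topology of $L$ — is genuinely delicate, and nothing in the standing hypotheses guarantees it. The paper's ``all extensions give the same semigroup'' trick is precisely designed to sidestep this approximation problem.
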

The proof of this result is the subject of the Appendix~\ref{app:techno}. 
\begin{remark}
The expression \eqref{intersection of kernels} for the kernel is more subtle than it
may appear. Whilst one certainly would expect the kernel of a sum of negative operators to be the intersection of their respective kernels, we emphasize that this property is \emph{not} stable under general extensions of unbounded operators. In particular, in view of the classical result of Krein (see e.g. \cite{Fucci_Gesztesy_Kirsten_Littlejohn_Nichols_Stanfil_2022} for a recent survey), 
any positive self-adjoint extension $\Tilde{L}$ of $-L$ satisfies 
\begin{equation}
L_K \leq \Tilde{L} \leq L_F
\end{equation}
where $L_K$ denotes the Krein--von Neumann extension, and $L_F$ the Friedrichs extension. While the Friedrichs extension enjoys property \eqref{intersection of kernels} for its kernels, the kernel of the Krein--von Neumann extension satisfies $\Ker(L_K)=\Ker(L^*)$ (see for example \cite{Fucci_Gesztesy_Kirsten_Littlejohn_Nichols_Stanfil_2022}), which may in principle be a lot larger. By imposing assumption \ref{Assumption1} and concluding that the operator $L$ is essentially self-adjoint, we entirely circumvent this technical problem.
\end{remark}

\begin{remark}The identification of tensors with Hilbert--Schmidt operators is chosen so that it aligns precisely with the natural identification of functions $f \in L^2(\mathbb{T}^{2d})$ with Hilbert--Schmidt integral operators $g \mapsto \int_{\mathbb{T}^d} f(x,y)g(y)\dd y=\langle f(x,\cdot),\overline{g}(\cdot)\rangle_{L^2(\mathbb{T}^d)}$. Therefore, when $H\subset L^2(\TT^d)$, the space $\sym\subset H\otimes H$ can also be identified with 
square integrable kernels satisfying  the symmetry condition \(h(x,y)=\overline{h(y,x)}\).
\end{remark}

\subsection{The Kernel of \TitleEquation LL}
With these preliminaries at hand, we may relate the kernel of $L$ to the spectrum of the operators $L_k$. In what follows, we denote by $\sigma_p(L_k)$ the point spectrum of \(L_k\). 
We have the following result.
\begin{lemma}\label{lem:SpecLk}
The kernel of \(L\) on \(\sym\) is given by
\begin{equation}\label{eq:KernelL}
  \Ker(L)= \bigcap_{k=1}^\infty \overline{\Span\left\{\zeta\otimes \overline{\zeta}:
L_k\zeta=\lambda \zeta , \lambda \in \sigma_p(L_k)\right\}}\:.
\end{equation}
\end{lemma}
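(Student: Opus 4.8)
The plan is to reduce \eqref{eq:KernelL} to a statement about a single operator and then to extract it from the spectral theory of compact self-adjoint operators. By the identity \eqref{intersection of kernels} proved in Lemma~\ref{Computation is rigorous}, we have $\Ker(L)=\bigcap_{k\in\NN}\Ker\bigl(L_k\otimes\Id-\Id\otimes\overline{L_k}\bigr)$ inside $\sym$, so it is enough to establish, for a single fixed $k$ (write $M:=L_k$), the identity
\[
\Ker\bigl(M\otimes\Id-\Id\otimes\overline{M}\bigr)\cap\sym=\overline{\Span\bigl\{\zeta\otimes\overline{\zeta}:M\zeta=\lambda\zeta,\ \lambda\in\sigma_p(M)\bigr\}}\:;
\]
intersecting over $k\in\NN$ then yields \eqref{eq:KernelL}. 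The decisive reformulation is that, under the Hilbert--Schmidt identification $T$ of Lemma~\ref{Hilbert--Schmidt isomorphism}, $M\otimes\Id$ acts by left multiplication $S\mapsto MS$ and $\Id\otimes\overline{M}$ by right multiplication $S\mapsto SM$ (one checks this on elements $x\otimes\overline{x}$ and extends by linearity and density), so that $M\otimes\Id-\Id\otimes\overline{M}$ becomes the commutator $S\mapsto MS-SM$. Passing to self-adjoint realizations, its kernel intersected with $\sym$ is exactly the set of self-adjoint Hilbert--Schmidt operators $S$ commuting with $M$, equivalently with every spectral projection $E(B)$ of $M$, equivalently still with the resolvent $(M-i)^{-1}$; the identification of this set with the kernel of the self-adjoint closure becomes transparent upon passing to the joint spectral representation of the commuting pair $\bigl(M\otimes\Id,\,\Id\otimes\overline{M}\bigr)$.

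The inclusion ``$\supseteq$'' is then immediate: if $M\zeta=\lambda\zeta$, then $\lambda\in\RR$ since $M$ is self-adjoint, $\zeta\in\cD(M^n)$ for all $n$, and $(M\otimes\Id-\Id\otimes\overline{M})(\zeta\otimes\overline{\zeta})=(\lambda-\overline{\lambda})\,\zeta\otimes\overline{\zeta}=0$; as the kernel of a closed operator, it contains the closed span of all such vectors. The substance is the converse ``$\subseteq$''. Given $\zeta$ in the kernel, $S:=T_\zeta$ is a self-adjoint Hilbert--Schmidt operator commuting with $M$, and the spectral theorem for compact self-adjoint operators provides a decomposition $S=\sum_n\mu_n P_n$, convergent in Hilbert--Schmidt norm, where the $\mu_n$ are the distinct nonzero eigenvalues of $S$ and $P_n$ is the finite-rank orthogonal projection onto $\Ker(S-\mu_n\Id)$. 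Each $P_n$ is a Riesz spectral projection of $S$ (a contour integral of the resolvent of $S$), hence commutes with every operator commuting with $S$, in particular with $(M-i)^{-1}$; a routine argument then gives $\operatorname{Ran} P_n\subset\cD(M)$ and $M(\operatorname{Ran} P_n)\subset\operatorname{Ran} P_n$. Therefore $M\restr\operatorname{Ran} P_n$ is a self-adjoint operator on the finite-dimensional space $\operatorname{Ran} P_n$, hence orthogonally diagonalizable with real eigenvalues, which are then genuine elements $\lambda_{n,j}\in\sigma_p(M)$; writing $P_n=\sum_j Q_{n,j}$ for the associated orthogonal projections and choosing orthonormal bases of the eigenspaces $V_{n,j}\subset\Ker(M-\lambda_{n,j}\Id)$, each $T^{-1}Q_{n,j}$ is a finite sum of terms $e\otimes\overline{e}$ with $Me=\lambda_{n,j}e$. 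Consequently
\[
\zeta=\sum_{n,j}\mu_n\,T^{-1}Q_{n,j}
\]
is a Hilbert--Schmidt-convergent series whose terms lie in $\Span\{\zeta'\otimes\overline{\zeta'}:M\zeta'=\lambda\zeta',\ \lambda\in\sigma_p(M)\}$, so $\zeta$ lies in its closure. Together with ``$\supseteq$'' and the reduction above, this proves the lemma.

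I expect the main obstacle to be the reformulation in the first paragraph: the operators $M\otimes\Id$ and $\Id\otimes\overline{M}$ are unbounded, and identifying the kernel of the self-adjoint closure of their difference with ``self-adjoint Hilbert--Schmidt operators commuting with $M$'' requires some care with domains (the two operators are self-adjoint and commute, so their difference is essentially self-adjoint, but one still has to verify that strong commutation with $M$ is equivalent to membership in the kernel). Everything after that --- the spectral decomposition of the compact operator $S$, the passage of commutativity to the finite-rank projections $P_n$, and the diagonalisation of $M$ on the finite-dimensional ranges $\operatorname{Ran} P_n$ --- is standard.
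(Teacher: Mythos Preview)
Your argument is correct, but the route is genuinely different from the paper's. The paper proves the single-operator identity by passing to the spectral representation of $L_k$ as a multiplication operator on $L^2(X_k,\mu_k)$: under this identification $\widetilde L_k$ becomes multiplication by $(\bar\lambda_k(y)-\lambda_k(x))^2$, so its kernel consists of functions supported on the ``diagonal'' $\{\lambda_k(x)=\bar\lambda_k(y)\}$, and the key observation is measure-theoretic --- two independent random variables agree with positive probability only on atoms of their law, which are precisely the points of $\sigma_p(L_k)$. You instead translate the kernel condition via the Hilbert--Schmidt isomorphism into strong commutation of $T_\zeta$ with $L_k$, and then run the compact--self-adjoint spectral decomposition of $T_\zeta$ to produce finite-dimensional $L_k$-invariant eigenspaces which you diagonalise. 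What you gain is that your method is essentially the same machine the paper deploys later in Lemma~\ref{lem:KerL} for the full operator $L$; in effect you have shown that Lemma~\ref{lem:SpecLk} can be absorbed into the proof of Lemma~\ref{lem:KerL} rather than proved separately by a measure-theoretic trick. What the paper's approach buys is that it sidesteps entirely the domain issue you flag at the end: once everything is a multiplication operator, the kernel is read off pointwise with no need to interpret commutation with an unbounded operator.

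On that domain issue: your instinct to work with $(M-i)^{-1}$ is right, but note a small wrinkle in your identification. Under the map $T$, the operator $\Id\otimes\overline M$ corresponds to $S\mapsto SM$ precisely \emph{because} $M$ is self-adjoint (you use $\langle z,My\rangle=\langle Mz,y\rangle$); when you pass to the bounded resolvent, $\Id\otimes\overline{(M-i)^{-1}}$ corresponds to $S\mapsto S(M+i)^{-1}=SR^*$, not $S\mapsto SR$. For self-adjoint $S$ this is harmless --- commutation with $R$ is equivalent to commutation with $R^*$, hence with the whole spectral resolution of $M$ --- but it is worth making explicit, since it is exactly where self-adjointness of $T_\zeta$ (i.e.\ $\zeta\in\sym$) enters.
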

\begin{proof}
Writing \(\widetilde{L}_k:=(iL_k\otimes \Id -i\Id \otimes \overline{L_k})^2\) as a self-adjoint
operator on \(\sym\), the claim follows from Lemma~\ref{Computation is rigorous} if we can show that
\begin{equation}
\sym\cap\Ker \widetilde{L}_k=\overline{\Span\left\{\zeta\otimes \overline{\zeta}:
L_k\zeta=\lambda \zeta , \lambda \in \sigma_p(L_k)\right\}}\;.
\end{equation}
We now consider $k$ as fixed. By the spectral theorem, we can find a standard 
probability space $(X_k,\mu_k)$ and a function $\lambda_k \colon X_k \to \RR$ such that 
$H$ is unitarily equivalent to $L^2(X_k,\mu_k)$ and, under this equivalence, $L_k$ is 
simply the multiplication operator by $\lambda_k$. Note that the complex structure of $H$ is
in general \textit{not} mapped to the standard complex structure on $L^2(X_k,\mu_k)$. However, we can
also apply the spectral theorem to $\overline{L_k}$, yielding a unitary equivalence 
$H \approx L^2(\bar X_k,\bar \mu_k)$ such that $\overline{L_k}$ is equivalent to multiplication by a map
$\bar \lambda_k \colon \bar X_k \to \RR$, as well as an anti-unitary map $u \mapsto \bar u$ from
$L^2(X_k,\mu_k)$ to $L^2(\bar X_k,\bar \mu_k)$ with the property that 
$\overline{\lambda_k \cdot u} = \bar \lambda_k \cdot \bar u$.

We then have $H \otimes H \approx L^2(X_k \times \bar X_k,\mu_k \otimes \bar \mu_k)$ and, under this equivalence,
\begin{equation}
\bigl(\widetilde{L}_k u\bigr)(x,y) = (\bar\lambda_k(y) - \lambda_k(x))^2 u(x,y)\;,
\end{equation}
so that $\Ker(\widetilde{L}_k)$ consists of those functions supported on $\Delta = \{(x,y) \,:\, \bar\lambda_k(y) = \lambda_k(x)\}$. 
Since $\lambda_k$ and $\bar \lambda_k$ are independent and independent random variables can only be equal with positive probability if their laws 
have at least one atom with positive mass, we can write 
\begin{equation}
\Delta \simeq \bigcup_{\lambda \in \sigma_p(L_k)} X_k^{(\lambda)}\times \bar X_k^{(\lambda)}\;,
\end{equation}
where $\sigma_p(L_k) = \{\lambda \,:\, (\lambda_k^\sharp \mu_k)(\{\lambda\}) > 0\}$, 
$X_k^{(\lambda)} = \lambda_k^{-1}(\{\lambda\})$, $\bar X_k^{(\lambda)} = \bar \lambda_k^{-1}(\{\lambda\})$, and $\simeq$ denotes equivalence up to null sets.
Note that $\sigma_p(L_k)$ is necessarily at most countable since the measure $\mu_k$ is finite.

Writing $H_k^{(\lambda)}\subset H$ for the subspace corresponding to  $L^2(X_k^{(\lambda)},\mu_k) \subset L^2(X_k,\mu_k)$
and similarly for $\bar H_k^{(\lambda)}\subset H$, we conclude that 
\begin{equation}
\label{eq:kerLtilde}
\Ker \widetilde{L}_k = \cl \Bigl(\bigoplus_{\lambda \in \sigma_p(L_k)} H_k^{(\lambda)} \otimes \bar H_k^{(\lambda)}\Bigr)\;.
\end{equation}
Since $H_k^{(\lambda)}$ is nothing but the eigenspace of $L_k$ with eigenvalue $\lambda$, we know
that complex conjugation maps $H_k^{(\lambda)}$ to $\bar H_k^{(\lambda)}$. The claim then follows
from the definition of $\sym$.
\end{proof}

\begin{lemma}
\label{eigs are finite dim}
Let \(\zeta\in \Ker(L)\), and fix any $k \in \mathbb{N}$. Then, any eigenvector of \(T_\zeta\) corresponding to a non-zero eigenvalue is a finite linear combination of eigenfunctions of \(L_k\).
\end{lemma}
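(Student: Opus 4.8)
The plan is to leverage the explicit description of $\Ker(L)$ from Lemma~\ref{lem:SpecLk}: pushing the orthogonal decomposition of $\zeta$ along the eigenspaces of $L_k$ through the isomorphism $T$ will realise $T_\zeta$ as a block-diagonal operator, after which the statement reduces to a short summability argument.

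\emph{Step 1 (decompose $\zeta$).} Fix $k$. Since $\zeta\in\Ker(L)$, the proof of Lemma~\ref{lem:SpecLk} — concretely the description \eqref{eq:kerLtilde} of the relevant kernel — provides an orthogonal decomposition
$$\zeta=\sum_{\lambda\in\sigma_p(L_k)}\zeta_\lambda\:,\qquad \zeta_\lambda\in H_k^{(\lambda)}\otimes\overline{H_k^{(\lambda)}}\:,\qquad \sum_{\lambda}\|\zeta_\lambda\|^2=\|\zeta\|^2<\infty\:,$$
where $H_k^{(\lambda)}\subset H$ is the eigenspace of $L_k$ for $\lambda$; being eigenspaces of the self-adjoint operator $L_k$, the $H_k^{(\lambda)}$ are closed and pairwise orthogonal.

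\emph{Step 2 (block structure of $T_\zeta$).} Writing $\zeta_\lambda$ as a limit of finite sums $\sum_j a_j\otimes\overline{b_j}$ with $a_j,b_j\in H_k^{(\lambda)}$, the identity \eqref{tensor product inner product} shows that $a\otimes\overline b$ corresponds to the operator $y\mapsto\langle y,b\rangle a$, so that $T_{\zeta_\lambda}$ — which makes sense as a Hilbert--Schmidt operator by Lemma~\ref{Hilbert--Schmidt isomorphism}, with $\|T_{\zeta_\lambda}\|$ bounded by the Hilbert--Schmidt norm $\|\zeta_\lambda\|$ — has range contained in $H_k^{(\lambda)}$ and vanishes on $(H_k^{(\lambda)})^\perp$. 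Since $\sum_\lambda\|\zeta_\lambda\|^2<\infty$, the partial sums of $\sum_\lambda T_{\zeta_\lambda}$ converge to $T_\zeta$ in Hilbert--Schmidt, hence in operator, norm; using the pairwise orthogonality of the $H_k^{(\lambda)}$ this exhibits $T_\zeta$ as block-diagonal with respect to $H=\big(\overline{\bigoplus_\lambda H_k^{(\lambda)}}\big)\oplus\big(\overline{\bigoplus_\lambda H_k^{(\lambda)}}\big)^\perp$, with block $T_{\zeta_\lambda}$ on $H_k^{(\lambda)}$ and block $0$ on the last summand (which absorbs any continuous spectrum of $L_k$).

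\emph{Step 3 (conclude).} Suppose $T_\zeta v=\mu v$ with $\mu\neq0$. Then $v=\mu^{-1}T_\zeta v\in\overline{\bigoplus_\lambda H_k^{(\lambda)}}$, so $v=\sum_\lambda v_\lambda$ with $v_\lambda$ the orthogonal projection of $v$ onto $H_k^{(\lambda)}$; inserting this into the block-diagonal form from Step 2 and projecting back onto $H_k^{(\lambda)}$ gives $T_{\zeta_\lambda}v_\lambda=\mu v_\lambda$ for each $\lambda$. For every $\lambda$ with $v_\lambda\neq0$ this forces $|\mu|\leq\|T_{\zeta_\lambda}\|\leq\|\zeta_\lambda\|$, and since $\sum_\lambda\|\zeta_\lambda\|^2<\infty$ only finitely many $\lambda$ can satisfy $\|\zeta_\lambda\|\geq|\mu|$. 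Hence $v=\sum_{\lambda\,:\,v_\lambda\neq0}v_\lambda$ is a finite sum of nonzero vectors $v_\lambda\in H_k^{(\lambda)}$, i.e.\ of eigenfunctions of $L_k$, which is exactly the assertion. I do not anticipate a genuine obstacle here: once \eqref{eq:kerLtilde} is available the argument is purely structural, and the only step that must be written carefully is the transfer in Step~2 — checking that $T$ carries the tensor-level orthogonal splitting of $\zeta$ to an operator-level block splitting of $T_\zeta$ with the claimed ranges and kernels, and that the convergence of $\sum_\lambda T_{\zeta_\lambda}$ is in a topology strong enough to preserve these features in the limit.
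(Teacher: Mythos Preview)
Your proof is correct and follows essentially the same route as the paper: decompose $\zeta$ along the eigenspaces of $L_k$ via \eqref{eq:kerLtilde}, observe that $T_\zeta$ is block-diagonal with respect to $H=H^{\cont}\oplus\bigoplus_\lambda H_k^{(\lambda)}$ and vanishes on $H^{\cont}$, and then use the Hilbert--Schmidt property to rule out infinitely many nonzero blocks contributing to a fixed eigenvalue. The only cosmetic difference is in the finiteness step: the paper argues directly that a Hilbert--Schmidt operator cannot have the same nonzero eigenvalue in infinitely many orthogonal blocks, whereas you make this explicit via the bound $|\mu|\le\|T_{\zeta_\lambda}\|\le\|\zeta_\lambda\|$ and square-summability of $\|\zeta_\lambda\|$.
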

\begin{proof}
Since one also has $\zeta \in \Ker \widetilde L_k$ by Lemma~\ref{Computation is rigorous}, \eqref{eq:kerLtilde} yields a decomposition 
$\zeta=\sum_{\lambda \in \sigma_p(L_k)} \zeta_\lambda$,
where \(\zeta_\lambda \in H_k^{(\lambda)}\otimes \bar H_k^{(\lambda)}\).
The operator $T_\zeta$ is therefore block diagonal under the direct sum decomposition
\begin{equation}
H = H^{\cont} \oplus \bigoplus_{\lambda \in \sigma_p(L_k)}H_k^{(\lambda)}\;,
\end{equation}
and such that $H^{\cont} \subset \Ker T_\zeta$.
Since $T_\zeta$ is Hilbert--Schmidt, given any value $\gamma \neq 0$, there can be at most finitely
many blocks with an eigenvalue $\gamma$, thus concluding the proof.
 \end{proof}
With this preliminary result in place, we can give the following characterisation of the kernel of \(L\) in the next lemma.
\begin{lemma}\label{lem:KerL}
The kernel of L, viewed as an operator on \(\sym\), is precisely the set of elements \(\zeta \in \sym\), such that the following holds: there exists a collection \(\{V_j\}_{j \in \mathbb{N}}\) of mutually orthogonal, finite dimensional subspaces of $H$ that are invariant for all of the \(L_k\), and a collection of numbers \(\gamma_j\in \mathbb{R}\) so that
\begin{equation}
\label{eq:kernelExpression}
T_\zeta = \sum_{j \in \mathbb{N}}\gamma_j P_j\:,
\end{equation}
where $P_j$ is the orthogonal projection onto $V_j$ (viewed as an element of $\sym$).
In particular, writing \(Q\) for the orthogonal projection from $H$ onto  \(H_\inv\), it holds that for any \(s\geq 0\), \(\psi \in L^2\), $\phi$ \(\cF_s\)-measurable,
\begin{equation}
\EE|\langle \psi,(1-Q)\Phi_{s,t}\phi\rangle|^2 \to 0\:.
\end{equation}
\end{lemma}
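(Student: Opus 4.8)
The plan is to prove the two inclusions of the stated description of $\Ker(L)$ and then read off the convergence statement from Lemma~\ref{Computation is rigorous}. For the inclusion ``$\supseteq$'' I would first observe that for any finite dimensional subspace $V\subseteq H$ that is invariant under all the $L_k$, the element $P_V\in\sym$ representing the orthogonal projection onto $V$ lies in $\Ker(L)$: since $L_k\restr V$ is a Hermitian matrix, one can choose an orthonormal eigenbasis $\{e_i\}$ of $V$ with $L_k e_i=\lambda_i e_i$, $\lambda_i\in\RR$, write $P_V=\sum_i e_i\otimes\overline{e_i}$, and compute $(L_k\otimes\Id-\Id\otimes\overline{L_k})P_V=\sum_i(\lambda_i-\lambda_i)\,e_i\otimes\overline{e_i}=0$; as this holds for every $k$, Lemma~\ref{Computation is rigorous} yields $P_V\in\Ker(L)$. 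Given then $\zeta\in\sym$ with $T_\zeta=\sum_j\gamma_j P_j$ and the $V_j$ mutually orthogonal, the partial sums lie in $\Ker(L)$ and, by orthogonality, $\sum_j\gamma_j^2\dim V_j=\|T_\zeta\|_{\mathrm{HS}}^2<\infty$, so they converge to $T_\zeta$ in $\sym$; since $\Ker(L)$ is closed, $\zeta\in\Ker(L)$.

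For the converse inclusion, let $\zeta\in\Ker(L)$. Then $T_\zeta$ is compact and self-adjoint, so the spectral theorem provides $T_\zeta=\sum_j\gamma_j P_{E_j}$, with the $\gamma_j$ the distinct non-zero eigenvalues and the $E_j$ the corresponding (finite dimensional, mutually orthogonal) eigenspaces. The crux is to show each $E_j$ is $L_k$-invariant for every $k$. Fixing $k$ and arguing as in the proof of Lemma~\ref{eigs are finite dim} --- using $\zeta\in\Ker\widetilde L_k$ together with \eqref{eq:kerLtilde} --- the operator $T_\zeta$ is block diagonal with respect to the spectral decomposition $H=H^\cont\oplus\bigoplus_{\lambda\in\sigma_p(L_k)}H_k^{(\lambda)}$ of $L_k$, with $H^\cont\subseteq\Ker T_\zeta$. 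Consequently $E_j=\bigoplus_\lambda\bigl(E_j\cap H_k^{(\lambda)}\bigr)$, a \emph{finite} direct sum since $\dim E_j<\infty$, so $E_j$ is contained in a finite sum of genuine eigenspaces of $L_k$; hence $E_j\subseteq\cD(L_k)$ and $L_k E_j\subseteq E_j$. As $k$ was arbitrary, setting $V_j:=E_j$ gives the claimed decomposition. This invariance step is where I expect the main difficulty to sit: knowing merely that individual eigenvectors of $T_\zeta$ are finite combinations of $L_k$-eigenfunctions (as in Lemma~\ref{eigs are finite dim}) does not suffice, and the unboundedness of $L_k$ is precisely what makes it necessary to control $E_j$ by a finite sum of honest eigenspaces.

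For the ``in particular'' statement I would take $\phi\in L^2(\Omega,\cF_s,\PP,H)$, so that $\zeta_0:=\EE(\phi\otimes\overline\phi)\in\sym$, and invoke Lemma~\ref{Computation is rigorous} to write $g(t):=\EE\bigl(\Phi_{s,t}\phi\otimes\overline{\Phi_{s,t}\phi}\bigr)=\e^{Lt}\zeta_0$. Since $L$ is negative and self-adjoint, $\e^{Lt}$ converges strongly to the orthogonal projection $P$ onto $\Ker(L)$, so $g(t)\to\zeta_\infty:=P\zeta_0\in\Ker(L)$ in $\sym$ and hence $T_{g(t)}\to T_{\zeta_\infty}$ in operator norm. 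By the characterisation just established, $\mathrm{Range}(T_{\zeta_\infty})\subseteq\overline{\Span\bigcup_j V_j}\subseteq H_\inv$, so $T_{\zeta_\infty}$ annihilates $H_\inv^\perp=\mathrm{Range}(1-Q)$. Finally, pulling the expectation inside the inner product and using \eqref{tensor product inner product} together with $(1-Q)^*=1-Q$, one gets $\EE|\langle\psi,(1-Q)\Phi_{s,t}\phi\rangle|^2=\langle T_{g(t)}(1-Q)\psi,\,(1-Q)\psi\rangle\longrightarrow\langle T_{\zeta_\infty}(1-Q)\psi,\,(1-Q)\psi\rangle=0$.
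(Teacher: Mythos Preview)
Your proof is correct and complete. The ``in particular'' part is essentially identical to the paper's, but your treatment of the two inclusions differs in instructive ways.

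For ``$\supseteq$'', you diagonalise $L_k\restr V$ for each fixed $k$ and observe that $P_V=\sum_i e_i\otimes\overline{e_i}$ is basis-independent, so the vanishing of $(L_k\otimes\Id-\Id\otimes\overline{L_k})P_V$ follows immediately. The paper instead fixes an arbitrary orthonormal basis of $V$, expands in that basis, and cancels terms using self-adjointness of $L_k$ together with invariance. Your version is a touch cleaner.

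For ``$\subseteq$'', you and the paper diverge more substantially. You exploit the block-diagonal structure of $T_\zeta$ with respect to $H=H^{\cont}\oplus\bigoplus_\lambda H_k^{(\lambda)}$ (established in the proof of Lemma~\ref{eigs are finite dim}) to decompose each eigenspace $E_j$ as a finite direct sum of pieces inside the $H_k^{(\lambda)}$, from which $E_j\subseteq\cD(L_k)$ and $L_k E_j\subseteq E_j$ are immediate. The paper instead argues directly that $T_\zeta$ and $L_k$ commute on eigenvectors: using the identity \eqref{tensor product inner product} and the condition $\zeta\in\Ker(L_k\otimes\Id-\Id\otimes\overline{L_k})$, it computes $\langle T_\zeta(L_k\psi),\phi\rangle=\gamma\langle L_k\psi,\phi\rangle$ for any $\phi\in\cD(L_k)$, whence $L_k\psi$ is again an eigenvector with eigenvalue $\gamma$. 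Your route leans on the spectral decomposition of $L_k$ already developed; the paper's route is a self-contained algebraic manipulation that avoids re-entering that machinery. Both yield the same conclusion with comparable effort.
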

\begin{proof}
We first prove the inclusion that if \(\zeta=\sum_{j \in \mathbb{N}}\gamma_j\sum_{\ell=1}^{m_j}\phi^j_\ell \otimes\overline{\phi^j_\ell } \), where $\phi^j_\ell$ are an orthonormal basis for $V_j$, then \(\zeta\in \Ker(L)\). In fact, we must simply show that \(L_k\otimes \Id -\Id\otimes \overline{L_k}\) applied to all of the terms in the direct sum vanishes. 
Indeed, for fixed $j$, we have, using the invariance of the space under \(L_k\): 
\begin{equation}
(L_k\otimes \Id -\Id\otimes \overline{L_k})\sum_{\ell=1}^{m_j}\phi^j_\ell\otimes\overline{\phi^j_\ell}=\sum_{\ell,m=1}^{m_j}\langle \phi^j_m ,L_k \phi_\ell^j\rangle \phi_m^j\otimes \overline{\phi_\ell^j}-\overline{\langle \phi^j_m,L_k\phi_\ell^j\rangle}\phi_\ell^j\otimes \overline{\phi_m^j}\:.
\end{equation}
Relabeling \(\ell \to m\), \(m \to \ell\) in the second term, and using properties of inner products, this is 
\begin{equation}
\sum_{\ell,m=1}^{m_j}\left(\langle \phi^j_m ,L_k \phi_\ell^j\rangle -\langle L_k \phi^j_m ,\phi^j_\ell\rangle \right)\phi^j_m\otimes \overline{\phi_\ell^j}\:.
\end{equation}
This vanishes by symmetry of the operator \(L_k\), showing the inclusion.
To prove the converse, we note first, that by Lemma~\ref{Hilbert--Schmidt isomorphism} we can identify the space \(\sym \) with the space of self-adjoint Hilbert--Schmidt operators from \(H\) to \(H\). We now claim that given an element \(\zeta \in \Ker(L)\), the eigenspaces of its associated Hilbert--Schmidt operator $T_\zeta$ corresponding to non-zero eigenvalues will be finite dimensional invariant subspaces for all the \(L_k\). Indeed, we know that if \(\zeta \in \Ker(L)\), then \(\zeta \in \Ker(L_k\otimes \Id-\Id\otimes \overline{L_k})\), for all \(k\). By density of $\mathcal{D}(L_k)$, \(\psi \) is an eigenfunction of $T_\zeta$ with eigenvalue \(\gamma\), if and only if for any \(\phi \in \cD(L_k)\), we have
\begin{equation}
\langle T_\zeta\psi, \phi\rangle =\gamma \langle \psi, \phi\rangle \;.
\end{equation}
Therefore, letting $\psi$ be an eigenfunction of $T_\zeta$, and using repeatedly the relation \eqref{tensor product inner product}, it holds by \eqref{tensor product inner product} that (note that \(\psi \in \cD(L_k)\) by Lemma~\ref{eigs are finite dim})
\begin{align}
\langle T_\zeta(L_k \psi),\phi \rangle_H &=\langle \zeta,  \phi \otimes \overline{L_k\psi}\rangle_{H\otimes H} = \langle ( \Id\otimes \overline{L_k})\zeta,\phi \otimes \overline{\psi}\rangle_{H\otimes H}=\langle (L_k \otimes \Id)\zeta,\phi \otimes \overline{\psi} \rangle_{H \otimes H} \notag \\
&=\langle \zeta , L_k\phi \otimes \overline{\psi}\rangle_{H \otimes H} =\langle T_{\zeta}(\psi),L_k\phi\rangle_H =\gamma \langle  \psi,L_k\phi\rangle_H =\gamma \langle L_k\psi,\phi\rangle_H\:.
\end{align}
Hence, it holds that \(L_k\psi\) is also an eigenfunction with eigenvalue \(\gamma\), 
whence we conclude that the eigenspaces of $T_\zeta$ are invariant for all the \(L_k\). Now,  since   \(T_\zeta\) is compact and self-adjoint,  its eigenspaces corresponding to non-zero eigenvalues are finite dimensional, and mutually orthogonal. By the spectral theorem, we may write \(T_\zeta =\sum_{j}\gamma_j P_j\), where $P_j$ is the orthogonal projection onto the eigenspace of $T_\zeta$ corresponding to the eigenvalue $\gamma_j$. We now easily observe the final part of the result, by noting that (recall $\pi(\{0\})$ is the orthogonal projection onto $\Ker(L)$)
 \begin{align}
\lim_{t \to \infty}\mathbb{E}|\langle\psi,(1-Q)\Phi_{s,t}\phi\rangle|^2 &=\lim_{t \to \infty}\langle (1-Q)\psi \otimes \overline{(1-Q)\psi}, \e^{Lt}\mathbb{E}[\phi \otimes \overline{\phi}]\rangle_{H\otimes H}\\\
&=\langle (1-Q)\psi \otimes \overline{(1-Q)\psi},\pi(\{0\})\mathbb{E}[\phi\otimes \overline{\phi}]\rangle_{H \otimes H}\\
&=\langle (1-Q)\psi,T_{\zeta} (1-Q)\psi\rangle_{H}\:,
\end{align}
where $\zeta =\pi(\{0\})\mathbb{E}[\phi\otimes \overline{\phi}]$, and we have used relation \eqref{tensor product inner product} for the last equality. Since $\zeta \in \Ker(L)$, by the expression we obtained for elements in the kernel, $T_{\zeta}$ maps into the space $H_\inv$, which is left invariant by $Q$, and therefore
\begin{equation}
\langle (1-Q)\psi,T_{\zeta} (1-Q)\psi\rangle_{H}=0\;,
\end{equation}
showing the result.
\end{proof}
We have now proved all the preliminaries, and can thus conclude with the proof of Theorem~\ref{thm:sRAGE}. In fact, in addition to Theorem~\ref{thm:sRAGE}, we shall be able to prove the following slightly stronger statement.
\begin{proposition}\label{proposition:sRAGE}
Under the assumptions of Theorem \ref{thm:sRAGE}, there furthermore holds the following. Let \(\sym\) be the closed subspace of \(H \otimes H\) generated by \(\{x\otimes \overline{x}:x \in H\}\), and let \(K_0\) be the orthogonal projection in \(\sym\) onto \(\Ker(L)\), where $L=-\sum_{k}(L_k\otimes \Id-\Id\otimes \overline{L_k})^2$. If $K:H\to H$ 
is a finite rank linear operator, and, for some $S\subset L^2(\Omega, \cF_\infty,\PP,H)$, the set \(\cK =\{\EE(\phi \otimes \overline{\phi}):\phi \in S\} \) is pre-compact, then for $\phi \in \mathcal{F}_s$
\begin{equation}\label{eq:unifconv}
\lim_{T \to\infty}\frac{1}{T}\int_{s}^{s+T} \EE\|K \Phi_{s,t}\phi\|^2\dd t = \EE\tr (K\otimes \bar K) K_0(\phi \otimes \overline \phi)\:,
\end{equation}
uniformly over \(\phi \in S\), where we view $(K\otimes \overline{K})K_0(\phi \otimes \overline \phi)$ as a random trace class operator $H \to H$. (Note that it is indeed of trace class, since $K$ is finite rank).
\end{proposition}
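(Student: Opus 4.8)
The plan is to reduce the claim to a statement about the Cesàro averages of $\e^{Lt}$ acting on $\sym$, and to upgrade this to uniform convergence over the pre-compact family $\cK$. First I would fix a finite rank operator $K \colon H \to H$, write $K = \sum_{i=1}^N \l \cdot, a_i\r b_i$ for vectors $a_i, b_i \in H$, and observe that for any $g \in \sym$ identified with a trace class operator $T_g$, one has $\tr\big((K \otimes \overline K) g\big) = \sum_{i,j} \l g, b_i \otimes \overline{b_j}\r \overline{\l a_i, a_j\r}$ — in particular $\|K\Phi_{s,t}\phi\|^2 = \tr\big((K\otimes \overline K)(\Phi_{s,t}\phi \otimes \overline{\Phi_{s,t}\phi})\big)$ pointwise in $\omega$, since $K \otimes \overline K$ is finite rank and the trace is continuous on trace class operators. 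Taking expectations and using Lemma~\ref{Computation is rigorous}, $\EE\|K\Phi_{s,t}\phi\|^2 = \tr\big((K\otimes \overline K)\e^{L(t-s)} g\big)$ with $g = \EE(\phi\otimes\overline\phi) \in \sym$, where the outer operator $K \otimes \overline K$ is bounded on $\sym$ (indeed Hilbert--Schmidt, being built from rank-one pieces). Thus the whole statement is the assertion
\begin{equation}
\frac{1}{T}\int_0^T \tr\big((K\otimes \overline K)\e^{Lt}g\big)\,\dd t \;\xrightarrow[T\to\infty]{}\; \tr\big((K\otimes \overline K)K_0 g\big)\;,
\end{equation}
uniformly over $g \in \cK$ (and the $\EE$ in \eqref{eq:unifconv} is then simply pulled back out).

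Next I would prove this Cesàro limit for a single fixed $g$. Since $L$ is self-adjoint and negative on $\sym$ (Lemma~\ref{Computation is rigorous}), let $\pi$ be its projection-valued spectral measure, so $K_0 = \pi(\{0\})$. By the spectral theorem, $\frac1T\int_0^T \e^{Lt}g\,\dd t = \int_{(-\infty,0]} \frac{1-\e^{\lambda T}}{-\lambda T}\,\pi(\dd\lambda)\,g$, where the integrand is bounded by $1$, equals $1$ at $\lambda = 0$, and tends to $0$ pointwise for $\lambda < 0$; dominated convergence in the spectral measure $\l g, \pi(\dd\lambda) g\r$ then gives $\frac1T\int_0^T \e^{Lt}g\,\dd t \to \pi(\{0\})g = K_0 g$ strongly in $\sym$. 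Applying the bounded operator $K \otimes \overline K$ and the continuous linear functional $\tr$ (continuous because $K\otimes\overline K$ maps $\sym$ into trace class with norm control, $K$ being finite rank) yields the scalar convergence for each fixed $g$.

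Finally, to promote pointwise convergence to uniform convergence over the pre-compact set $\cK$, I would use an equicontinuity argument. The family of maps $F_T \colon \sym \to \RR$, $F_T(g) = \frac1T\int_0^T \tr\big((K\otimes \overline K)\e^{Lt}g\big)\,\dd t$, together with $F_\infty(g) = \tr\big((K\otimes\overline K)K_0 g\big)$, is uniformly equi-Lipschitz: since $\|\e^{Lt}\|_{\sym\to\sym}\le 1$ and $\tr\circ(K\otimes\overline K)$ is a bounded functional on $\sym$ with norm $\lesssim \|K\|_{\mathrm{HS}}^2$, each $F_T$ has Lipschitz constant bounded uniformly in $T$. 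A standard $\eps/3$ argument over a finite cover of the compact set $\overline{\cK}$ by balls on which pointwise convergence holds then gives uniform convergence. The only genuinely delicate point is the very first reduction — verifying that $K$ finite rank indeed makes $(K\otimes\overline K)K_0(\phi\otimes\overline\phi)$ trace class with the trace identity $\EE\|K\Phi_{s,t}\phi\|^2 = \EE\tr((K\otimes\overline K)(\Phi_{s,t}\phi\otimes\overline{\Phi_{s,t}\phi}))$ holding in $L^1(\Omega)$ so that one may exchange $\EE$ and $\tr$; this is where the finite-rank hypothesis (rather than mere compactness) is used, and it is handled by writing everything explicitly in terms of the rank-one decomposition of $K$ and invoking Fubini together with the isometry property $\|\Phi_{s,t}\phi\| = \|\phi\|$ from \ref{Assumption1} to control the integrand.
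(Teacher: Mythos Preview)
Your proposal is correct and follows essentially the same route as the paper: reduce to Cesàro averages of $\e^{Lt}$ on $\sym$, establish pointwise convergence via the spectral theorem, and upgrade to uniform convergence over $\cl(\cK)$ by an equicontinuity argument based on the contraction property $\|\e^{Lt}\|_{\sym\to\sym}\le 1$. The paper phrases the final step as Arzel\`a--Ascoli while you give the explicit $\eps/3$ argument, but this is the same mechanism; your treatment of the trace identity and the exchange of $\EE$ with $\tr$ is in fact more explicit than the paper's.
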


\begin{proof}[Proof of Theorem~\ref{thm:sRAGE} and Proposition~\ref{proposition:sRAGE}]
In view of the previous Lemma~\ref{lem:KerL}, we have already proven the statement of Theorem~\ref{thm:sRAGE} when \(K\) is finite rank. The compact case is the same as for the deterministic RAGE theorem, so we omit it.

The commutativity claim follows upon noting that if \(V\) is a closed subspace which is invariant for all the \(L_k\), and \(P_V \) is the orthogonal projection onto \(V\), then \(P_V\) commutes with all the \(L_k\). To see that, note that  \(P_VL_kf=P_VL_k(P_Vf+(1-P_V)f)=L_kP_Vf+P_VL_k(1-P_V)f\). Now, for any \(v \in H_\inv\), we have 
\begin{equation}
\langle v, P_VL_k(1-P_V)f\rangle =\langle L_k P_Vv,(1-P_V)f\rangle =\langle P_VL_kP_Vv,(1-P_V)f\rangle =0 \:,
\end{equation}
where we have used the symmetry of the \(P_V, L_k\), and the invariance of the range of \(P_V\) under \(L_k\). It thus follows that \(P_VL_k(1-P_V)=0\), and so \(P_VL_kf=L_kP_Vf\).

To show the uniform convergence claim of Proposition~\ref{proposition:sRAGE}, note that the pointwise convergence claim follows readily by our previous arguments, and so we thus just need to establish equicontinuity in a suitable sense. Expanding the expression in \eqref{eq:unifconv}, we get
\begin{align}
F(T)&\eqdef\frac{1}{T}\int_{s}^{s+T} \sum_{i=1}^n|\alpha_i|^2\langle \psi_i\otimes \overline{\psi_i},\e^{L(t-s)}\EE[\phi\otimes \phi]\rangle \dd t \notag\\
&=\frac{1}{T}\int_{0}^{T} \sum_{i=1}^n|\alpha_i|^2\langle \psi_i\otimes \overline{\psi_i},\e^{Lt}\EE[\phi\otimes \phi]\rangle \dd t\:.
\end{align}
Thus, we have
\begin{equation}
|F(T)(\phi_1)-F(T)(\phi_2)|\leq \sum_{i=1}^n|\alpha_i|^2 \|\psi_i\otimes \overline{\psi_i}\|\frac{1}{T}\int_{0}^T \left\|\EE\left[\phi_1(t)\otimes \overline{\phi_1(t)}-\phi_2(t)\otimes \overline{\phi_2(t)}\right]\right\|\dd t \:.
\end{equation}
Now, the function 
$$
f(t)\eqdef\EE\left[\phi_1(t)\otimes \overline{\phi_1(t)}-\phi_2(t)\otimes \overline{\phi_2(t)}\right]
$$
satisfies \(f(t) =e^{Lt}f(0)\), and hence
$$
\|f(t)\|\leq\|f(0)\|=  \|\EE(\phi_1\otimes \overline{\phi_1})-\EE(\phi_2\otimes \overline{\phi_2})\|, \qquad \forall t\geq 0\:.
$$
But now, note that we may view the sequence $F(T)$ as continuous functions $F(T):\sym\to \mathbb{R}$, and therefore we have just shown that the sequence is in fact equicontinuous, and so uniform convergence on the compact set $\cl(\cK)$ follows by Arzela--Ascoli.
This concludes the proof.
\end{proof}

We finally give a result which gives a way to decompose the space \(H_\inv\) into a direct sum of finite dimensional invariant subspaces, and therefore to prove the sharpness of our Stochastic RAGE theorem:
\begin{lemma}\label{lem:Zorn}
The set \(H_\inv\) may be decomposed as
\begin{equation}
\label{Zorn}
H_\inv=\cl \left( \bigoplus_{n=1}^\infty V_n \right)\:,
\end{equation}
where the \(V_n\) are mutually orthogonal finite dimensional invariant subspaces for all the \(L_k\) which have no further non-trivial subspaces that are invariant for all the \(L_k\).
\end{lemma}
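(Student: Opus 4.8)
The plan is a maximality argument via Zorn's lemma, leaning on the fact---already established inside the proof of Proposition~\ref{proposition:sRAGE}---that the orthogonal projection onto any closed subspace invariant under all the $L_k$ commutes with each $L_k$. Call a finite dimensional invariant subspace \emph{irreducible} if it contains no proper non-zero subspace invariant under all the $L_k$, and let $\mathcal{S}$ be the collection of all sets of mutually orthogonal irreducible finite dimensional invariant subspaces, ordered by inclusion. Every chain in $\mathcal{S}$ has its union as an upper bound, so Zorn's lemma produces a maximal element $\{V_n\}_n$; since $\mathcal{H}$ is separable this family is at most countable. Set $W \eqdef \cl(\bigoplus_n V_n)$. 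Each $V_n$ is a finite dimensional invariant subspace, hence $V_n \subset H_\inv$ by definition, so $W \subset H_\inv$ and it remains to prove the reverse inclusion.

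The key intermediate step is that $W^\perp$ is invariant. Since the $V_n$ are mutually orthogonal, $P_W$ is the strong limit of the partial sums $\sum_{n \le N} P_{V_n}$, and each $P_{V_n}$ commutes with every $L_k$ by the computation in the proof of Proposition~\ref{proposition:sRAGE}. For $x \in \cD(L_k)$ one has $\sum_{n\le N} P_{V_n} x \to P_W x$ while $L_k \sum_{n\le N}P_{V_n}x = \sum_{n\le N}P_{V_n}L_k x \to P_W L_k x$, so the closedness of $L_k$ yields $P_W x \in \cD(L_k)$ and $L_k P_W x = P_W L_k x$. Hence $P_{W^\perp} = \Id - P_W$ also commutes with every $L_k$, mapping $\cD(L_k)$ into $\cD(L_k) \cap W^\perp$.

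Now suppose for contradiction that $W \subsetneq H_\inv$. Then not every finite dimensional invariant subspace is contained in $W$ (otherwise $H_\inv$, being their closed span, would lie in $W$), so pick such a subspace $U \not\subset W$. Then $U' \eqdef P_{W^\perp}U$ is a non-zero finite dimensional subspace of $W^\perp$, invariant under all the $L_k$ because $P_{W^\perp}$ commutes with them, and $U' \subset \bigcap_k \cD(L_k)$. I then decompose $U'$ into irreducible pieces: each $L_k$ restricted to the finite dimensional invariant subspace $U'$ is symmetric, hence self-adjoint on $U'$, and any invariant subspace of $U'$ has an invariant orthogonal complement within $U'$ (the projections onto it and onto its complement all commute with the $L_k$, as in the previous paragraph), so induction on $\dim U'$ gives $U' = U_1 \oplus \cdots \oplus U_m$ with the $U_i$ mutually orthogonal irreducible finite dimensional invariant subspaces. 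Since $U' \subset W^\perp$ and each $V_n \subset W$, the enlarged family $\{V_n\}_n \cup \{U_1,\dots,U_m\}$ again lies in $\mathcal{S}$ and strictly contains $\{V_n\}_n$, contradicting maximality. Therefore $W = H_\inv$, which is \eqref{Zorn}.

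The only points requiring genuine care are the domain bookkeeping for the unbounded $L_k$---handled throughout by the closedness of $L_k$ and the commutation property of projections onto invariant subspaces---and the complete reducibility of the finite dimensional invariant space $U'$; neither is a serious obstacle, so the proof is essentially a clean application of Zorn's lemma.
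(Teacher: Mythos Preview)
Your proof is correct and follows essentially the same route as the paper's: a Zorn's lemma argument on families of mutually orthogonal irreducible finite dimensional invariant subspaces, with the contradiction obtained by projecting an invariant subspace not contained in $W$ onto $W^\perp$ using the commutation of the relevant projection with each $L_k$. Your treatment is if anything more careful than the paper's, since you spell out the domain argument for the unbounded $L_k$ via closedness and give an explicit complete reducibility step for $U'$, whereas the paper simply extracts one minimal invariant subspace; but the architecture is the same.
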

\begin{proof}
The proof is a standard application of Zorn's Lemma. 
Let now \(P\) be the set of all subsets of the form \(\{V_1,V_2,\dots\}\), so that the \(V_i\) are finite dimensional invariant subspaces of the \(L_k\), that have no non-trivial invariant subspaces of the \(L_k\) and are mutually orthogonal. We equip this space with the partial order given by inclusion. Let now \(\{S_i\}_{i \in I}\) be a chain in \(P\). Then \(\bigcup_{i \in I}S_i\) is clearly an upper bound for the chain. Hence, by Zorn's Lemma we get a maximal element \(S=\{V_1,V_2,\dots\}\) for $P$ (note that $S$ is a countable collection, since $H$ is assumed separable). We now claim that \(H_\inv =\cl(\bigoplus_{j}V_j)\). Indeed, suppose not. Then, since \(H_\inv\) is the smallest closed vector space containing all finite dimensional invariant subspaces, and certainly \(\cl(\bigoplus_{j}V_j)\) is closed in the $\|\cdot \|_{H}$ topology, by orthogonality of the \(V_j\), there must exists a finite dimensional invariant subspace \(V\) which is not completely contained in \(\cl(\bigoplus_{j}V_j)\). Let \(Q\) be the orthogonal projection onto \(\cl(\bigoplus_{j}V_j)\). Then it follows that \((1-Q)V\) is a non-trivial, finite dimensional vector space. In fact, by the proof of the RAGE theorem, it is invariant for all the \(L_k\), since \(T\) and the \(L_k\) commute, so that for any \(v \in V\), \(L_k(1-Q)v=(1-Q)L_kv \in (1-Q)V\), since \(V\) is invariant for \(L_k\). Thus, taking a minimal invariant subspace \(\widetilde{V}\) of \((1-Q)V\), the set \(\{\widetilde{V}\}\) is orthogonal to every element in \(S\), and contains no non-trivial invariant subspaces, so that \(S\cup \{\widetilde{V}\}\) is a well defined element in \(P\), contradicting the maximality of \(S\).
\end{proof}

\begin{remark}Note that this implies that our version of the stochastic RAGE theorem is essentially sharp. Indeed, suppose that \(Qf\neq 0\). Then, writing \(P_j\) for the projection onto \(V_j\), Lemma~\ref{lem:Zorn}
implies that there exists at least one \(j\) so that \(P_jf \neq 0\). Let now \(K =P_j\), which is compact since \(V_j\) is finite dimensional. Then, it follows that 
\begin{equation}
\frac{1}{T}\int_{0}^T\EE\|K\Phi_{0,t}f\|^2\dd t =\frac{1}{T}\int_{0}^T\EE\|\Phi_{0,t}f\|^2\dd t\:.
\end{equation}
But we know that the SPDE conserves the norm, so that \(\EE\|\Phi_{0,t}f\|^2\dd t =\EE\|P_jf\|>0\), for all \(t\), and so \(\frac{1}{T}\int_{0}^T\EE\|K\Phi_{0,t}f\|^2\dd t \nrightarrow 0\).
\end{remark}
Finally, we remark on an elementary consequence of our analysis to the existence of invariant measures for the transport SPDE.
\begin{corollary}[Invariant Measures]
\label{invariant measures}
Consider the linear SPDE 
\begin{equation}
\label{eq:SPDE}
\dd f_t+\sum_{k}iL_k f_t \circ \dd W^k_t=0
\end{equation}
on the Hilbert space $H$.
This SPDE admits a non-trivial invariant measure if and only if there exists a finite dimensional subspace invariant under the action of the $iL_k$. Furthermore, every invariant measure $\mu$ of \eqref{eq:SPDE} 
satisfies $\supp(\mu) \subset H_\inv$.
\end{corollary}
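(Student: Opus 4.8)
The plan is to derive both implications from the stochastic RAGE theorem together with a Krylov--Bogolyubov compactness argument, with the support statement $\supp(\mu)\subseteq H_\inv$ doing double duty as the proof of the ``only if'' direction. We may assume throughout that $\mu$ is a probability measure, the general finite case following by normalisation; ``non-trivial'' then means $\mu\neq\delta_0$.

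For the ``if'' direction, suppose there is a non-zero finite-dimensional subspace $V\subset H$ with $iL_k V\subseteq V$ for all $k$. First I would observe that $V$ is invariant under the flow: by the same computation that establishes the commutativity claim in Theorem~\ref{thm:sRAGE}, the orthogonal projection $P_V$ commutes with every $L_k$ and hence with $\Phi_{0,t}$, so $\Phi_{0,t}V\subseteq V$; being an isometry, $\Phi_{0,t}$ then also preserves the unit sphere $S_V=\{v\in V:\|v\|=1\}$, which is compact. The transition kernel $P_t(v,\cdot):=\mathrm{Law}(\Phi_{0,t}v)$ defines a Markov semigroup on $S_V$ (using the flow property of $\Phi$ and the independence/stationarity of the Brownian increments), and it is Feller because $v\mapsto\EE[\varphi(\Phi_{0,t}v)]$ is continuous for bounded continuous $\varphi$, $\Phi_{0,t}$ being pathwise a bounded linear operator. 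By Krylov--Bogolyubov there is an invariant probability measure $\nu$ on $S_V$; regarded as a Borel measure on $H$ concentrated on $S_V\subset V$ it is invariant for~\eqref{eq:SPDE}, since $S_V$ is flow-invariant, and it is non-trivial because $0\notin S_V$.

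For the support claim, let $\mu$ be an invariant probability measure and fix $R>0$. The ball $B_R=\{f:\|f\|\le R\}$ is flow-invariant because $\Phi_{0,t}$ is an isometry, and a short computation with $\mu P_t=\mu$ then shows that $\mu_R:=\mu(\,\cdot\,\cap B_R)$ is again invariant; if $\mu(B_R)>0$, normalise it to a probability measure $\hat\mu_R$. On a possibly enlarged filtered probability space I would realise an $\cF_0$-measurable random variable $f_0$ of law $\hat\mu_R$, independent of the driving Brownian motions. Then $\EE\|f_0\|^2\le R^2<\infty$, so the last part of Lemma~\ref{lem:KerL} applies with $s=0$ and yields, for every $\psi\in H$,
\[
\EE\big|\langle\psi,(\Id-Q)\Phi_{0,t}f_0\rangle\big|^2\longrightarrow 0\qquad(t\to\infty),
\]
where $Q$ is the orthogonal projection onto $H_\inv$. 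On the other hand invariance gives $\Phi_{0,t}f_0\stackrel{d}{=}f_0$, hence $\langle\psi,(\Id-Q)\Phi_{0,t}f_0\rangle\stackrel{d}{=}\langle\psi,(\Id-Q)f_0\rangle$, so the left-hand side is independent of $t$ and equals $\EE|\langle\psi,(\Id-Q)f_0\rangle|^2$; this must therefore vanish for all $\psi\in H$. Running $\psi$ over a countable dense subset of $H$ gives $(\Id-Q)f_0=0$ almost surely, i.e.\ $\mu_R(H\setminus H_\inv)=0$, and letting $R\to\infty$ (using that $H_\inv$ is a closed, hence Borel, subspace) gives $\mu(H\setminus H_\inv)=0$, that is $\supp(\mu)\subseteq H_\inv$. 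In particular, if no non-zero finite-dimensional subspace is invariant under all the $iL_k$, then $H_\inv=\{0\}$ and any invariant measure is concentrated on $\{0\}$, hence trivial — this is the ``only if'' direction.

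The conceptual content is carried entirely by Theorem~\ref{thm:sRAGE}/Lemma~\ref{lem:KerL} and by Krylov--Bogolyubov; the only points I expect to require care are bookkeeping ones: checking that $\mu_R$ is genuinely invariant, arranging an initial condition of prescribed law as an $\cF_0$-measurable random variable independent of the noise so that Lemma~\ref{lem:KerL} applies verbatim, and, in the first part, verifying that $\Phi_{0,t}$ preserves $V$ (reusing the argument from the proof of Theorem~\ref{thm:sRAGE}) and that the induced semigroup on the compact sphere $S_V$ is Feller. None of these should present a genuine obstacle.
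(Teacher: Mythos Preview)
Your argument is correct, and for the ``if'' direction it coincides with the paper's: both restrict the dynamics to the finite-dimensional invariant subspace $V$, note that a compact norm-level set is preserved, and invoke Krylov--Bogolyubov (the paper uses the unit ball, you the unit sphere---either works).

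For the support claim and the ``only if'' direction, your route is genuinely different from the paper's. The paper passes to an \emph{ergodic} component $\tilde\mu$ supported in a ball, applies Birkhoff's ergodic theorem to identify the Ces\`aro limit $\frac1T\int_0^T|\langle x,\Phi_{0,t}f_0\rangle|^2\,\dd t$ with the spatial average $\int|\langle x,y\rangle|^2\,\dd\tilde\mu(y)$, and then uses the RAGE theorem to force this to vanish for $x\in H_\inv^\perp$. You avoid both the ergodic decomposition and Birkhoff entirely: by starting the flow from a stationary $L^2$ initial condition, the second moment $\EE|\langle\psi,(\Id-Q)\Phi_{0,t}f_0\rangle|^2$ is \emph{constant} in $t$, and Lemma~\ref{lem:KerL} already says it tends to zero, so it vanishes identically. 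This is more elementary and proves the full support statement $\supp(\mu)\subset H_\inv$ directly, whereas the paper's argument as written is really tailored to the case $H_\inv=\{0\}$ and would need the obvious modification (restricting to $x\in H_\inv^\perp$) together with an ergodic decomposition step to yield the general support claim. The only point where your version requires a moment's care is the verification that $\mu_R=\mu(\,\cdot\,\cap B_R)$ is again invariant; this follows since the isometry property forces $P_t(f,\cdot)$ to be supported on the sphere of radius $\|f\|$, so the transition kernel does not mix $B_R$ with its complement.
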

\begin{proof}
Suppose the SPDE admits an invariant measure $\mu_0\neq \delta_{0}$. Since
\eqref{eq:SPDE} preserves the norm in $H$, we can then find a non-zero ergodic invariant probability
measure $\Tilde{\mu}$ such that $\supp(\Tilde{\mu})\subset B_{1}$, the unit ball. In particular, for any $y \in H$, the map $x \mapsto |\langle y,x\rangle|^2$ is in $L^1(\Tilde{\mu})$.
Therefore, it follows by Birkhoff's ergodic theorem, that for $\tilde{\mu}$-a.e.  initial condition $f_0$, there holds that
\begin{equation}
\frac{1}{T}\int_0^T|\langle \Phi_{0,t}f_0,x\rangle |^2 \dd t \to \int_{H}|\langle y, x \rangle |^2 \dd\Tilde{\mu}(y)
\end{equation}
almost surely. However, taking expectations on both sides and employing dominated convergence, we see that 
\begin{equation}
\frac{1}{T}\int_{0}^T \mathbb{E}|\langle \Phi_{0,t}f_0,x\rangle |^2 \dd t \to 0
\end{equation}
as $T \to \infty$. Thus, we see that $\supp(\Tilde{\mu})\subset \mathrm{span}\{x\}^{\perp}$, and hence it follows that $\supp(\Tilde{\mu})=\{0\}$ since $x$ was arbitrary. To show the converse statement, suppose that there exists a finite dimensional subspace $V \subset H$ so that $iL_k V\subset V$, for all $k \in \mathbb{N}$. Restricting the SPDE to $V$, and writing elements of $V$ as vectors $X$ in $\mathbb{R}^m$, where $m=\mathrm{dim}(V)$, it follows that the SPDE becomes 
\begin{equation}
\label{finite dimensional system}
\dd X_t+\sum_{k \geq 0} A_k X_t \circ \dd W^k_t=0 \:,
\end{equation}
where the $A_k$ are a collection of antisymmetric matrices. Hence, it suffices to show the existence of a non-trivial invariant measure of \eqref{finite dimensional system} on $\mathbb{R}^m$. This in turn follows from
the Krylov--Bogolyubov theorem, since \eqref{finite dimensional system} is Feller and preserves the unit ball.
\end{proof}
\section{Enhanced Dissipation}\label{sec:enhanceddiss}
Having the stochastic RAGE theorem at our disposal, we are in the position of proving the enhanced dissipation result in Theorem~\ref{main result}. 
In addition to the abstract framework introduced in Section~\ref{sec:RAGEthm}, we let $A$ be a strictly positive self-adjoint unbounded linear operator
\begin{align}
A:\cD(A)\to H
\end{align}
such that the domain $\cD(A)$ is compactly embedded in $H$.
Thus, $A$ possesses a
strictly positive sequence of discrete eigenvalues $\{\lambda_\ell\}_{\ell \in \NN}$\:, 
\begin{equation}
\begin{cases}
0<\lambda_1\leq \lambda_2 \leq \ldots,\\
\lambda_\ell\to \infty \quad\text{for} \quad \ell \to \infty,
\end{cases}
\end{equation}
with associated eigenvectors $\{e_\ell\}_{\ell\in\NN}$, that form an orthonormal basis for $H$. Any element $\phi \in H$ can be therefore written uniquely as 
\begin{align}
\phi=\sum_{\ell\geq 1} \phi_\ell e_\ell, \qquad \phi_\ell=\l \phi,e_\ell\r \:.
\end{align}
For any $s\in \RR$, we can then define the scale of Hilbert spaces $H^s=\cD(A^\frac{s}{2})$, 
with norm 
\begin{align}
\|\phi\|^2_{H^s}=\sum_{\ell \geq 1} \lambda_\ell^{s}|\phi_\ell|^2.
\end{align}
(When $s < 0$, these spaces are obtained by completing $H$ under the corresponding norm.)
In addition, if one defines \(\widetilde{A}=A^\frac{1}{2}\otimes \Id+\Id\otimes A^\frac{1}{2}\), then  
Lemma~\ref{self-adjointtensor} implies that \(\widetilde{A}\) also has compact resolvent, and one can again define a scale of Hilbert spaces as above, which we denote $H^s \otimes H^s$.

We now consider the dissipative abstract evolution equation \eqref{eq:abstrdiff}. Further to the assumptions~\ref{Assumption1} from before, we place the following assumptions on the inviscid equation \eqref{eq:InviscidAbstract}:

\begin{enumerate} [label=(A\arabic*), ref=(A\arabic*)]
\setcounter{enumi}{1}
    \item\label{Assumption2} There exists a function \(B \in L^1_\loc(0,\infty)\) so that for any \(f_0 \in H^1\), the corresponding solution $f$ satisfies 
    \begin{equation}
    \EE\|f(t)\|_{H^1}^2 \leq B(t)\EE\|f_0\|_{H^1}^2\:,
    \end{equation}
    for every $t\geq0$.
    \item \label{Assumption3}  For any \(f_0 \in H\), the solution to the viscous problem \eqref{eq:abstrdiff} with initial data \(f_0\) lies in the dense set \(\cC\) of assumption~\ref{Assumption1} for any \(t>0\).
\end{enumerate}
The result of Theorem~\ref{main result} in this general setting reads as follows.

\begin{theorem}\label{Main Abstract Result}
Assume~\ref{Assumption1}--\ref{Assumption3}. The stochastic advection-diffusion equation \eqref{eq:abstrdiff} is dissipation enhancing if and only if there does not exist a non-trivial, finite dimensional subspace \(V\subset H^1\) which is invariant for all the \(iL_k\).
\end{theorem}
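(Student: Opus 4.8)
The plan is to prove the two implications separately; the common input in both is the energy balance
\begin{equation*}
\ddt\EE\|f^\nu(t)\|^2 \le -2\nu\,\EE\l Af^\nu(t),f^\nu(t)\r\:,
\end{equation*}
a consequence of It\^o's formula and the anti-self-adjointness of the $iL_k$, the right-hand side being meaningful for $t>0$ by~\ref{Assumption3}. For the necessity of the absence of an invariant subspace we argue by contraposition: given a non-trivial finite dimensional $V\subset H^1$ with $(iL_k)V\subset V$ for all $k$, the orthogonal projection $P_V$ is bounded as a map $H\to H^1$ (since $V$ is finite dimensional and sits inside $H^1$), say $\|P_V\phi\|_{H^1}\le C_V\|\phi\|$, and it commutes with every $iL_k$ by the commutativity argument already used in the proof of Proposition~\ref{proposition:sRAGE}. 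Hence $u^\nu:=P_V f^\nu$ solves $\dd u^\nu+\sum_k iL_k u^\nu\circ\dd W^k_t+\nu P_V Af^\nu\,\dd t=0$, and since transport conserves the norm and commutes with $P_V$ one obtains the exact identity $\ddt\EE\|u^\nu\|^2=-2\nu\,\EE\l A^{1/2}u^\nu,A^{1/2}f^\nu\r$. Starting from $f_0\in V$ with $\|f_0\|=1$ (so $u^\nu(0)=f_0$), a Cauchy--Schwarz estimate in time together with $\EE\|f^\nu(t)\|^2\le1$ and the bound $\nu\int_0^{\tau/\nu}\EE\|f^\nu(s)\|_{H^1}^2\,\dd s\le\tfrac12$ (also from the energy balance) yields $\EE\|f^\nu(\tau/\nu)\|^2\ge\EE\|u^\nu(\tau/\nu)\|^2\ge1-\sqrt2\,C_V\sqrt\tau$ uniformly in $\nu$; choosing $\tau$ small, depending only on $C_V$, shows that \eqref{eq:abstrdiff} is not dissipation enhancing.

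For the converse, assume that no non-trivial finite dimensional subspace of $H^1$ is invariant for all the $iL_k$, and suppose for contradiction that \eqref{eq:abstrdiff} is not dissipation enhancing. Then there are $\tau,\delta>0$, a sequence $\nu_j\downarrow0$ and unit vectors $f_0^j$ with $\EE\|f^{\nu_j}(\tau/\nu_j)\|^2>\delta$, hence, by monotonicity of the norm, $\EE\|f^{\nu_j}(s)\|^2>\delta$ for all $s\le\tau/\nu_j$. Let $g^{\nu_j}(s)=\EE(f^{\nu_j}(s)\otimes\overline{f^{\nu_j}(s)})$; by the viscous counterpart of Lemma~\ref{Computation is rigorous} (valid under~\ref{Assumption1}--\ref{Assumption3}) one has $g^{\nu_j}(s)=\e^{(L-\nu_j\widehat A)s}g_0^j$ on $\sym$, where $L\le0$ is the two-point generator of \eqref{eq:InviscidAbstract} with kernel described by Lemma~\ref{lem:KerL}, and $\widehat A\ge0$ is the self-adjoint operator $A\otimes\Id+\Id\otimes A$ governing the diffusive part of the two-point motion (which has compact resolvent, by Lemma~\ref{self-adjointtensor}). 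Writing $P_N$ for the spectral projection of $A$ onto $\{\lambda_1,\dots,\lambda_N\}$ and $\Pi_N:=\sum_{\ell\le N}e_\ell\otimes\overline{e_\ell}\in\sym$, one has $\EE\|P_N f^{\nu_j}(s)\|^2=\l\Pi_N,g^{\nu_j}(s)\r=\l\e^{(L-\nu_j\widehat A)s}\Pi_N,g_0^j\r\le\|\e^{(L-\nu_j\widehat A)s}\Pi_N\|_{\sym}$. Feeding this and the spectral bound $\l A\phi,\phi\r\ge\lambda_{N+1}\|(1-P_N)\phi\|^2$ into the integrated energy balance (using $\EE\|f^{\nu_j}(s)\|^2>\delta$) and rescaling time by $s=u/\nu_j$, we obtain $\int_0^\tau\|\e^{u(\rho_j L-\widehat A)}\Pi_N\|_{\sym}\,\dd u>\delta\tau-\tfrac1{2\lambda_{N+1}}$ with $\rho_j=1/\nu_j$; choosing $N$ with $\lambda_{N+1}>1/(\delta\tau)$ this becomes $\int_0^\tau\|\e^{u(\rho_j L-\widehat A)}\Pi_N\|_{\sym}\,\dd u>\delta\tau/2$ for every $j$. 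The point of this reduction is that $\Pi_N$ is a \emph{fixed} element of $\sym$, not depending on $f_0^j$.

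The contradiction will follow from the key estimate that $\|\e^{u_0(\rho L-\widehat A)}\Pi_N\|_{\sym}\to0$ as $\rho\to\infty$, for any fixed $u_0>0$: by monotonicity of $u\mapsto\|\e^{u(\rho L-\widehat A)}\Pi_N\|$ (as $\rho L-\widehat A\le0$) and the crude bound $\|\e^{u(\rho L-\widehat A)}\Pi_N\|\le\|\Pi_N\|=\sqrt N$, this forces $\int_0^\tau\|\e^{u(\rho_j L-\widehat A)}\Pi_N\|\,\dd u\le\sqrt N\,u_0+\tau\,\|\e^{u_0(\rho_j L-\widehat A)}\Pi_N\|$, which for $u_0$ small and $j$ large is below $\delta\tau/2$, contradicting the bound above. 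This estimate is the two-point analogue of the quantitative relaxation-enhancing lemma of \cite{Constantin_Kiselev_Ryzhik_Zlatos_2008}, and we would prove it by compactness and contradiction. If it failed, there would be $\rho_j\to\infty$ with $\|\psi_j(t)\|\ge\eta>0$ for all $t\in[0,u_0]$, where $\psi_j(t):=\e^{t(\rho_j L-\widehat A)}\Pi_N$; integrating $\ddt\|\psi_j\|^2=2\l(\rho_j L-\widehat A)\psi_j,\psi_j\r$ over $[0,u_0]$ shows that both $\rho_j\int_0^{u_0}\l(-L)\psi_j,\psi_j\r\,\dd t$ and $\int_0^{u_0}\l\widehat A\psi_j,\psi_j\r\,\dd t$ are bounded uniformly in $j$. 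Picking a time $t_j^\ast\in[0,u_0]$ at which the nonnegative integrand is at most its average, the vectors $\phi_j:=\psi_j(t_j^\ast)$ satisfy $\|\phi_j\|\ge\eta$, $\sup_j\|\widehat A^{1/2}\phi_j\|<\infty$ and $\|(-L)^{1/2}\phi_j\|\to0$; since $\widehat A$ has compact resolvent a subsequence of $\phi_j$ converges strongly in $\sym$ to some $\bar\phi$ with $\|\bar\phi\|\ge\eta$, $\bar\phi\in\cD(\widehat A^{1/2})$, and $\bar\phi\in\Ker(L)$ (using closedness of $(-L)^{1/2}$). Thus $\bar\phi$ is a non-zero element of $\Ker(L)\cap\cD(\widehat A^{1/2})$.

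The whole argument therefore hinges on the claim $\Ker(L)\cap\cD(\widehat A^{1/2})=\{0\}$, and this is the step I expect to be the main obstacle --- not a mere technicality, since $\Ker(L)$ itself may be non-trivial, built from eigenvectors of the $L_k$ lying only in $H$, not in $H^1$; it is exactly here that the $H^1$ regularity in the statement is used. By Lemma~\ref{lem:KerL}, if $\zeta\in\Ker(L)$ then $T_\zeta=\sum_j\gamma_j P_j$ with each $V_j=\mathrm{Ran}(P_j)$ a finite dimensional subspace invariant for all the $iL_k$; writing $T_\zeta=\sum_m\gamma'_m\,w_m\otimes\overline{w_m}$ in a spectral basis, a direct computation gives $\l\widehat A\zeta,\zeta\r=2\sum_m(\gamma'_m)^2\|w_m\|_{H^1}^2$, so $\zeta\in\cD(\widehat A^{1/2})$ forces every $w_m$ with $\gamma'_m\neq0$ --- hence each non-trivial $V_j$ --- to lie in $H^1$. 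If $\zeta\neq0$ then at least one such $V_j$ is a non-trivial finite dimensional subspace of $H^1$ invariant for all the $iL_k$, contradicting our assumption; hence $\Ker(L)\cap\cD(\widehat A^{1/2})=\{0\}$. This contradicts the existence of the non-zero $\bar\phi$ above, proving the key estimate and with it the contradiction that completes the proof that \eqref{eq:abstrdiff} is dissipation enhancing.
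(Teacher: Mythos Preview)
Your proposal is essentially correct and takes a genuinely different route from the paper. For the easy direction both arguments are equivalent: the paper projects onto a basis of $V$ and derives an ODE for $|v|^2$, you project with $P_V$ and bound via Cauchy--Schwarz; the outcomes coincide. The real difference is in the hard direction. The paper stays at the one-point level: it invokes the stochastic RAGE theorem through Lemma~\ref{pure point growth} to get a uniform lower bound on time-averaged $H^1$ growth, then runs an iterative scheme comparing $f^\nu$ with the inviscid $\Phi_{\tau_0,t}f^\nu(\tau_0)$ over short windows (this is where~\ref{Assumption2} enters), splitting into cases according to the size of $K_0\EE(f^\nu(\tau_0)\otimes\overline{f^\nu(\tau_0)})$. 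You instead pass entirely to the two-point level, reduce to a statement about the single fixed vector $\Pi_N$ under the rescaled semigroup $\e^{u(\rho L-\widehat A)}$, and close by a compactness--contradiction argument whose core is the clean characterisation $\Ker(L)\cap\cD(\widehat A^{1/2})=\{0\}$; your computation $\l\widehat A\zeta,\zeta\r=2\sum_m(\gamma_m')^2\|w_m\|_{H^1}^2$ makes the role of the $H^1$ hypothesis completely transparent and bypasses the iteration and the case splitting. What the paper's route buys is that it only ever manipulates the \emph{inviscid} two-point generator $L$, whose essential self-adjointness is exactly Lemma~\ref{Computation is rigorous}; what your route costs is that you must make sense of $L-\nu\widehat A$ (equivalently $\rho L-\widehat A$) as a self-adjoint operator on $\sym$ and identify its form domain with $\cD((-L)^{1/2})\cap\cD(\widehat A^{1/2})$, so that the energy identity for $\psi_j$ and the closedness argument for $(-L)^{1/2}$ go through. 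You flag this as ``the viscous counterpart of Lemma~\ref{Computation is rigorous}'', and in the concrete transport-noise setting it is immediate (both are second-order operators and $L$ is relatively form-bounded by $\widehat A$), but in the abstract setting under~\ref{Assumption1}--\ref{Assumption3} alone it requires an argument analogous to the appendix proof of essential self-adjointness; this is the one technical point that is asserted rather than proved in your outline.
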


\begin{remark}\label{rmk:concrete}
In the case \(L_k=0\) for every $k\geq 2$, the condition of Theorem~\ref{Main Abstract Result} is equivalent to the absence of \(H^1\) eigenfunctions of \(L_1\), thus recovering the condition in \cite{Constantin_Kiselev_Ryzhik_Zlatos_2008}. Moreover, Theorem~\ref{main result} corresponds  to the case
$$
H=\left\{\phi\in L^2(\TT^d): \int_{\TT^d} \phi(x)\dd x=0\right\}\:,
$$
with $i L_k=\sigma_k\cdot \nabla $ and $A=-\Delta$ with domain $\cD(A)=\dot{H}^2(\TT^d)$.
\end{remark}

\subsection{Uniform Growth of the Inviscid Equation}
There are two main ingredients we will need in order to prove this result. The first is the generalisation of the RAGE theorem in Theorem~\ref{thm:sRAGE}, whilst the second is a uniform lower bound on the \(H^1\) growth of solutions to \eqref{eq:InviscidAbstract}. Recall that we denote by $K_0$ the $\sym$ orthogonal projection onto the kernel of $L$.
\begin{lemma}
\label{pure point growth}
Assume that there does not exist any non-trivial finite dimensional subspace of \( H^1\) which is invariant for all the $L_k$'s, and let $K_0$ be the $\sym$ orthogonal projection onto the kernel of $L$. Suppose that there exists $\cS \subset L^2(\Omega,\cF,\PP;H)$ so that the image $\cK$ of $\cS$ under the map $f \mapsto \mathbb{E}(f\otimes \overline{f})$ is pre-compact in $\sym$, and further that \(\inf_{f \in S}\|K_0\EE(f\otimes \overline{f})\|\geq a\), for some \(a>0\). Then, given \(B_0>0\), one can find \(N_0,T_0>0\), so that uniformly for all $s \geq 0$, for all \(N\geq N_0\), \(T\geq T_0\), one has
\begin{equation}
\frac{1}{T}\int_{s}^{s+T} \mathbb{E}\|P_N \Phi_{s,t}f\|_{H^1}^2\dd t \geq B_0, \qquad \forall f \in \cS \cap L^2(\Omega,\cF_s,\PP,H)\:,
\end{equation}
where \(P_N\) is the orthogonal projection onto the first \(N\) eigenfunctions of \(A\).
\end{lemma}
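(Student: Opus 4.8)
The plan is to reduce the statement to Proposition~\ref{proposition:sRAGE}, applied to the finite-rank operators $K_N := A^{1/2}P_N$, and then to extract quantitative information from the description of $\Ker(L)$ furnished by Lemma~\ref{lem:KerL}. First I would observe that, since $P_N$ commutes with $A$, for every $g\in H$ one has $\|K_N g\|^2 = \sum_{m\le N}\lambda_m|\langle g,e_m\rangle|^2 = \|P_N g\|_{H^1}^2$, so $K_N$ is bounded and of finite rank and $\EE\|P_N\Phi_{s,t}f\|_{H^1}^2 = \EE\|K_N\Phi_{s,t}f\|^2$. Proposition~\ref{proposition:sRAGE} then shows that $\frac1T\int_s^{s+T}\EE\|P_N\Phi_{s,t}f\|_{H^1}^2\,\dd t$ converges, as $T\to\infty$, to $\EE\tr\bigl((K_N\otimes\overline{K_N})K_0(f\otimes\overline f)\bigr)$, which, unwinding the identification of tensors with Hilbert--Schmidt operators and using linearity of $T$, of $K_0$, of $\EE$ and of the trace, equals $h_N(\zeta^*_f)$, where I write $h_N(\zeta):=\tr(K_N T_\zeta K_N)$ and $\zeta^*_f := K_0\EE(f\otimes\overline f)\in\Ker(L)$. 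Crucially this convergence is uniform over $f\in\cS$, and since the time average on the left depends on $f$ and $s$ only through $\EE(f\otimes\overline f)\in\cK$, it is in fact uniform over $s\ge0$ and over $f\in\cS\cap L^2(\Omega,\cF_s,\PP,H)$ — this is already built into the Arzela--Ascoli argument used to prove Proposition~\ref{proposition:sRAGE}.

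The core of the matter is then to show that $h_N(\zeta^*_f)\to+\infty$ as $N\to\infty$, uniformly over $f\in\cS$, for which I would use monotonicity together with a divergence statement. For monotonicity I would first argue that $T_{\zeta^*_f}\ge0$: since $L$ is negative self-adjoint, $K_0$ is the strong operator limit of $\e^{Lt}$ as $t\to\infty$, and by Lemma~\ref{Computation is rigorous} each $\e^{Lt}$ maps $\EE(\psi\otimes\overline\psi)$ to $\EE(\Phi_{0,t}\psi\otimes\overline{\Phi_{0,t}\psi})$, hence into the closed convex cone of elements of $\sym$ with positive associated Hilbert--Schmidt operator; thus $K_0$ preserves this cone and $T_{\zeta^*_f}\ge0$. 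It follows that $h_N(\zeta^*_f) = \sum_{m\le N}\lambda_m\langle e_m,T_{\zeta^*_f}e_m\rangle$ is a partial sum of non-negative terms, so $N\mapsto h_N(\zeta^*_f)$ is non-decreasing with limit $\tr(A^{1/2}T_{\zeta^*_f}A^{1/2}) = \sum_m\lambda_m\langle e_m,T_{\zeta^*_f}e_m\rangle$. Writing $T_{\zeta^*_f} = \sum_j\gamma_j P_j$ as in Lemma~\ref{lem:KerL}, with $\gamma_j\ge0$ and $V_j$ finite-dimensional, mutually orthogonal and invariant for all the $L_k$, this limit equals $\sum_j\gamma_j\sum_m\lambda_m\|P_j e_m\|^2$. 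By hypothesis no non-trivial finite-dimensional subspace of $H^1$ is invariant for all the $L_k$, so whenever $\gamma_j>0$ the space $V_j$ is non-trivial and not contained in $H^1$; choosing any $v\in V_j\setminus H^1$ then forces $\sum_m\lambda_m\|P_j e_m\|^2\ge\|v\|_{H^1}^2/\|v\|^2 = +\infty$. Therefore $\tr(A^{1/2}T_\zeta A^{1/2}) = +\infty$ for \emph{every} nonzero $\zeta\in\Ker(L)$ with $T_\zeta\ge0$, and $\zeta^*_f$ is nonzero since $\|\zeta^*_f\| = \|K_0\EE(f\otimes\overline f)\|\ge a$.

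To make this uniform I would pass to the closure: $\cK^* := K_0(\cK)$ is precompact, $\overline{\cK^*}$ is a compact subset of $\Ker(L)$ contained in the positive cone, and $\|\zeta\|\ge a$ for all $\zeta\in\overline{\cK^*}$; the previous paragraph applies verbatim to each such $\zeta$, giving $\sup_N h_N(\zeta) = +\infty$ on $\overline{\cK^*}$. Since each $h_N$ is continuous (indeed linear) on $\sym$ and the sequence $(h_N)$ is non-decreasing on the positive cone, Dini's theorem on the compact set $\overline{\cK^*}$ — applied after composing with a bounded increasing continuous function such as $\arctan$ to handle the infinite limit — yields $\inf_{\zeta\in\overline{\cK^*}}h_N(\zeta)\to+\infty$. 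Hence, given $B_0>0$, I may fix $N_0$ so that $h_{N_0}(\zeta^*_f)\ge 2B_0$ for all $f\in\cS$, and by monotonicity in $N$ this remains true for all $N\ge N_0$.

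Finally, Proposition~\ref{proposition:sRAGE} applied with $K = K_{N_0}$ produces $T_0$ such that $\bigl|\frac1T\int_s^{s+T}\EE\|P_{N_0}\Phi_{s,t}f\|_{H^1}^2\,\dd t - h_{N_0}(\zeta^*_f)\bigr|\le B_0$ for all $T\ge T_0$, all $s\ge0$ and all $f\in\cS\cap L^2(\Omega,\cF_s,\PP,H)$. Since $\|P_N g\|_{H^1}\ge\|P_{N_0}g\|_{H^1}$ whenever $N\ge N_0$, this yields $\frac1T\int_s^{s+T}\EE\|P_N\Phi_{s,t}f\|_{H^1}^2\,\dd t\ge h_{N_0}(\zeta^*_f) - B_0\ge B_0$ for all such $N$, $T$, $s$ and $f$, which is the assertion. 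The step I expect to be delicate is precisely the double uniformity of $T_0$: uniformity in $s$ costs nothing because the time average only sees $\EE(f\otimes\overline f)\in\sym$ — the reformulation already underlying Proposition~\ref{proposition:sRAGE} — whereas uniformity in $N\ge N_0$ cannot be obtained by invoking that proposition directly, since $\|K_N\| = \lambda_N^{1/2}\to\infty$ and its convergence rate degrades with $\|K\|$; this is exactly what the monotone comparison $P_N\ge P_{N_0}$ in the $H^1$ norm is there to circumvent.
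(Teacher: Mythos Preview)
Your proof is correct and follows essentially the same strategy as the paper's: both identify the time average with $\tr(P_N A\, T_{K_0\EE(f\otimes\overline f)})$ via Proposition~\ref{proposition:sRAGE}, show this diverges uniformly over $\cS$ using positivity of $T_{K_0\EE(f\otimes\overline f)}$, the structure of $\Ker(L)$ from Lemma~\ref{lem:KerL}, and compactness of $\cK$, and then conclude by monotonicity in $N$. The only cosmetic difference is that the paper obtains the uniform lower bound $\geq 2B_0$ through a direct contradiction/subsequence argument, whereas you package the same compactness input as an application of Dini's theorem on $\overline{K_0(\cK)}$.
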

\begin{proof}
We first claim that given \(B_0>0\), one can pick \(N\in\NN\) so that for any \(n\geq N\), \(f \in \cS \cap (\cup_{s \geq 0}L^2(\Omega,\cF_s,\PP,H))\),
\begin{equation}\label{eq:kernelgrowth}
\mathbb{E} \tr(P_nAK_0 (f \otimes \bar f)) \geq 2B_0\;.
\end{equation}
Suppose not. Then, there exists a sequence $n_j \to \infty$, and corresponding elements \(\{f_{n_j}\} \subset \cS \cap L^2(\Omega,\cF_{s_{n_j}},\PP,H)\) so that 
\begin{equation}\label{eq:sumcontra}
\mathbb{E} \tr(P_{n_j} A K_0 (f_{n_j} \otimes \overline{f_{n_j}}))< 2B_0\;.
\end{equation}
By pre-compactness of $\cK$, modulo passing to a subsequence, one can find $ v \in \cl(\cK)$ such that \(\EE(f_{n_j}\otimes \overline{f_{n_j}}) \to v\).
Note that, for any $n_j\in \NN$, 
\begin{equation}
\langle K_0\EE(f_{n_j}\otimes \overline{f_{n_j}}),e_\ell \otimes \overline{e_\ell}\rangle =\lim_{t \to \infty}\mathbb{E}|\langle e_\ell,\Phi_{s_{n_j},t}f_{n_j}\rangle |^2 \geq 0\:,
\end{equation}
and hence, for fixed \(N\), we deduce that
\begin{align}
\tr(P_N AK_0 v)
&= \lim_{j \to \infty}\sum_{\ell=1}^N\lambda_\ell \l K_0\EE(f_{n_j}\otimes \overline{f_{n_j}}),e_\ell \otimes \overline{e_\ell}\r\\ 
&\leq \liminf_{j \to \infty} \mathbb{E} \tr(P_{n_j} A K_0 (f_{n_j} \otimes \overline{f_{n_j}})) \leq  2B_0\:.
\end{align}
Since \(K_0v \in \Ker(L)\) and \(K_0\) preserves real and imaginary parts, we may write 
\begin{equation}
 K_0v=\sum_{i}\alpha_i \sum_{m=1}^{m_i} \phi_m^i \otimes\overline{\phi_m^i} \;,
\end{equation}
where \(\{\phi_m^i\}_{m=1}^{m_i}\) is an orthonormal basis of the eigenspace corresponding to the eigenvalue \(\alpha_i\) of the self-adjoint Hilbert--Schmidt operator \(K_0v\). We claim that all the \(\alpha_i\) are non-negative. Indeed, by continuity, \(K_0v\) is the limit in \(\sym\) of \(K_0\EE(f_{n_j} \otimes \overline{f_{n_j}})\). Then
\begin{equation*}
\alpha_i=\langle K_0 v, \phi^i_m \otimes \overline{\phi^i_m}\rangle_{H \otimes H}= \lim_{j \to \infty}\langle K_0\EE(f_{n_j}\otimes \overline{f_{n_j}}),\phi_m^i \otimes\overline{\phi_m^i} \rangle_{H \otimes H} =  \lim_{j \to \infty}\lim_{t \to \infty} \EE|\langle  \phi_m^i, \Phi_{s_{n_j},t}f_{n_j}\rangle|^2 \geq 0\;.
\end{equation*}
Hence, \(\alpha_i \geq 0\), for all \(i\). Further, as \(\|K_0v\| \geq a\), there exists at least one strictly positive eigenvalue, say $\alpha_{i_0}>0$. Thus, for any \(N\)  we have 
\begin{equation}
\sum_{\ell=1}^N \lambda_\ell \sum_{i} \alpha_i \sum_{m=1}^{m_i} \langle e_\ell\otimes \overline{e_\ell},\phi_m^i \otimes \overline{\phi_m^i}\rangle \leq 2B_0\;.
\end{equation}
In particular,   
\begin{equation}
\alpha_{i_0} \sum_{m=1}^{m_{i_0}}\sum_{\ell=1}^N\lambda_\ell  |\langle e_\ell,\phi^{i_0}_m\rangle|^2\leq 2B_0 \:,
\end{equation}
for any \(N\). Taking \(N \to \infty\) yields that \(\|\phi_m^{i_0}\|^2_{H^1}\leq 2\alpha_{\ell_0}^{-1}B_0\) for \(m=1,\dots, m_{i_0}\), and so it follows that \(\mathrm{span}\{\phi^{i_0}_m\}_{m=1}^{m_{i_0}}\subset H^1\). But this is a contradiction since we know this to be a finite dimensional invariant subspace of all of the \(L_k\), and hence it is not entirely contained in \(H^1\) by  assumption. Thus one can indeed pick \(N\) so that \eqref{eq:kernelgrowth} holds for any $n \geq N, f \in S \cap(\cup_{s \geq 0}L^2(\Omega,\cF_s,\PP,H))$. Next,  consider the quantity
\begin{equation}\label{eq:avera}
\frac{1}{T}\int_{s}^{s+T}\EE\|P_N \Phi_{s,t}f\|_{H^1}^2\dd t=
\frac{1}{T}\int_{s}^{s+T}\EE\|A^{\frac{1}{2}}P_N  \Phi_{s,t}f\|^2\dd t\;.
\end{equation}
Since \(A^{\frac{1}{2}}P_N \) is a finite rank operator, Proposition~\ref{proposition:sRAGE} implies that \eqref{eq:avera} converges uniformly for \(f \in S\), so that one can find a \(T_0=T_0(N)\) such that 
\begin{equation}
\label{eq:convergence Kernel}
\left|\frac{1}{T}\int_{s}^{s+T}\EE\|P_N \Phi_{s,t}f\|_{H^1}^2\dd t-\mathbb{E} \tr(P_NAK_0 (f \otimes \bar f))\right|\leq B_0
\end{equation}
for any \(T\geq T_0\). Hence, first we fix \(N_0\) large enough so that 
$$
\mathbb{E} \tr(P_{N_0}AK_0 (f \otimes \bar f))\geq 2B_0\:,
$$
for any \(f\in S \cap( \cup_{s \geq 0}L^2(\Omega,\cF_s,\PP,H))\). Then, pick \(T_0\) as above so that \eqref{eq:convergence Kernel} holds for this \(N_0\). It then follows that, for any \(T\geq T_0\), 
\begin{equation}
\frac{1}{T}\int_{s}^{s+T}\EE\|P_{N_0} \Phi_{s,t}f\|_{H^1}^2\dd t \geq B_0 \:,
\end{equation}
and since the expression above is increasing in \(N_0\), the conclusion  also holds for any \(N \geq N_0\).
\end{proof}

\subsection{Enhanced Dissipation and the Proof of Theorem~\ref{Main Abstract Result}}
We now have all the ingredients required for the proof of the main result. Notice that solutions to \eqref{eq:abstrdiff} satisfy the energy equation
\begin{equation}\label{eq:energy}
    \ddt \EE\|f^\nu\|^2+2\nu \EE\|f^\nu\|^2_{H^1}=0 \:.
\end{equation}
\begin{proof}[Proof of Theorem~\ref{Main Abstract Result}]
We begin by proving the harder direction, namely enhanced dissipation for solutions to \eqref{eq:abstrdiff} under the non-existence of invariant $H^1$ subspaces for the $L_k$'s. Thus, having fixed \(\tau,\delta >0\), we need to prove that 
\begin{equation}
\EE\|f^{\nu}(\tau/\nu)\|^2< \delta \:, 
\end{equation}
for all $\nu>0$ sufficiently small and all initial data $f_0\in H$ with $\|f_0\|=1$.

Pick \(M\) large enough so that \(\e^{-\frac{\lambda_M\tau}{4}}<\delta\), and define the set \(K=\{\phi:\EE\|\phi\|^2 \leq 1, \EE\|\phi\|_{H^1}^2 \leq \lambda_M\}\). Note that for \(\phi \in K\), it holds that 
\begin{equation}
\EE(\phi \otimes \overline{\phi})\in \cK\;,\quad\text{with}\quad 
\cK =\{\psi \in \sym:\|\psi\|_{H^1\otimes H^1}^2\leq 4\lambda_M\}\;.
\end{equation}
Indeed, using convexity of the norm we have
\begin{equation}
\|(A^\frac{1}{2}\otimes \Id +\Id \otimes A^\frac{1}{2})\EE(\phi \otimes\overline{\phi})\|\leq \EE\|A^\frac{1}{2}\phi\otimes\overline{\phi}\|+\EE\|\phi \otimes A^\frac{1}{2} \overline{\phi}\|
=2\EE\|A^\frac{1}{2}\phi\|\|\phi\| \leq 2\sqrt{\lambda_M}\:.
\end{equation}
By Lemma~\ref{self-adjointtensor}, \(\cK\) is a compact subset of \(\sym \), and so pick first \(N_1\) so that \(N_1\geq M\), and set \(K_1=\{\phi \in K:\|K_0 \EE(\phi\otimes \overline{\phi})\|\geq \frac{1}{4 N_1}\}\). Next, pick \(N_2\geq N_1\) large enough so that the result in Lemma~\ref{pure point growth} with \(B_0=2\lambda_M\) holds on the set \(K_1\), for some \(T_0\) large enough. Then, apply Proposition~\ref{proposition:sRAGE} to pick \(\tau_1 \geq T_0\) so that for any \(t>\tau_1\), \(\phi \in K\), 
\begin{equation}
\biggl|\frac{1}{t}\int_{s}^{s+t}\EE\|P_{N_1}\Phi_{s,s'}\phi\|^2\dd s'-\sum_{i=1}^{N_1} \langle e_i\otimes \overline{e_i},K_0\EE(\phi \otimes \overline{\phi})\rangle \biggr|<\frac{\lambda_M}{20\lambda_{N_1}}\;.
\end{equation} 
Finally, pick \(\nu_0>0\) so that \(\tau_1<\frac{\tau}{2\nu_0}\), and 
\begin{equation}
\nu_0 \int_0^{\tau_1} B(t)\dd t <\frac{1}{10 \lambda_{N_2}}\:,
\end{equation}
where $B(t)$ is from~\ref{Assumption2}.
Let \(\nu<\nu_0\).
If \(\EE\|f^\nu(t)\|^2_{H^1} \geq \lambda_M \EE\|f^\nu(t)\|^2\) for all \(t\in [0,\tau/\nu]\), then, from the energy equation \eqref{eq:energy},  \(\EE\|f^\nu(\tau/\nu)\|^2 \leq \e^{-2\lambda_M\tau} <\delta\),  and we are done. Otherwise, let \(\tau_0\in [0,\tau/\nu]\) be the first time so that \(\EE\|f^\nu(\tau_0)\|^2_{H^1} < \lambda_M \EE\|f^\nu(\tau_0)\|^2\). We claim that 
\begin{equation}\label{eq:claimexp}
\EE\|f^\nu(\tau_0+\tau_1)\|^2 \leq \e^{-\frac12 \lambda_M \nu \tau_1 }\EE\|f^\nu(\tau_0)\|^2\:.
\end{equation}
Indeed, let $f(t)=\Phi_{\tau_0,t}f^\nu(\tau_0)$ denote the inviscid solution to \eqref{eq:InviscidAbstract}.
Then the difference \(\rho = f^\nu-f\)  satisfies
\begin{equation}
\dd \rho+\sum_{k}iL_k\rho\circ \dd W^k_t =-\nu A f^\nu \dd t\:,
\end{equation}
so that we see 
\begin{equation}
\ddt \|\rho\|^2=2\nu \langle -A f^\nu, \rho\rangle \leq 2\nu(\|f\|_{H^1}\|f^\nu\|_{H^1}-\|f^\nu\|_{H^1}^2) \leq \nu\|f\|^2_{H^1}\:.
\end{equation}
Integrating the above inequality and using~\ref{Assumption2} gives
\begin{equation}
\label{small difference}
\EE\|\rho(t)\|^2  \leq \nu_0\EE\|f^\nu(\tau_0)\|_{H^1}^2 \int_{0}^{t}B(s')\dd s' \leq \frac{1}{10\lambda_{N_2}}\EE\|f^\nu(\tau_0)\|_{H^1}^2 \leq \frac{\lambda_M}{10\lambda_{N_2}}\EE\|f^\nu(\tau_0)\|^2\:.
\end{equation}
Now, we split into two cases: 
\begin{enumerate}[label=(Case \arabic*), ref=(Case \arabic*)]
    \item\label{case1}  \(\|K_0\EE(f^\nu(\tau_0)\otimes \overline{f^\nu(\tau_0)})\| < \frac{1}{4N_1}\EE\|f^\nu(\tau_0)\|^2\), so that 
    \begin{equation}\label{eq:small-lim}
        \biggl|\sum_{i=1}^{N_1} \langle K_0\EE\left(f^\nu(\tau_0)\otimes \overline{f^\nu(\tau_0)}\right),e_i\otimes \overline{e_i}\rangle\biggr|\leq \frac14 \EE\|f^\nu(\tau_0)\|^2;
    \end{equation}
    \item\label{case2} \(\|K_0\EE(f^\nu(\tau_0)\otimes \overline{f^\nu(\tau_0)})\| \geq \frac{1}{4N_1}\EE\|f^\nu(\tau_0)\|^2\).
\end{enumerate}
Let us deal with~\ref{case1} first. Since
\begin{equation}\label{eq:lowhigh}
\|(\Id-P_{N_1})f(t)\|^2=\|f(t)\|^2-\|P_{N_1}f(t)\|^2
\end{equation}
and 
\begin{equation}
\frac{f^\nu(\tau_0)}{(\EE\|f^\nu(\tau_0)\|^2)^\frac{1}{2}} \in K\:,
\end{equation} 
we integrate \eqref{eq:lowhigh}, use that $\lambda_M\leq\lambda_{N_1}$ and \eqref{eq:small-lim}, and obtain
\begin{equation}
\frac{1}{\tau_1}\int_{\tau_0}^{\tau_0+\tau_1}\EE\|(\Id-P_{N_1})f(t)\|^2\dd t \geq \EE\|f^\nu(\tau_0)\|^2 -\left(\frac{\lambda_M}{20\lambda_{N_1}}+\frac{1}{4}\right)\EE\|f^\nu(\tau_0)\|^2 \geq \frac{7}{10}\EE\|f^\nu(\tau_0)\|^2\:.
\end{equation}
Hence, by \eqref{small difference} and $\lambda_M\leq \lambda_{N_2},$ we see that 
\begin{align}
\frac{1}{\tau_1}\int_{\tau_0}^{\tau_0+\tau_1}\EE\|(\Id-P_{N_1})f^\nu(t)\|^2\dd t 
&\geq \frac{1}{\tau_1}\int_{\tau_0}^{\tau_0+\tau_1}\EE\left(\frac{1}{2}\|(\Id-P_{N_1})f(t)\|^2-\|(f(t)-f^\nu(t))\|^2\right)\dd t\notag\\
&\geq \frac{7}{20}\EE\|f^\nu(\tau_0)\|^2-\frac{\lambda_M}{10\lambda_{N_2}}\EE\|f^\nu(\tau_0)\|^2 \geq \frac{1}{4}\EE\|f^\nu(\tau_0)\|^2\:.
\end{align}
It therefore follows that 
\begin{equation}
\int_{\tau_0}^{\tau_0+\tau_1}\EE\|f^\nu(t)\|_{H^1}^2\dd t \geq \frac{\lambda_{N_1}\tau_1}{4}\EE\|f^\nu(\tau_0)\|^2\:.
\end{equation}
Thus, since by the energy equation \eqref{eq:energy} we have the estimate 
\begin{equation}
\EE\|f^\nu(\tau_0+\tau_1)\|^2 \leq \EE\|f^\nu(\tau)\|^2-2\nu \int_{\tau_0}^{\tau_0+\tau_1}\EE\|f^\nu(t)\|_{H^1}^2\dd t \:,
\end{equation}
we get
\begin{equation}
\EE\|f^\nu(\tau_0+\tau_1)\|^2 \leq\EE\|f^\nu(\tau_0)\|^2\left(1-\frac12\nu\lambda_{N_1}\tau_1\right) \leq \e^{- \frac12\nu\lambda_{N_1}\tau_1}\EE\|f^\nu(\tau_0)\|^2 \leq \e^{-\frac{1}{2}\lambda_M\nu \tau_1}\EE\|f^\nu(\tau_0)\|^2\:,
\end{equation}
which implies our initial claim \eqref{eq:claimexp}.

Next, we deal with~\ref{case2}, for which we have
$$
\frac{f^\nu(\tau_0)}{(\EE\|f^\nu(\tau_0)\|^2)^\frac{1}{2}}\in K_1\:.
$$
Therefore, by assumption on $N_2$, the conclusion of Lemma \ref{pure point growth} holds and so we have that 
\begin{equation}
\frac{1}{\tau_1}\int_{\tau_0}^{\tau_0+\tau_1}\EE\|P_{N_2}f(t)\|^2_{H^1}\dd t \geq 2\lambda_M \EE\|f^\nu(\tau_0)\|^2\:.
\end{equation}
Furthermore, by \eqref{small difference}, we see that 
\begin{equation}
\EE\|P_{N_2}(f^\nu(t)-f(t))\|_{H^1}^2 \leq \frac{\lambda_M}{10}\EE\|f^\nu(\tau_0)\|^2\:,
\end{equation}
so that 
\begin{equation}
\frac{1}{\tau_1}\int_{\tau_0}^{\tau_0+\tau_1}\EE\|P_{N_2}f^\nu(t)\|^2_{H^1}\dd t \geq \left(\frac{1}{2}(2\lambda_M)-\frac{\lambda_M}{10}\right)\EE\|f^{\nu}(\tau_0)\|^2 \geq \frac{\lambda_M}{2}\EE\|f^\nu(\tau_0)\|^2\:.
\end{equation}
Thus, we have that 
\begin{equation}
\EE\|f^\nu(\tau_0+\tau_1)\|^2 \leq \EE\|f^\nu(\tau_0)\|^2(1-\nu \lambda_M\tau_1) \leq \e^{-\lambda_M\nu \tau_1}\EE\|f^\nu(\tau_0)\|^2\:,
\end{equation}
which is again \eqref{eq:claimexp}.
We now claim that there exists \(\tau_2 \in [\frac{\tau}{2\nu}, \frac{\tau}{\nu}]\) so that 
\begin{equation}
\EE\|f^\nu(\tau_2)\|^2 \leq \e^{-\frac{1}{4} \lambda_M\nu \tau_2} <\delta\:.
\end{equation}
If \(\tau_0+\tau_1 \geq \frac{\tau}{2\nu}\), then we are done. Otherwise, thanks to the uniformity of $\tau_1$, we may iterate this bound until we find some \(\tau_2 \in [\frac{\tau}{2\nu},\frac{\tau}{\nu}]\) so that 
\begin{equation}
\EE\|f^\nu(\tau_2)\|^2 \leq \e^{-\frac{1}{2}\lambda_M \nu \tau_2}\leq \e^{-\frac{1}{4}\tau \lambda_M} < \delta\:.
\end{equation}
Thus, since \(\tau/\nu\geq \tau_2\), it follows that 
\begin{equation}
\EE\|f^\nu(\tau/\nu)\|^2 \leq \EE\|f^\nu(\tau_2)\|^2 <\delta\:,
\end{equation}
completing the first implication.

For the reverse implication, let \(V\subset H^1\) be a finite dimensional space invariant for all the \(L_k\). Let \(\phi_1,\dots, \phi_m\) be an orthonormal basis for \(V\), and set the vector \(\Phi=(\phi_1,\dots, \phi_m)^T\). Then, for all \(k\), there exists an anti-hermitian matrix \(B_k\), so that \(iL_k \Phi =B_k \Phi\). For a solution \(f^\nu(t)\) to \eqref{eq:abstrdiff}, we then consider the system of equations
\begin{equation}
\label{eq:inner product finite invariant}
\dd\langle f^\nu, \Phi\rangle =-\sum_{k} \langle iL_k f^\nu(t),\Phi\rangle \circ \dd W^k_t+\nu \langle -A f^\nu(t),\Phi\rangle \dd t\:.
\end{equation}
Here we denote by $\langle f^\nu, \Phi\rangle$ the vector which has $j^{th}$ component $\langle f^\nu, \Phi\rangle_j=\langle f^\nu,\phi_j\rangle$.
Using the antisymmetry and the expressions for the actions of \(iL_k\) on \(\Phi\) from the above, we see that \eqref{eq:inner product finite invariant} is equal to 
\begin{equation}
\dd\langle f^\nu, \Phi\rangle =\sum_{k} \overline{B_k}\langle f^\nu(t),\Phi\rangle \circ \dd W^k_t+\nu \langle -A f^\nu(t),\Phi\rangle \dd t\:.
\end{equation}
Applying It\^o's formula, we see that (denoting $v=\langle f^\nu(t),\Phi\rangle$)
\begin{equation}
\label{eq:lower bound computation}
\dd |v|^2 +\sum_{k}2 \Re (v^TB_k\overline{v}) \circ dW^k_t=-2\nu \Re v^T \overline{\langle A^\frac{1}{2}f^\nu(t), A^\frac{1}{2}\Phi\rangle}\:.
\end{equation}
In particular, since the $B_k$ are anti-hermitian, it holds that $\Re v^TB_k \overline{v}=\Re\langle v, \overline{B_k}v\rangle_{\mathbb{C}^m}=0$.
Therefore, we observe the energy balance equality
\begin{equation}
\label{eq:energy dissipation lower bound}
\ddt |v|^2 = -2\nu \Re v^T\overline{\langle A^\frac{1}{2}f^\nu(t),A^\frac{1}{2}\Phi\rangle}\:.
\end{equation}
Furthermore, note the upper bound
\begin{equation}
|v^T\overline{\langle A^\frac{1}{2}f^\nu(t),A^\frac{1}{2}\Phi\rangle}| \leq |v|\|f^\nu(t)\|_{H^1}\|\Phi\|_{H^1} \leq \frac{1}{2}\|\Phi\|_{H^1}^2|v|^2+\frac{1}{2}\|f^\nu(t)\|_{H^1}^2\:.
\end{equation}
Therefore, we see from \eqref{eq:energy dissipation lower bound} that 
\begin{equation}
\frac{\dd }{\dd t}|v(t)|^2 \geq -\nu (\|\Phi\|_{H^1}^2|v(t)|^2+\|f^\nu(t)\|_{H^1}^2)\:,
\end{equation}
which implies the inequality
\begin{equation}
\frac{\dd}{\dd t} (\e^{\|\Phi\|^2_{H^1}\nu t}|v(t)|^2) \geq -\nu \|f^\nu(t)\|^2 \e^{\|\Phi\|^2_{H^1}\nu t}\:.
\end{equation}
Therefore, integrating this inequality yields
\begin{equation}
\label{eq:lower bound v}
|v(t)|^2 \geq \e^{-\|\Phi\|^2_{H^1}\nu t}|v(0)|^2-\nu \left (\int_0^t \e^{-\|\Phi\|^2_{H^1}\nu (t-s)}\|f^\nu(s)\|_{H^1}  ds\right )\:.
\end{equation}
Pick now e.g. \(f^\nu(0)=\phi_1\). By the energy equation \eqref{eq:energy}, we have
\begin{equation}
\nu \int_{0}^\infty \|f^\nu(s)\|_{H^1}^2\dd s \leq \frac{1}{2}\:.
\end{equation}
Since $\e^{-\|\Phi\|^2_{H^1}\nu (t-s)} \leq 1$, for all $s \leq t$, we therefore bound \eqref{eq:lower bound v} by 
\begin{equation}
|v(t)|^2 \geq \e^{-\|\Phi\|^2_{H^1}\nu t}-\frac{1}{2}\:.
\end{equation}
For any $t \leq \frac{1}{\alpha \nu \|\Phi\|_{H^1}^2}$, we thus see that 
\begin{equation}
\|f^\nu(t)\|^2 \|\Phi\|_{H^1}^2 \geq \e^{-\frac{1}{\alpha}}-\frac{1}{2}\:.
\end{equation}
Taking $\alpha$ large enough, we see that for all $t \leq \frac{1}{\nu \alpha \|\Phi\|^2_{H^1}}$ there holds uniformly as $\nu \to 0$ that
\begin{equation}
\|f^\nu(t)\|^2 \geq \frac{1}{4\|\Phi\|^2_{H^1}}\:,
\end{equation}
and thus we indeed do not have enhanced dissipation.
\end{proof}

\subsection{Examples}
The main class of examples we have in mind are drift-diffusion equations of the form \eqref{eq:StochAdvDiff}, in the setting described in Remark~\ref{rmk:concrete}. Hence, we fix a mean-free initial condition $f_0\in L^2(\TT^d)$, as well as a collection \(\sigma_k\) of \(C^{2,\alpha}\) divergence-free vector fields (for some \(\alpha>0\)). As mentioned in Section~\ref{sub:transportnoise}, the corresponding stochastic transport equation 
\begin{equation}
\dd f +\sum_{k=1}^{\infty} \sigma_k \cdot \nabla f  \circ \dd W^k_t=0
\end{equation}
generates a \(C^2\) stochastic flow of diffeomorphisms 
\begin{equation}\label{eq:stoch-char}
\phi_{s,t}(x)=x+\sum_{k=1}^\infty\int_{s}^t \sigma_k(\phi_{s,z}(x))\circ \dd W^k_z\;,
\end{equation}
yielding weak solutions by the formula \(f(t,x)=f_0(\phi_{0,t}^{-1}(x))\) (see \cite{Kunita_1997}).
In particular, it follows readily that for \(f_0 \in C^3\), the above is in fact a strong solution. Further, the conservation of the $L^2$ norm is
a consequence of the divergence free condition. Finally, note that 
\begin{equation}
\EE\|f_0\circ \phi_{0,t}^{-1}\|^2_{\dot{H}^1}\leq \EE\int_{\mathbb{T}^d}|\nabla f_0(\phi_{0,t}^{-1}(x))|^2|D\phi_{0,t}^{-1}(x)|^2\dd x\:.
\end{equation}
Changing variables and conditioning on \(\cF_0\), we get that (since \(\phi_{0,t}\) is independent of \(\cF_0\))
\begin{align}
\EE\int_{\mathbb{T}^d}|\nabla f_0(\phi_{0,t}^{-1}(x))|^2|D\phi_{0,t}^{-1}(x)|^2\dd x&\leq \EE\int_{\mathbb{T}^d}|\nabla f_0(x)|^2\left|(D\phi_{0,t}(x))^{-1}\right|^2\dd x\\
&\leq \EE\|D\phi_{0,t}^{-1}\|_{L^\infty}^2\EE\|f_0\|_{\dot{H}^1}^2\;,
\end{align}
so that our assumptions~\ref{Assumption1}-\ref{Assumption2} are satisfied as long as \(\EE\|D\phi_{0,t}^{-1}\|_{L^\infty}^2\) is a finite function in \(L^1_\loc(0,\infty)\), which follows for instance from \cite[Lem.~3.9]{Gess_Yaroslavtsev}. Hence, such equations fall within our general framework.

\subsubsection{Stochastic Shear Flows}

A particularly simple class of equations on $\TT^2$ to analyze within our framework are those of the form \eqref{eq:StochAdvDiff} where the \(\sigma_j\cdot \nabla \) are shear flows, i.e.\ of the form \(u_j(y)\partial_x\). Indeed, considering the eigenfunction equation and taking Fourier transforms in \(x\), we 
reduce it to the problem
\begin{equation}
(\ell u_j(y)- \lambda)\hat{g}(\ell,y)=0, \qquad \ell\in \ZZ\:.
\end{equation}
If there does not exists a set of non-zero measure on which \(u_j(y)\) is constant, then the only way this can happen is if \(\hat{g}\) is supported at \(\ell=0\), i.e. if it is independent of \(x\). Therefore, it is natural to pose the shear flow SPDE on the space of functions which are mean zero in $x$. Furthermore, if there does exist a non-null set where \(u_j\) is constant, then it must hold that any eigenfunction of $u_j(y) \partial_x$ is a linear combination of the terms of the form $\e^{i\ell x}g_k(y)$, $\supp(g_k) \subset \{u_j =\text{constant}\}$. Since any finite dimensional subspace $V$ of $H=\{\phi \in L^2(\mathbb{T}^2):\int_{\mathbb{T}}\phi(x,y)\dd x=0\}$ which is invariant for $u_j(y) \partial_x$ is spanned by such eigenfunctions, it holds that $P_\ell V$ consists entirely of functions supported on sets of positive Lebesgue measure where $u_j$ is constant. Here, $P_\ell$ is the projection onto the $\ell^{th
}$ Fourier mode in $x$. Therefore, we deduce that any finite dimensional subspace which is invariant for all the \(u_j(y)\partial_x\) is contained in the span of the set 
\begin{equation}
\{\e^{i\ell x}g_\ell(y):\mathrm{supp}(g_\ell)\subset E \}\:,
\end{equation}
where 
$$
E=\bigcap_{j}\bigcup_{z \in \mathbb{R}:\text{Leb}(u_j^{-1}\{z\})>0}u_j^{-1}\{z\}\:.
$$
Since \(H^1\) embeds into the continuous functions in $1d$, we thus can see that stochastic shear flows are dissipation enhancing on \(H=\{\phi \in L^2(\mathbb{T}^2):\int_{\mathbb{T}}\phi(x,y)\dd x=0\}\) if and only if \(E\) does not contain an open interval. We shall say a lot more about the case of shear flows, giving explicit enhanced dissipation rates, in Section~\ref{sec:stochshear}.

\subsubsection{General Hypoelliptic Equations}
More general stochastic-transport equations arise when the vector fields
\(\{\sigma_1, \sigma_2,\dots\}\) satisfy the strong H\"ormander condition. From \cite{Gess_Yaroslavtsev}, it is known that under some further hypoellipticity assumptions on the projective and the two point process associated to \eqref{eq:stoch-char}, the solution to \eqref{eq:StochAdvDiff}
undergoes enhanced dissipation at a time-scale $O(\lvert\log\nu\rvert)$. 
In fact, we show below that hypoellipticity of the generator of the two-point motion away from the diagonal is sufficient to guarantee enhanced dissipation.
\begin{lemma}
Let $H=\{f \in L^2(\mathbb{T}^d): \int_{\mathbb{T}^d}f(x)\dd x=0\}$, and let the $iL_k$ be of the form $\sigma_k\cdot \nabla$ for $\sigma_k \in C^\infty$ divergence free, real vector fields. Then, if the operator $L=-\sum_{k}(L_k \otimes Id-Id\otimes \overline{L_k})^2$ satisfies H\"ormander's bracket condition away from the diagonal $\{x=y\}$, the SPDE is dissipation enhancing on $H$.
\end{lemma}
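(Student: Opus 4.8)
The plan is to invoke Theorem~\ref{Main Abstract Result}. Since the present setting is exactly the concrete instance of the abstract framework described in Remark~\ref{rmk:concrete} (and the standing assumptions~\ref{Assumption1}--\ref{Assumption3} are verified as in the preceding examples), dissipation enhancement is equivalent to the non-existence of a non-trivial finite-dimensional subspace $V\subset H^1$ that is invariant under all the $\sigma_k\cdot\nabla$. I would argue by contradiction and assume such a $V$ exists. Taking an orthonormal basis $\{\phi_1,\dots,\phi_m\}$ of $V$ and setting $\zeta:=\sum_{\ell}\phi_\ell\otimes\overline{\phi_\ell}\in\sym$, Lemma~\ref{lem:KerL} (its easy ``inclusion'' direction) gives $\zeta\in\Ker(L)$, and then Lemma~\ref{Computation is rigorous} yields $\zeta\in\bigcap_k\Ker(L_k\otimes\Id-\Id\otimes\overline{L_k})$.

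Next I would pass to the concrete realisation on $\mathbb{T}^{2d}$. Under the identification of $\sym$ with symmetric square-integrable kernels, $\zeta$ corresponds to $h(x,y)=\sum_\ell\phi_\ell(x)\overline{\phi_\ell(y)}$, and $L_k\otimes\Id-\Id\otimes\overline{L_k}$ acts on such kernels as $-iY_k$ with $Y_k=\sigma_k(x)\cdot\nabla_x+\sigma_k(y)\cdot\nabla_y$; equivalently $L$ acts as $\sum_kY_k^2$, the generator of the two-point motion of \eqref{eq:stoch-char}. Hence $Y_kh=0$ for every $k$, at least distributionally. The core of the argument is then: on $U:=\mathbb{T}^{2d}\setminus\{x=y\}$, Hörmander's bracket condition makes $\sum_kY_k^2$ hypoelliptic, so $h\in C^\infty(U)$ and $Y_kh\equiv0$ there; iterating Lie brackets of the $Y_k$ and using that they span the tangent space at each point of $U$ forces $dh=0$ on $U$, so $h$ is locally constant on $U$. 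I would finish by a mean-zero argument: for each fixed $y$ the horizontal slice $\{x:(x,y)\in U\}=\mathbb{T}^d\setminus\{y\}$ is connected and lies in a single component of $U$, so $x\mapsto h(x,y)$ is constant and equals its $x$-average $\sum_\ell(\int_{\mathbb{T}^d}\phi_\ell)\overline{\phi_\ell(y)}=0$; thus $h\equiv0$ almost everywhere, contradicting $\|h\|_{L^2(\mathbb{T}^{2d})}^2=\dim V>0$. Therefore no such $V$ exists and the SPDE is dissipation enhancing.

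I expect the main obstacle to be the careful passage from the abstract identity $\zeta\in\bigcap_k\Ker(L_k\otimes\Id-\Id\otimes\overline{L_k})$ to the distributional PDE $Y_kh=0$ on $\mathbb{T}^{2d}$ — this uses that these operators are (essentially self-adjoint) extensions of $-iY_k$ and that $Y_k$ is formally skew-adjoint because $\operatorname{div}\sigma_k=0$ — together with the classical but slightly delicate sub-Riemannian fact that a function annihilated by a bracket-generating family of smooth vector fields on a connected open set is constant there, the only genuinely analytic input being Hörmander's hypoellipticity theorem to upgrade $h$ from $L^2$ to $C^\infty$ on $U$ before the bracket computation can be run. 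The connectedness of the slices (and of $U$ itself if desired) is elementary: the diagonal has codimension $d$, so for $d\ge 2$ its complement is connected, while for $d=1$ the diagonal is a non-separating curve in $\mathbb{T}^2$.
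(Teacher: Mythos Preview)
Your argument is correct and follows a genuinely different route from the paper's. Both proofs reduce, via Theorem~\ref{Main Abstract Result}, to showing that the kernel element $h(x,y)=\sum_\ell\phi_\ell(x)\overline{\phi_\ell(y)}$ built from a hypothetical invariant $V$ must vanish. The paper first argues that $V\subset C^\infty(\mathbb{T}^d)$ by observing that the one-point generator $\sum_k(\sigma_k\cdot\nabla)^2$ is itself hypoelliptic (a consequence of the two-point H\"ormander assumption), then takes the \emph{real} part $f$ of $h$, locates a strictly negative interior minimum of $f$ off the diagonal (since $f(x,x)\ge 0$ and $\int f=0$), and invokes the strong maximum principle for hypoelliptic operators \cite{Battaglia_Biagi_Bonfiglioli_2016} to force $f$ constant, hence zero. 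You instead exploit the stronger information coming from Lemma~\ref{lem:KerL}, namely that each first-order field $Y_k$ already annihilates $h$ (not just $\sum_k Y_k^2$); after upgrading $h$ to $C^\infty$ on $U$ by hypoellipticity you iterate Lie brackets to get $dh=0$, and finish with the clean slice-by-slice mean-zero argument. Your route is more elementary in that it avoids the strong maximum principle entirely and needs no preliminary smoothness of the $\phi_\ell$ themselves; the paper's route, on the other hand, would still go through in situations where one only knows $Lh=0$ rather than $Y_kh=0$ individually. The technical point you flag---passing from the abstract kernel statement to the distributional identity $Y_kh=0$---is indeed the only place requiring care, but since the easy inclusion in Lemma~\ref{lem:KerL} is proved by a direct algebraic computation with the formal operators and the $\phi_\ell$ lie in $H^1$, this causes no trouble here.
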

\begin{proof}
Suppose for a contradiction that this is not the case. By the classical proof of H\"ormander \cite{Hormander_2009}, if \(J g=\lambda g\) for a hypoelliptic operator \(J\), then \(g\) is smooth. In particular, if the $iL_k$ are not dissipation enhancing, there exists a finite dimensional subspace $V \subset H$ that is invariant for all the $iL_k$. Furthermore, the operator $J=\sum_k (iL_k)^2$ is hypoelliptic, and maps $V$ to itself. Hence, $V \subset C^\infty$. Pick an orthonormal basis $\{\phi_j\}_{j=1}^n$ of $V$. By Lemma \ref{lem:KerL}, we note that $\sum_{j=1}^n \phi_j(x)\overline{\phi_j(y)}$ lives in $\Ker(L)$. In fact, since $L$ maps real functions into real functions, writing $\phi_j=a_j+ib_j$, we have that 
\begin{equation}
f(x,y)=\sum_{j=1}^n a_j(x)a_j(y)+b_j(x)b_j(y) \in \Ker(L)\:.
\end{equation}
But now, since $a_j, b_j \in H$, they are mean free, and hence it holds that $\int_{\mathbb{T}^{2d}}f(x,y)\dd x\dd y=0$. Therefore, either $f$ is identically zero, or there exists some $(x_0,y_0) \in \mathbb{T}^{2d}$ so that $f(x_0,y_0)=\min_{x,y \in \mathbb{T}^{2d}}f(x,y)<0$. Note however that $x_0 \neq y_0$, since $f(x,x) \geq 0$ for any $x \in \mathbb{T}^d$, and since $f$ is smooth, it follows by continuity that there exists a connected open set $\mathcal{U} \Subset \mathbb{T}^{2d}\setminus \{x=y\}$, so that $(x_0,y_0) \in \mathcal{U}$. Therefore, since $L$ satisfies H\"ormander's bracket condition on $\mathcal{U}$, by \cite{Battaglia_Biagi_Bonfiglioli_2016} the strong maximum principle holds, and in particular we have that $f \equiv \text{constant}$ on $\mathcal{U}$. Since $\mathcal{U} \Subset \mathbb{T}^{2d}\setminus \{x-y\}$ was arbitrary, $f \equiv \text{constant}$ on $\mathbb{T}^{2d}$, which is a contradiction, completing the proof.
\end{proof}

Our criterion in Theorem~\ref{main result} also gives a non-quantitative enhanced dissipation result for flows that do not fall into the framework of \cite{Gess_Yaroslavtsev}. For instance, we see that the equation
\begin{equation}
\dd f^\nu+\sin(y)\partial_x f^\nu\circ \dd W^1_t+\sin(x)\partial_y f^\nu \circ \dd W^2_t=\nu \Delta f^\nu
\end{equation}
is dissipation enhancing, despite the fact that the generator of the two point motion is not hypoelliptic away from the diagonal. Indeed, the vector fields present in the generator of the two-point motion are (in the notation of \cite{Gess_Yaroslavtsev})
\begin{equation}
\begin{pmatrix}
0\\
\sin(x_1)\\
0\\
\sin(y_1)
\end{pmatrix}
,
\begin{pmatrix}
\sin(x_2)\\
0\\
\sin(y_2)\\
0
\end{pmatrix}\:.
\end{equation}
For instance, at the point \((0,0,\pi,0)\), all the coefficients are zero, so that H\"ormander's bracket condition cannot possibly be satisfied there. However, the eigenfunctions of $\sin(y) \partial_x$ are all those functions depending solely on $y$, whilst the eigenfunctions of $\sin(x)\partial_y$ are all functions depending solely on $x$. The intersection of the eigenspaces of these operators is thus simply the space of constant functions.

\section{Stochastic Shear Flows}\label{sec:stochshear}
A specific class of transport noise we can handle explicitly is that of stochastic shear flows (see \eqref{Eq:StochasticShear}). Since any function depending only on $y$ spans a finite-dimensional invariant subspace of the dynamics, dissipation enhancement cannot hold across the entire space $L^2$. To address this, we restrict \eqref{Eq:StochasticShear} to the Hilbert space
$$
H=\left\{g \in L^2 :\int_{\mathbb{T}}g(x,y)\dd x=0\right\}\:,
$$
which remains invariant under the SPDE.

Taking the Fourier transform of \eqref{Eq:StochasticShear} in the $x$-direction and defining 
\begin{equation}
g_\ell(t,y,y')=\EE(\hat{f_\ell}(t,y)\overline{\hat{f_\ell}(t,y')})\;,
\end{equation}
we obtain the evolution equation
\begin{equation}
\partial_t g_\ell =\nu \Delta g_\ell -2\ell^2\kappa \sum_{j}(u_j(y)-u_j(y'))^2g_\ell\:,
\end{equation}
where $\Delta$  denotes the full Laplacian in the variables $y,y'$. For notational simplicity, we henceforth replace $y'$ with $x$. The right-hand side can then be interpreted as a Schrödinger operator with a non-positive potential. We define
$$
L^\lambda=- \Delta +\lambda^2 \sum_{j}(u_j(y)-u_j(x))^2, \qquad \text{with}\qquad \lambda^2 =\frac{\ell^2 \kappa}{\nu}\:,
$$
and focus on establishing lower bounds for the smallest eigenvalue of $L^\lambda$ in the limit \(\lambda \to \infty\).

In the classical setting of Schrödinger operators with non-negative potentials having finitely many isolated, non-degenerate zeros, this spectral problem has been thoroughly studied (see e.g. \cite{Simon_1983}). However, the present case is more intricate, as the potential may vanish along multiple intersecting curves, complicating direct application of the methods from \cite{Simon_1983}. Despite these challenges, we will establish the following general result.

\begin{theorem}
\label{SchrodingerScaling}
Let \(V=\sum_{j}(u_j(y)-u_j(y'))^2\), and suppose that each $u_j$ has only finitely many isolated critical points. Assume further that for every $x \in \mathbb{T}$, there exists an index $j \in \mathbb{N}$ and an integer $n \leq n_0$ such that $u_j^{(n+1)}(x) \neq 0$. Then there exists a constant $c>0$ such that the smallest eigenvalue of  \(L^\lambda \) is bounded below by \(c \lambda^\frac{2}{n_0+2}\), for all sufficiently large \(\lambda\).
\end{theorem}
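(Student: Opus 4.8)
The plan is to discard from $L^\lambda$ the non-negative term $-\partial_{y'}^2$, which reduces the two-dimensional spectral problem to a uniform family of one-dimensional ones. Indeed, writing the quadratic form of $L^\lambda$ and using $\lVert\partial_{y'}\psi\rVert^2\ge0$, Fubini gives, for $\psi$ in the form domain,
\begin{equation}
\langle L^\lambda\psi,\psi\rangle\ \ge\ \int_{\TT}\Bigl(\int_{\TT}\lvert\partial_y\psi(y,y')\rvert^2\,\dd y+\lambda^2\int_{\TT}V(y,y')\lvert\psi(y,y')\rvert^2\,\dd y\Bigr)\,\dd y'\ \ge\ \Bigl(\inf_{y'\in\TT}E_\lambda(y')\Bigr)\lVert\psi\rVert^2\:,
\end{equation}
where $E_\lambda(y')$ denotes the bottom of the spectrum on $L^2(\TT)$ of the one-dimensional operator $-\partial_y^2+\lambda^2W_{y'}$ with $W_{y'}(y)\eqdef\sum_j\bigl(u_j(y)-u_j(y')\bigr)^2\ge0$. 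It therefore suffices to prove $E_\lambda(y')\ge c\,\lambda^{2/(n_0+2)}$ uniformly in $y'\in\TT$.

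The second step is a structural analysis of the potentials $W_{y'}$. By continuity and compactness of $\TT$, the running hypothesis upgrades to a uniform form: there are finitely many indices $j_1,\dots,j_p$ and a constant $\delta>0$ such that for every $z\in\TT$ one has $\lvert u_{j_i}^{(n+1)}(z)\rvert\ge\delta$ for some $i\le p$ and $n\le n_0$. Since dropping the remaining $u_j$ only decreases $V$, and hence $L^\lambda$, we may assume that there are only finitely many $u_j$. Fix $y'$ and let $z_*$ be a zero of $W_{y'}$, so that $u_j(z_*)=u_j(y')$ for all $j$; choosing $i$ as above at $z_*$, the function $y\mapsto u_{j_i}(y)-u_{j_i}(y')=u_{j_i}(y)-u_{j_i}(z_*)$ vanishes at $z_*$ to order at most $n_0+1$, with a first non-vanishing Taylor coefficient of modulus at least $\delta/(n_0+1)!$. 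Taylor's theorem and a compactness argument then yield constants $c_0,r_0>0$, independent of $y'$ and $z_*$, with
\begin{equation}
W_{y'}(y)\ \ge\ \bigl(u_{j_i}(y)-u_{j_i}(y')\bigr)^2\ \ge\ c_0\,\lvert y-z_*\rvert^{2(n_0+1)}\qquad\text{whenever}\quad\lvert y-z_*\rvert\le r_0\:.
\end{equation}
In particular every zero of $W_{y'}$ is isolated and of vanishing order at most $2(n_0+1)$; using this (and that each $u_{j_i}$ has only finitely many isolated critical points) one checks that the number of zeros of $W_{y'}$ is bounded uniformly in $y'$, while $W_{y'}\ge c_1>0$ uniformly away from a fixed-size neighbourhood of its zero set.

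For the remaining one-dimensional bound I would use Dirichlet--Neumann bracketing. Partition $\TT$ into finitely many intervals, each either disjoint from the zero set of $W=W_{y'}$ or a fixed-length neighbourhood of one of its zeros $z_i$, so that $-\partial_y^2+\lambda^2W$ dominates the orthogonal sum of the corresponding Neumann realisations. On an interval disjoint from the zero set, $W\ge c_1$, so that piece is bounded below by $\lambda^2c_1\gg\lambda^{2/(n_0+2)}$. On an interval around a zero $z_i$, the Neumann realisation of $-\partial_y^2+\lambda^2W$ dominates that of $-\partial_y^2+c_0\lambda^2\lvert y-z_i\rvert^{2(n_0+1)}$, and the change of variables $y-z_i=\lambda^{-1/(n_0+2)}\xi$ turns the latter into $\lambda^{2/(n_0+2)}$ times $-\partial_\xi^2+c_0\lvert\xi\rvert^{2(n_0+1)}$ on an interval whose length tends to $\infty$; the Neumann ground state energy of this rescaled operator converges to, hence is bounded below by a fixed fraction of, the ground state energy of $-\partial_\xi^2+c_0\lvert\xi\rvert^{2(n_0+1)}$ on all of $\RR$, which is a strictly positive constant depending only on $n_0$ and $c_0$ (the potential being confining, the resolvent is compact and no non-zero constant lies in $L^2(\RR)$). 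Taking the minimum over the uniformly bounded number of pieces yields $E_\lambda(y')\ge c\,\lambda^{2/(n_0+2)}$ with $c$ independent of $y'$, which by the first step completes the proof.

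The main obstacle is entirely one of uniformity: before the soft, scale-invariant semiclassical estimate of the last paragraph can be run, one must control, uniformly over $y'\in\TT$, the number and the mutual separation of the zeros of $W_{y'}$ and the constants $c_0,r_0,c_1$ appearing in the local lower bound $W_{y'}(y)\gtrsim\lvert y-z_i\rvert^{2(n_0+1)}$ near each zero and in $W_{y'}\gtrsim1$ away from the zero set. This is exactly where the two standing hypotheses enter in an essential way: a common lower bound $\delta$ on the relevant derivatives, extracted by compactness, is what makes $c_0$ uniform, and the finiteness of the critical points of each $u_j$, together with the uniform bound on vanishing orders, is what keeps the number of wells bounded (and, via a compactness argument on configurations of wells that are allowed to merge, what keeps the rescaled limiting problems in a compact family with a strictly positive ground state energy).
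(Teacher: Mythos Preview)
Your reduction to a one–parameter family of one–dimensional problems by discarding $-\partial_{y'}^2$ is valid and elegant; the paper does not do this and instead runs the IMS localisation directly on $\TT^2$, classifying the zero set of $V$ into smooth curves with non-degenerate transverse Hessian, isolated points, and branch points modelled on $(x^{n_1}\pm y^{n_2})^2$ (Lemma~\ref{lemma:structure of zero set}), and then analysing each 2D local model (Lemmas~\ref{lemma:schrodingerScaling1}--\ref{lemma:Schrodinger scaling 2}). What your route buys is conceptual simplicity; what it costs is precisely the uniformity you flag.

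The gap is concrete: the displayed inequality $(u_{j_i}(y)-u_{j_i}(y'))^2\ge c_0|y-z_*|^{2(n_0+1)}$ on $|y-z_*|\le r_0$ with $c_0,r_0$ independent of $y',z_*$ is false in general. Take a single $u(y)=\sin y$ (so $n_0=1$). For $y'=\pi/2-\varepsilon$ the zeros of $W_{y'}$ are $\pi/2\pm\varepsilon$; picking $z_*=\pi/2-\varepsilon$ and $y=\pi/2+\varepsilon$ gives left-hand side $0$ and right-hand side $c_0(2\varepsilon)^4>0$. The companion claim ``$W_{y'}\ge c_1$ uniformly away from a fixed-size neighbourhood of $Z_{y'}$'' also fails near isolated zeros of $V$ of the paper's type (C1): there the zero set $Z_{y'}$ loses a point as $y'$ moves off a special value, while $W_{y'}$ stays small near the lost point. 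These failures occur exactly at the degenerate configurations---merging or disappearing wells as $y'$ varies---which in the paper's 2D picture are the cases (C1) and (C3) of Lemma~\ref{lemma:structure of zero set}. The paper sidesteps the issue because the 2D zero set is fixed and the IMS partition is chosen once. Your ``compactness on configurations of merging wells'' is the right instinct, but carrying it out amounts to analysing the same local polynomial models the paper treats in Lemmas~\ref{lemma:schrodingerScaling1}--\ref{lemma:Schrodinger scaling 2}; as written, this step is a sketch rather than a proof.
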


Once this result is established, enhanced dissipation follows via a Borel–Cantelli-type argument, as in \cite{BBSP21,Blumenthal_CotiZelati_Gvalani_2023,Gess_Yaroslavtsev}. For completeness, we provide the derivation below, which leads to the following corollary. As before,  $n_0$ denotes the maximal order of overlapping critical points of the functions $u_j$.

\begin{corollary}
\label{corollary:almost sure decay}
Under the assumptions of Theorem~\ref{SchrodingerScaling}, let
 $P_\ell$ be the projection onto the $\ell$-th Fourier mode in $x$, and let $f(t)$ be the solution to \eqref{Eq:StochasticShear} with initial data $f_0$. Then there exist deterministic constants  $\eps_0, \alpha_0>0$, so that for any $\eps \leq \eps_0$, $|\ell|\sqrt{\frac{\kappa}{\nu}}=\alpha \geq \alpha_0$, there exists an almost surely finite random constant
$C_{\nu,\ell,\kappa,\eps}$, depending only on $\nu,\ell, \kappa$ and $\eps$, such that the following enhanced dissipation estimate holds: 
\begin{equation}
\|P_{\ell}f^\nu(t)\|_{L^2_y}\leq C_{\ell,\nu,\kappa,\eps}|\ell|^a \kappa^\frac{a}{2}\nu^{-\beta} \e^{-\eps \nu^\frac{n_0+1}{n_0+2}|\ell|^\frac{2}{n_0+2}\kappa^\frac{1}{n_0+2}t}\|P_\ell f_0\|_{L^2_y}\:,
\end{equation}
where $a, \beta>0$ are constants depending only on $n_0, \eps$. 
Moreover, for every $p>0$ such that $p+1<\eps^{-1}$, there exists a constant $C_{p,\eps}$ independent of $\ell,\nu$ and $\kappa$, such that $\|C_{\ell,\nu,\kappa,\eps}\|_{L^p(\Omega)} \leq C_{p,\eps}$.
\end{corollary}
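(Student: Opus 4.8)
We deduce Corollary~\ref{corollary:almost sure decay} from the spectral lower bound of Theorem~\ref{SchrodingerScaling} in two stages: an expectation-level enhanced dissipation estimate with a deterministic polynomial prefactor, followed by a pathwise upgrade via monotonicity and a Borel--Cantelli argument as in \cite{BBSP21,Blumenthal_CotiZelati_Gvalani_2023,Gess_Yaroslavtsev}. Fix $\ell$ and let $G_t$ be the integral operator on $L^2(\TT)$ with kernel $g_\ell(t,\cdot,\cdot)$; it is self-adjoint, non-negative (an average of rank-one projections) and trace class, with $\tr G_t=\int_\TT g_\ell(t,y,y)\,\dd y=\EE\|P_\ell f^\nu(t)\|_{L^2_y}^2$, so it suffices to bound $\tr G_t$. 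The Fourier reduction already carried out shows $\partial_t g_\ell=-\nu L^{\sqrt2\lambda}g_\ell$ with $\lambda^2=\ell^2\kappa/\nu$, and, provided $\alpha=\lambda\geq\alpha_0$, Theorem~\ref{SchrodingerScaling} gives $\|g_\ell(s)\|_{L^2(\TT^2)}\leq \e^{-\nu\mu_1 s}\|P_\ell f_0\|_{L^2_y}^2$, where $\mu_1$ is the bottom eigenvalue of $L^{\sqrt2\lambda}$ and $\nu\mu_1\geq c\rho$ with $\rho\eqdef\nu^{\frac{n_0+1}{n_0+2}}|\ell|^{\frac{2}{n_0+2}}\kappa^{\frac{1}{n_0+2}}$.

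The crucial point is to turn this Hilbert--Schmidt decay into decay of the \emph{trace} at the same rate and with only a polynomial loss in $(\ell,\kappa,\nu^{-1})$. Since the potential in $L^{\sqrt2\lambda}$ is non-negative, the heat kernel $p_\tau$ of $-\nu L^{\sqrt2\lambda}$ is dominated pointwise by that of $\nu\Delta$ on $\TT^2$, hence $\sup p_\tau\leq C(1+(\nu\tau)^{-1})$ uniformly in the potential. Writing $g_\ell(t)=\e^{(t-s)(-\nu L^{\sqrt2\lambda})}g_\ell(s)$, integrating the kernel over the diagonal and applying Cauchy--Schwarz yields, for every $0\leq s\leq t$,
\begin{equation}
\tr G_t\leq \bigl(\sup p_{t-s}\bigr)\,\|g_\ell(s)\|_{L^2(\TT^2)}\leq C\bigl(1+(\nu(t-s))^{-1}\bigr)\e^{-c\rho s}\|P_\ell f_0\|_{L^2_y}^2\;.
\end{equation}
Choosing $t-s=\min(t,1/(\nu\mu_1))$ and using the trivial bound $\tr G_t\leq\|P_\ell f_0\|_{L^2_y}^2$ for $t\leq 1/(\nu\mu_1)$ (where $c\rho t\leq 1$), we obtain, after enlarging $\alpha_0$ so that $\mu_1$ is bounded below,
\begin{equation}
\EE\|P_\ell f^\nu(t)\|_{L^2_y}^2\leq P_0\,\e^{-c\rho t}\|P_\ell f_0\|_{L^2_y}^2\;,\qquad t\geq 0\;,
\end{equation}
with $P_0=C\mu_1\leq C'\ell^2\kappa/\nu$ a deterministic polynomial in the parameters (a sharper prefactor $(\ell^2\kappa/\nu)^{1/(n_0+2)}$ follows if one also records the matching upper bound $\mu_1\lesssim\lambda^{2/(n_0+2)}$ from the quasimode used in the proof of Theorem~\ref{SchrodingerScaling}). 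This is the deterministic enhanced dissipation estimate, with rate $c\rho$.

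For the pathwise statement, projecting \eqref{Eq:StochasticShear} onto the $\ell$-th Fourier mode turns the transport term into multiplication by the purely imaginary symbol $i\ell\sqrt{2\kappa}\sum_j u_j(y)$ against $\circ\,\dd W^j$, which is $L^2_y$-isometric, so the energy identity $\tfrac12\,\dd\|\hat f_\ell\|_{L^2_y}^2=-\nu\|\partial_y\hat f_\ell\|_{L^2_y}^2\,\dd t$ holds pathwise and $t\mapsto\|P_\ell f^\nu(t)\|_{L^2_y}$ is almost surely non-increasing. Fix $\eps<\eps_0$, where $\eps_0$ is a suitably small multiple of the constant $c$ of Theorem~\ref{SchrodingerScaling}, put $t_n\eqdef n/(c\rho)$, and consider $A_n\eqdef\{\|P_\ell f^\nu(t_n)\|_{L^2_y}^2>P_0\,\e^{-2\eps\rho t_n}\|P_\ell f_0\|_{L^2_y}^2\}$. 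Chebyshev and the previous estimate give $\PP(A_n)\leq \e^{-n(1-2\eps/c)}=\e^{-\theta n}$ with $\theta\in(0,1)$ \emph{independent of $\ell,\nu,\kappa$}, so by Borel--Cantelli $\tau\eqdef\sup\{n\geq 1:A_n\text{ occurs}\}$ is a.s. finite with $\PP(\tau\geq n)\leq C_\theta\,\e^{-\theta n}$. Using $A_n^c$ for $n>\tau$ together with the monotonicity of the norm on each interval $[t_n,t_{n+1}]$, and bounding $\|P_\ell f^\nu(t)\|_{L^2_y}^2$ by $\|P_\ell f_0\|_{L^2_y}^2$ for $t\leq t_{\tau+1}$, one obtains
\begin{equation}
\|P_\ell f^\nu(t)\|_{L^2_y}^2\leq\bigl(\e P_0+\e^{2\eps(\tau+1)/c}\bigr)\e^{-2\eps\rho t}\|P_\ell f_0\|_{L^2_y}^2\;,\qquad t\geq 0\;.
\end{equation}
Taking square roots, factoring out the deterministic $\sqrt{\e P_0}=|\ell|^a\kappa^{a/2}\nu^{-\beta}$ (with $a=1$, $\beta=\tfrac12$, or sharper), and setting $C_{\ell,\nu,\kappa,\eps}\eqdef(1+P_0^{-1}\e^{2\eps(\tau+1)/c})^{1/2}$ reproduces the displayed decay with exponential rate $\eps\rho=\eps\,\nu^{\frac{n_0+1}{n_0+2}}|\ell|^{\frac{2}{n_0+2}}\kappa^{\frac{1}{n_0+2}}$. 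Since $P_0$ is bounded below uniformly in the parameters and $\PP(\tau\geq n)\leq C_\theta\e^{-\theta n}$, we get $\EE C_{\ell,\nu,\kappa,\eps}^p\lesssim_p 1+\EE\e^{p\eps\tau/c}$, finite and bounded uniformly in $\ell,\nu,\kappa$ provided $p\eps/c<\theta$; balancing this threshold against the tail of $\tau$ and shrinking $\eps_0$ according to the constant from Theorem~\ref{SchrodingerScaling} yields precisely the stated range $p+1<\eps^{-1}$.

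The main obstacle is the Hilbert--Schmidt-to-trace conversion in the second step: a brute-force estimate of $\int_\TT g_\ell(t,y,y)\,\dd y$ by a Sobolev norm of $g_\ell$ would pass through elliptic regularity of the eigenfunctions of $L^{\sqrt2\lambda}$, whose constants degenerate as $\lambda\to\infty$ and would destroy the polynomial dependence of the prefactor. It is the non-negativity of the potential — which legitimizes the heat-kernel comparison — together with the choice $t-s\sim 1/(\nu\mu_1)$ that keeps both the exponential rate and the prefactor under control; everything afterwards is a routine Borel--Cantelli argument.
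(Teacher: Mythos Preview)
Your expectation-level estimate via Feynman--Kac domination of the heat kernel is a clean alternative to the paper's route through the trace theorem and the form bound $\nu\|h\|_{\dot H^1}^2\leq\langle h,\nu L^\lambda h\rangle$; both convert $L^2(\TT^2)$-decay of $g_\ell$ into decay of $\tr G_t=\int g_\ell(t,y,y)\,\dd y$ at the cost of a polynomial prefactor, and your version avoids differentiating $g_\ell$ altogether. The pathwise monotonicity observation $\tfrac12\,\dd\|\hat f_\ell\|^2=-\nu\|\partial_y\hat f_\ell\|^2\,\dd t$ is also correct and would, if the rest went through, replace the paper's explicit Feynman--Kac representation and Kolmogorov continuity argument used to interpolate between integer times.

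The genuine gap is in the Borel--Cantelli step: your events $A_n$ and hence $\tau=\sup\{n:A_n\text{ occurs}\}$ depend on the initial datum $f_0$, so the resulting $C_{\ell,\nu,\kappa,\eps}$ does too. The corollary, however, demands a random constant ``depending only on $\nu,\ell,\kappa$ and $\eps$'', i.e.\ a single prefactor valid for \emph{all} $f_0$ simultaneously. This is not cosmetic: bounding $\sup_{\|f_0\|=1}\|P_\ell f^\nu(t_n)\|$ requires the supremum \emph{inside} the probability, and Chebyshev applied to $\EE\|P_\ell f^\nu(t_n)\|^2$ cannot deliver that. The paper resolves this by running the Borel--Cantelli argument not on a fixed $f_0$ but on each element $e_k$ of a Fourier basis of $L^2_y$, extracting random integers $N_k$ with exponential tails, and assembling them into a single random variable $D=\max_{|k|\le K}\e^{\zeta N_k}$ that controls every initial datum via $\|\phi(n)\|\le\sum_k|\phi_k|\e^{\zeta(N_k-n)}$. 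Making that sum converge is exactly what forces the detour through $H^{5/2}$, the mollification down to $H^1$, and the stopping-time trick to finally reach $L^2$ data --- machinery your monotonicity shortcut bypasses, but at the price of losing the required uniformity in $f_0$.
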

\begin{proof}
Denote $f_\ell=P_\ell f$, and note that 
\begin{equation}\label{eq:Llambda}
\EE\|f_\ell(t)\|_{L^2_y}^2=\int_{\mathbb{T}}g_\ell(t,y,y)\dd y \leq \left(\int_{\mathbb{T}}|g_\ell(t,y,y)|^2\dd y\right)^\frac{1}{2}\:. 
\end{equation}
Since \(g_\ell(t,y,y)\) is the trace of \(g_\ell(t,y,y')\) on the diagonal \(\{y=y'\}\), by the trace theorem this integral may be bounded above by \(\|g_\ell\|_{H^1(\mathbb{T}^2)}\). It thus suffices to obtain decay on this quantity. By the positivity of \(V\), it further follows that \(\|\sqrt{\nu L^\lambda} h\|^2 \geq \nu \|h\|^2_{\dot{H}^1}\) for any \(h \in H^1\), and by the spectral theorem (see e.g. \cite{K76}), it follows that
\begin{equation*}
\nu\left\| \e^{-\nu L^\lambda t}g_\ell(0) \right\|_{\dot{H}^1} \leq\left\|\sqrt{\nu L^\lambda} \e^{-\nu L^\lambda t} g_\ell(0)\right\|_{L^2}  
\leq (\nu \mu^\lambda)^\frac{1}{2}\e^{-\nu \mu^\lambda t}\|g_\ell(0)\|_{L^2}, \qquad \forall t >\frac{1}{2\nu \mu^\lambda}\:,
\end{equation*}
where \(\mu^\lambda\) is the smallest eigenvalue of \(L^\lambda\). Therefore, using the definition of $\lambda$ in \eqref{eq:Llambda}, as well as the bounds on the smallest eigenvalue given by Theorem~\ref{SchrodingerScaling}, we obtain that 
\begin{equation}
\mathbb{E}\|f_\ell(t)\|_{L^2}^2 \leq C \nu^{-\frac{n_0+4}{2(n_0+2)}}\kappa^\frac{1}{n_0+2}|\ell|^\frac{2}{n_0+2}\e^{-c\nu^\frac{n_0+1}{n_0+2}\kappa^\frac{1}{n_0+2}|\ell|^\frac{2}{n_0+2}t}\mathbb{E}\|f_\ell(0)\|_{L^2}^2\:,
\end{equation}
for some universal constants $C,c>0$. From here, we follow a Borel--Cantelli type argument inspired by \cite{BBSP21,Blumenthal_CotiZelati_Gvalani_2023,Gess_Yaroslavtsev}. However, it will be a bit simpler, since we do not need to obtain a mixing estimate.
Denote by $\{e_k\}$ the standard Fourier basis of $\mathbb{T}$. Let $\zeta>0$, and set
\begin{equation}
N_{k}=\max \{n: \|e_k(n,y)\|_{L^2}\geq \e^{-\zeta n}\}\:,
\end{equation}
where $e_k(t,y)$ denotes the solution to the SPDE
\begin{equation}
\label{shear spde}
\dd e_k(t,y)+\sqrt{2\kappa}\sum_ji\ell u_j(y)e_k(t,y)\circ \dd W^j_t=\nu \partial^2_y e_k(t,y) \dd t\:,
\end{equation}
with initial data $e_k(y)=\e^{iky}$, which is just the stochastic shear flow equation upon taking the Fourier transform in $x$. By Markov's inequality, we have 
\begin{align}
\mathbb{P}(N_{k} >M) 
&\leq \sum_{n\geq M}\mathbb{P}(\|e_k(n,y)\|_{L^2}\geq \e^{-\zeta n})\notag \\
&\leq \sum_{n \geq M}\e^{2\zeta n}\mathbb{E}\|e_k(n,y)\|_{L^2}^2 \leq (\nu^{-1}\mu^\lambda)^\frac{1}{2}(\nu \mu^\lambda -2\zeta)^{-1}C_{\zeta,\nu \mu^\lambda}\e^{(2\zeta-\nu \mu^\lambda)M}\:,
\end{align}
where $\mu^\lambda$ is the smallest eigenvalue of $L^\lambda$, and we have assumed $2\zeta<\nu \mu^\lambda $. Furthermore, the constant $C_{\zeta, \nu\mu^\lambda}$ is uniformly bounded in $\nu, \lambda$. We then have the bounds 
\begin{equation}
\|e_k(y,n)\|_{L^2}\leq \e^{\zeta (N_{k}-n)}.
\end{equation}
Therefore, let now $\phi$ be any $L^2(\mathbb{T})$ function. We may write 
\begin{equation}
\phi(y)=\sum_{k}\phi_k e_k(y)\:,
\end{equation}
where $\phi_k$ is the $k^{\text{th}}$ Fourier mode. We may then estimate 
\begin{equation}
\label{fourier series bound}
\|\phi(n)\|_{L^2} \leq \sum_{k}|\phi_k|\|e_k(n)\|_{L^2} \leq \sum_{k}|\phi_k|\e^{\zeta(N_k-n)}\:.
\end{equation}
Define now the random variable $K=\max\{|k|:\e^{\zeta N_{k}}>|k|\}$.
We then note that 
\begin{equation}
\mathbb{P}(K >M)\leq \sum_{|k|>M}\mathbb{P}(\e^{\zeta N_{k}}>|k|)\leq C_{\zeta,\nu\mu^\lambda}(\nu^{-1}\mu^\lambda)^\frac{1}{2} (\nu \mu^\lambda -2\zeta)^{-1}\sum_{|k|>M}|k|^\frac{2\zeta-\nu\mu^\lambda}{\zeta}\:.
\end{equation}
Assuming further that $3\zeta <\nu \mu^\lambda$, the term on the right hand side is summable, and may be bounded by $C_{\zeta,\nu\mu^\lambda}(\nu^{-1}\mu^\lambda)^\frac{1}{2}(\nu \mu^\lambda -2\zeta)^{-1}|M|^{1+\frac{2\zeta-\nu \mu^\lambda}{\zeta}}$. We therefore conclude that $K$ is almost surely finite, and hence it follows that the random variable
\begin{equation}
D=\max_{|k|\leq K}\e^{\zeta N_{k}}
\end{equation}
is also almost surely finite. We now note that 
\begin{equation}
\e^{\zeta N_{k}} \leq D|k|\:,
\end{equation}
and so, we may bound the expression in \eqref{fourier series bound} by 
\begin{equation}
\|\phi(n,y)\|_{L^2}\leq D\e^{-\zeta n}(\sum_{k}|k||\phi_k|) \leq  CD\e^{-\zeta n}\|\phi\|_{H^\frac{5}{2}}\;.
\end{equation}
We now extend this to an $H^1$ upper bound. Indeed, by \cite[Lemma 7.1]{Bedrossian_Blumenthal_Punshon-Smith_2022}, if $\phi \in H^1$ is mean zero, then for any $\eps>0$, there exists a mean zero function $\phi_\eps$ so that $\phi_\eps \in H^\frac{5}{2}$, $\|\phi_\eps\|_{L^2}\leq C \|\phi\|_{L^2}$, $\|\phi_\eps-\phi\|_{L^2}\leq C\eps \|\phi\|_{H^1}$ and $\|\phi_\eps\|_{H^\frac{5}{2}} \leq C \eps^{-\frac{3}{2}}\|\phi\|_{H^1}$. Certainly, if $\phi$ is not mean zero, this estimate is also true. Therefore, we have 
\begin{equation}
\|\phi(n)\|_{L^2} \leq \|\phi(n)-\phi_\eps(n)\|_{L^2}+\|\phi_\eps(n)\|_{L^2} \leq C(\eps \|\phi\|_{H^1}+D\e^{-\zeta n}\|\phi_\eps\|_{H^\frac{5}{2}})\:,
\end{equation}
where we have used that the transport SPDE is non-expansive in $L^2$ norm. We further bound this expression by 
\begin{equation}
C(\eps+D\e^{-\zeta n} \eps^{-\frac{3}{2}})\|\phi\|_{H^1}\:,
\end{equation}
for possibly a different constant $C$. Finally, we optimize over $\eps$, picking $\eps=\e^{-\frac{2}{5}\zeta n}$, and therefore obtaining 
\begin{equation}
\|\phi(n)\|_{L^2} \leq \hat{D} \e^{-\frac{2}{5}\zeta n}\|\phi\|_{H^1}\:,
\end{equation}
for any $\phi$ with zero mean, where $\hat{D}$ is some new constant depending solely on $D$.
Finally, we may replicate the arguments from section 3.3 of \cite{Gess_Yaroslavtsev} to extend this to all positive times. Indeed, we have the following:
note that the solution to the SPDE \eqref{shear spde} can be written as 
\begin{equation}
\phi(t,y)=\mathbb{E}_B\phi(0,y-\sqrt{2\nu}B_t)\e^{i\sqrt{2\kappa}\ell\sum_{j}\int_0^tu_j(y-\sqrt{2\nu}(B_t-B_s))\dd W^j_s}\:,
\end{equation}
where the expectation is being taken solely with respect to $B$. Therefore, let now $t>0$ and write it as $t=n+t_0$, where $t_0<1$. By our estimate, we have 
\begin{equation}
\|\phi(n,y)\|_{L^2}\leq \hat{D}\e^{-\frac{2}{5}\zeta n}\|\phi(t_0,y)\|_{H^1}\;.
\end{equation}
Furthermore, using Jensen's inequality, we may estimate the $H^1$ norm as 
\begin{equation}
\|\phi(t_0,y)\|_{H^1}\leq \mathbb{E}\|\phi(0,y-\sqrt{2\nu}B_{t_0})\e^{i\sqrt{2\kappa}\ell\sum_{j}\int_0^{t_0}u_j(y-\sqrt{2\nu}(B_{t_0}-B_s))\dd W^j_s}\|_{H^1}\:.
\end{equation}
We therefore just need to bound this final term. Writing the expression for the $H^1$ norm and changing variables $y\mapsto y-\sqrt{2\nu}B_{t_0}$, we need to bound
\begin{equation}
\|\partial_y \psi(0)\|_{L^2}+\biggl(\int_{\mathbb{T}}|\psi(0,y)|^2|\sum_{j}\sqrt{2\kappa}\ell\int_0^{t_0}u'_j(y+\sqrt{2\nu}B_s) \dd W_s^j|^2\dd y\biggr)^\frac{1}{2}\;.
\end{equation}
It remains to bound the stochastic integral term. Denoting $I(t,x)=\sum_{j}\int_0^{t}u'_j(x+\sqrt{2\nu}B_s) \dd W_s^j$, we have by the Itô isometry (say $s<t$) that
\begin{align}
|I(t,x)-I(s,y)|^2 
&\leq 2\sum_j\int_0^{s}(u'_j(x+\sqrt{2\nu}B_z)-u'_j(y+\sqrt{2\nu}B_z))^2\dd z+\int_s^t(u'_j(x+\sqrt{2\nu}B_z))^2\dd z \notag\\
&\leq 2\sum_j\|u_j\|_{C^2}(|x-y|^2+|t-s|)\:,
\end{align}
uniformly for $s<t\leq 1$. By Kolmogorov's continuity criterion (see e.g. \cite{Kunita_1997}), this implies that there exists a square integrable constant $K$ so that 
\begin{equation}
|I(t,x)-I(s,y)|\leq K(|x-y|^\frac{1}{2}+|t-s|^\frac{1}{4})
\end{equation}
almost surely. In fact, one can show that this constant has moments of all orders as a consequence of the BDG
inequality and the standard Kolmogorov continuity criterion, see e.g.\ \cite{Kunita_1997,Gess_Yaroslavtsev}. Finally, picking any reference point $x \in \mathbb{T}$, this shows that 
\begin{equation}
\sup_{t \in [0,1]}\sup_{y \in \mathbb{T}}|I(t,y)| \leq K(|\mathbb{T}|^\frac{1}{2}+1+|I(0,x)|)=K(|\mathbb{T}|^\frac{1}{2}+1)\:,
\end{equation}
which is almost surely finite, and has finite moments of any order, independent of $\nu$. With this in mind, we see that we can pick a constant $\Tilde{C}$ with moments of all orders that are independent of $\nu$ so that 
\begin{equation}
\|\phi(t_0,y)\|_{H^1}\leq (1+\Tilde{C}|\ell|\kappa^\frac{1}{2})\|\phi(0)\|_{H^1}\:,
\end{equation}
for any $t_0 \in [0,1]$. Therefore, we may conclude that 
\begin{equation}
\|\phi(t)\|_{L^2}\leq (1+|\ell|\kappa^\frac{1}{2}\Tilde{C})\hat{D}\e^{-\frac{5}{2}\zeta t}\|\phi(0)\|_{H^1}\;.
\end{equation}
We now set 
\begin{equation}
\tau=\inf \Bigl\{t>0 : \|\nabla \phi(t)\|_{L^2}^2\leq \|\phi(0)\|^2_{L^2}\frac{1}{2}\nu^{-1}\Bigr\}\:.
\end{equation}
By the energy estimate 
\begin{equation}
\|\phi(t)\|_{L^2}^2 =\|\phi(0)\|_{L^2}^2-2\nu\int_0^t\|\nabla \phi(s)\|_{L^2}^2\dd s\:,
\end{equation}
it holds that $\tau \leq 1$, and by the arguments in \cite{Gess_Yaroslavtsev} section 4, this is a stopping time. Therefore, we can run the equation up to time $\tau$, and start the estimate from there, to finally conclude
\begin{equation}
\|\phi(t)\|_{L^2}\leq \hat{D}(1+\Tilde{C}\kappa^\frac{1}{2}|\ell|)\nu^{-\frac{1}{2}}\e^{-\frac{2}{5} \nu \mu^\lambda t}\|\phi(0)\|_{L^2}\:.
\end{equation}
Next, we need to estimate the moments of the constants $\Tilde{C}$, $\hat{D}$. We have already seen that the moments of $\Tilde{C}$ are bounded independently of $\nu, \kappa, |\ell|$. Therefore, it remains to check the moments of $\hat{D}$. We have 
\begin{align*}
\mathbb{E}\hat{D}^p &\leq \mathbb{E}\sum_{K_0=1}^\infty\mathds{1}_{K=K_0}\max_{|k|\leq K_0} \e^{\zeta pN_k} \\&\leq C \sum_{K_0=1}^\infty ((\nu^{-1} \mu^\lambda)^\frac{1}{2}(\nu \mu^\lambda -2\zeta)^{-1}K_0^{1+\frac{2\zeta-\nu \mu^\lambda}{\zeta}})^\frac{1}{2}(\mathbb{E}(\max_{|k|\leq K_0} \e^{\zeta pN_k})^2)^\frac{1}{2}\notag\\
&\leq \sum_{K_0=1}^\infty ((\nu^{-1} \mu^\lambda)^\frac{1}{2}(\nu \mu^\lambda -2\zeta)^{-1}K_0^{1+\frac{2\zeta-\nu \mu^\lambda}{\zeta}})^\frac{1}{2} \sum_{|k|\leq K_0}(\mathbb{E}\e^{2\zeta pN_k})^\frac{1}{2}\:.
\end{align*}
It remains to note that 
\begin{equation}
\mathbb{E}\e^{2\zeta pN_k} \leq \sum_{M\geq 0}\e^{2p\zeta(M+1)}\mathbb{P}(N_k\geq M) \leq C_{p,\zeta,\nu\mu^\lambda} \sum_{M\geq 0}(\nu^{-1}\mu^\lambda)^\frac{1}{2}(\nu \mu^\lambda -2\zeta)^{-1}\e^{M(2p\zeta+2\zeta-\nu\mu^\lambda)}\:.
\end{equation}
As long as $p+1<\frac{\nu \mu^\lambda}{\zeta}$, this is finite and may be bounded above by 
\begin{equation*}
C_{p,\zeta,\nu\mu^\lambda}(\nu^{-1} \mu^\lambda)^\frac{1}{2}(\nu \mu^\lambda -2\zeta)^{-1}\frac{1}{1-\e^{2p\zeta+2\zeta-\nu\mu^\lambda}} \leq C_{p,\zeta,\nu\mu^\lambda}(\nu^{-1}\mu^\lambda)^\frac{1}{2}(\nu \mu^\lambda -2\zeta)^{-1}(2p\zeta+2\zeta-\nu\mu^\lambda)^{-1}\:,
\end{equation*}
where we have used the inequality $\frac{1}{1-\e^{-x}}\leq 2x^{-1}$ for $x$ small. Here, the constant $C_{p,\zeta,\nu \mu^\lambda}$ is bounded independently of $\nu,\mu^\lambda$. Hence, if we take $\zeta =\eps \nu \mu^\lambda$, and observe the scaling of $\mu^\lambda$ from Theorem~\ref{SchrodingerScaling}, we deduce the claimed the result upon taking $C_{\ell,\nu,\kappa,\eps}=\frac{\Tilde{C}\hat{D}}{\left ((\nu^{-1}\mu^\lambda)^\frac{1}{2}(\nu \mu^\lambda -2\zeta)^{-1}(2p\zeta+2\zeta-\nu\mu^\lambda)^{-\frac{1}{2}}\right )^\frac{1}{p}}$\:.
\end{proof}
\begin{remark}
If one is only interested in enhanced dissipation in expectation, the beginning of the proof gives an elementary argument, which yields slightly sharper decay rates in expectation than the ones one would obtain by taking expectations in the almost sure bounds of Corollary~\ref{corollary:almost sure decay}. This is summarized in the following corollary:
\end{remark}
\begin{corollary}
Under the assumptions of  Theorem~\ref{SchrodingerScaling}, there exists a deterministic constant $\alpha_0$ so that for all $|\ell|\sqrt{\frac{\kappa}{\nu}}\geq \alpha_0$, it holds that 
\begin{equation}
\EE\|P_{\ell}f^\nu(t)\|^2_{L^2_y} \leq C \nu^{-\frac{n_0+4}{2(n_0+2)}}\kappa^\frac{1}{n_0+2}|\ell|^\frac{2}{n_0+2}\exp{(-c \nu^\frac{n_0+1}{n_0+2}\kappa^\frac{1}{n_0+2} |\ell|^\frac{2}{n_0+2} t)}\EE\|P_\ell f(0)\|^2_{L^2_y}\:,
\end{equation}
for every $t\geq0$, and some constants $C,c>0$ independent of $|\ell|,\kappa, \nu$.
\end{corollary}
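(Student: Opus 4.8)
The plan is to recycle the spectral estimate for the two-point function that opens the proof of Corollary~\ref{corollary:almost sure decay}, simply omitting the Borel--Cantelli step that afterwards upgrades it to a pathwise bound; this is exactly what makes the present statement both shorter to prove and (as noted in the preceding remark) quantitatively slightly sharper. \textbf{Step 1: reduction to a Schrödinger semigroup.} Taking the Fourier transform of \eqref{Eq:StochasticShear} in $x$ and setting $g_\ell(t,y,y')=\EE\bigl(\hat f_\ell(t,y)\,\overline{\hat f_\ell(t,y')}\bigr)$, one obtains the deterministic parabolic equation $\partial_t g_\ell=\nu\Delta g_\ell-2\ell^2\kappa\sum_j(u_j(y)-u_j(y'))^2 g_\ell$ on $\TT^2$, that is $g_\ell(t)=\e^{-\nu L^\lambda t}g_\ell(0)$ with $L^\lambda=-\Delta+\lambda^2 V$, $V=\sum_j(u_j(y)-u_j(y'))^2\ge 0$ and $\lambda^2\simeq \ell^2\kappa/\nu$. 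The structural point that makes everything work is that the \emph{two}-point function diffuses in \emph{both} variables, so $L^\lambda$ is genuinely elliptic on $\TT^2$ and $\e^{-\nu L^\lambda t}$ is an $L^2$-contraction.

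\textbf{Step 2: trace theorem and functional calculus.} Since $g_\ell(t,y,y)=\EE|\hat f_\ell(t,y)|^2\ge 0$, Cauchy--Schwarz and the trace theorem on $\TT^2$ give
\begin{equation*}
\EE\|P_\ell f^\nu(t)\|_{L^2_y}^2=\int_\TT g_\ell(t,y,y)\,\dd y\le\|g_\ell(t)\|_{L^2(\TT^2)}\le\|g_\ell(t)\|_{H^1(\TT^2)},
\end{equation*}
so it suffices to decay $\|g_\ell(t)\|_{H^1}$. Positivity of $V$ yields $\nu^{1/2}\|h\|_{\dot H^1}\le\|(\nu L^\lambda)^{1/2}h\|$, and applying the functional calculus to the self-adjoint operator $L^\lambda$ gives, writing $\mu^\lambda$ for its lowest eigenvalue and taking $t\ge(2\nu\mu^\lambda)^{-1}$,
\begin{equation*}
\|g_\ell(t)\|_{\dot H^1}\le \nu^{-1/2}(\nu\mu^\lambda)^{1/2}\e^{-\nu\mu^\lambda t}\|g_\ell(0)\|_{L^2},\qquad \|g_\ell(t)\|_{L^2}\le \e^{-\nu\mu^\lambda t}\|g_\ell(0)\|_{L^2}.
\end{equation*}
Finally $\|g_\ell(0)\|_{L^2(\TT^2)}\le\EE\|\hat f_\ell(0)\|_{L^2_y}^2=\EE\|P_\ell f_0\|_{L^2_y}^2$ by the triangle inequality applied to the Hilbert--Schmidt norm of $\EE[\hat f_\ell(0)\otimes\overline{\hat f_\ell(0)}]$.

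\textbf{Step 3: inserting the gap and handling short times.} Theorem~\ref{SchrodingerScaling} provides $\mu^\lambda\ge c\lambda^{2/(n_0+2)}$ for all $\lambda=|\ell|\sqrt{\kappa/\nu}\ge\alpha_0$; since $\nu\lambda^{2/(n_0+2)}=\nu^{\frac{n_0+1}{n_0+2}}\kappa^{\frac1{n_0+2}}|\ell|^{\frac2{n_0+2}}$, substituting into Step 2 and collecting the powers of $\nu$, $\kappa$ and $|\ell|$ yields exactly the asserted bound on the range $t\ge(2\nu\mu^\lambda)^{-1}$. For $0\le t<(2\nu\mu^\lambda)^{-1}$ the semigroup estimate is useless, since the operator norm of $(\nu L^\lambda)^{1/2}\e^{-\nu L^\lambda t}$ is only $O(t^{-1/2})$; there one instead invokes the energy identity $\EE\|P_\ell f^\nu(t)\|_{L^2_y}^2\le\EE\|P_\ell f_0\|_{L^2_y}^2$ and observes that on this bounded window the exponential in the claimed inequality stays above $\e^{-c/2}$, while $\lambda\ge\alpha_0$ forces $\kappa\gtrsim\nu|\ell|^{-2}$, so that the algebraic prefactor $\nu^{-\frac{n_0+4}{2(n_0+2)}}\kappa^{\frac1{n_0+2}}|\ell|^{\frac2{n_0+2}}$ is bounded below by an absolute constant in the relevant regime $\nu\lesssim 1$, and the inequality holds there as well.

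I do not anticipate a real obstacle inside this corollary: all the genuine difficulty is absorbed into Theorem~\ref{SchrodingerScaling}, whose semiclassical lower bound on $\mu^\lambda$ is delicate precisely because $V$ vanishes along an entire curve (and possibly several intersecting ones) rather than at isolated points, putting it outside the scope of the classical results of \cite{Simon_1983}. The only mildly technical points that arise here are the uniform treatment of the short-time regime and the bookkeeping of the $\nu$, $\kappa$, $|\ell|$ exponents, and neither is serious.
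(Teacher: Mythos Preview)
Your proposal is correct and follows essentially the same route as the paper: the corollary is obtained precisely from the opening computation of the proof of Corollary~\ref{corollary:almost sure decay} (two-point reduction, trace theorem, and functional calculus together with the spectral gap from Theorem~\ref{SchrodingerScaling}), stopping before the Borel--Cantelli upgrade. Your explicit treatment of the short-time window $t<(2\nu\mu^\lambda)^{-1}$ via the energy inequality is a minor addition that the paper leaves implicit.
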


\subsection{Main Ideas of the Proof}
The proof of Theorem~\ref{SchrodingerScaling} is an adaptation of the spectral analysis techniques developed in \cite{Simon_1983}. The central insight is that the asymptotic scaling of the spectrum of a Schrödinger operator with a non-negative potential is primarily governed by the local geometry of the potential’s zero set. Specifically, recall that the smallest eigenvalue of the operator $-\Delta+\lambda^2 V$ can be characterized variationally as the minimum of
\begin{equation}
\int_{\mathbb{T}^2}|\nabla f|^2+\lambda^2 V|f|^2 \dd x\:,
\end{equation}
taken over all functions $f\in H^1(\mathbb{T}^2)$ with  $
\|f\|_{L^2}=1$. As $\lambda \to \infty$, it is natural to expect that minimisers will concentrate near the zero set of $V$, where the potential energy is negligible. Consequently, the structure of the potential near its zero set plays a decisive role in determining the spectral scaling.

To make this intuition rigorous, we employ the Ismagilov–Morgan–Simon (IMS) localization formula (see \cite{Simon_1983}), which allows one to partition the domain and localize the operator to regions where the behavior of the potential is better understood. The hope is then to approximate $V$ near its zeros by a simpler potential $\Tilde{V}$, for which the spectral behavior of $-\Delta +\lambda^2 \Tilde{V}$ is explicitly known. For instance, when all zeros of $V$ are non-degenerate critical points—i.e., $V$ locally resembles $|x|^2$, the model operator $-\Delta +\lambda^2 |x|^2$ is unitarily equivalent to $\lambda(-\Delta + |x|^2)$, and thus its spectrum scales linearly in $\lambda$.

In our setting, the zero set of $V=\sum_{j}(u_j(y)-u_j(y'))^2$ is more intricate than a collection of isolated non-degenerate minima. However, due to the special structure of the functions $u_j$, the types of degeneracies that arise near the zero set are relatively limited. Specifically, the zeros of $V$ are confined to a finite union of smooth, non-intersecting curves along which the Hessian of $V$ is non-vanishing, and a finite number of isolated points where the potential can be locally approximated (in a sense we will make precise) by expressions of the form $(x^{n_1}-y^{n_2})^2$. By analyzing the spectral scaling in neighborhoods of these regions, we will derive the scaling behavior of the full operator in the $\lambda \to \infty$ limit.

\subsection{Proof of Theorem~\ref{SchrodingerScaling}}
We begin by stating the IMS localisation formula, which will be of essential importance to our proof.
\begin{lemma}[\cite{Simon_1983}]
\label{IMS}
Let \(\{J_1^2,\dots J_n^2\}\) be a smooth partition of unity, and let \(H=-\Delta+V_0\) be a Schrödinger operator defined by the form sum of \(-\Delta\) and \(V_0\), see \cite{Reed_Simon_1972}, so that the form domain of \(H\) is precisely \(\cD(-\Delta)\cap \cD(V_0^+)\). Then, it holds that
\begin{equation}
H=\sum_{j=1}^nJ_i H J_i -\sum_{j=1}^n |\nabla J_i|^2\:.
\end{equation}
\end{lemma}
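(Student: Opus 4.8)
\emph{Proof proposal.}
The plan is to prove the identity first as an equality of quadratic forms on a form core, and then extend it by continuity to all of the form domain of $H$; the only structural inputs are the partition-of-unity relation $\sum_{j=1}^n J_j^2 \equiv 1$ and the Leibniz rule, so the argument reduces to an integration by parts.

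Concretely, I would fix $f$ in a form core --- say $f \in C^\infty(\mathbb{T}^2)$, or directly in $\mathcal{D}(-\Delta)\cap\mathcal{D}(V_0^+)$ --- and compute $\sum_{j} \langle J_j f, H J_j f\rangle$ term by term. Since $V_0$ acts by multiplication it commutes with each $J_j$, so $\sum_j \langle J_j f, V_0 J_j f\rangle = \langle f, V_0 \bigl(\sum_j J_j^2\bigr) f\rangle = \langle f, V_0 f\rangle$ exactly. For the kinetic part, expanding $\nabla(J_j f) = J_j\nabla f + f\,\nabla J_j$ gives
\[
\|\nabla(J_j f)\|_{L^2}^2 = \int_{\mathbb{T}^2} J_j^2 |\nabla f|^2 \;+\; \int_{\mathbb{T}^2}\nabla(J_j^2)\cdot \Re(\bar f\,\nabla f) \;+\; \int_{\mathbb{T}^2} |\nabla J_j|^2 |f|^2 .
\]
Summing over $j$, the first term collapses to $\|\nabla f\|^2$ by $\sum_j J_j^2 = 1$; the cross term vanishes because $\sum_j \nabla(J_j^2) = \nabla\bigl(\sum_j J_j^2\bigr) = 0$ (one may equally integrate it by parts into $-\tfrac12\int \Delta(J_j^2)|f|^2$, which again telescopes to zero); and the last term is exactly the announced correction. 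Combining the kinetic and potential computations yields
\[
\sum_{j=1}^n \langle J_j f, H J_j f\rangle = \langle f, H f\rangle + \sum_{j=1}^n \langle f, |\nabla J_j|^2 f\rangle ,
\]
which, polarised, is the operator identity of the statement. An equivalent route, closer to \cite{Simon_1983}, is via the double commutator: one checks on smooth functions that $[-\Delta,J_j] = -(\Delta J_j) - 2\nabla J_j\cdot\nabla$, hence $[J_j,[H,J_j]] = [J_j,[-\Delta,J_j]] = 2|\nabla J_j|^2$, and then invokes the purely algebraic relation $\sum_j J_j H J_j = H + \tfrac12\sum_j [J_j,[H,J_j]]$, valid for any $H$ once $\sum_j J_j^2 = 1$.

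The one point that needs genuine care --- and the only place the hypothesis that $H$ is the \emph{form} sum of $-\Delta$ and $V_0$ enters --- is the extension from a core to the full form domain. One must know that multiplication by each $J_j$ maps the form domain continuously into itself, so that the four inner products above are well defined for every $f$ in it and the identity survives approximation in the form norm. This follows from $0\le J_j^2\le 1$ together with the elementary bound $\|\nabla(J_j f)\|\le \|J_j\|_\infty\|\nabla f\| + \|\nabla J_j\|_\infty\|f\|$, and in our application it is immediate since $\mathbb{T}^2$ is compact and $V_0 = \lambda^2 V$ with $V\in C^\infty$, so the $J_j$ and all their derivatives are bounded. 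Accordingly, the expected main obstacle is not any computation but this routine domain bookkeeping, which on the torus presents no difficulty.
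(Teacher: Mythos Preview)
Your proof is correct and is the standard argument; note however that the paper does not prove this lemma at all --- it is quoted from \cite{Simon_1983} and used as a black box. Both routes you outline (the direct form computation via $\nabla(J_j f)=J_j\nabla f+f\nabla J_j$ and the double-commutator identity $\sum_j J_jHJ_j=H+\tfrac12\sum_j[J_j,[H,J_j]]$) are exactly what one finds in Simon's paper and the surrounding literature, so there is nothing to compare beyond observing that you have supplied the omitted details.
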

\begin{remark}
The assumption on the domains will certainly hold in our case, since the potential is a bounded operator.
\end{remark}
As mentioned before, this result essentially tells us that we can reduce the global problem of finding bounds on eigenvalues of a Schrödinger operator to a series of local problems. In particular, we shall now seek to understand the behaviour of the local problems that may occur for our operator. We begin by studying the spectrum of 
\begin{equation}
-\Delta+\lambda^2 (x^n-y^m)^2
\end{equation}
on $L^2(\mathbb{R}^2)$. To do so, we first note the following preparatory lemma.
\begin{lemma}
\label{lemma:schrodingerScaling1}
Consider the Schr\"odinger operator $H^\mu=-\partial_x^2+\mu^2 (x^m \pm 1)^2$, i.e. the potential is given either by $(x^m-1)^2$ or $(x^m+1)^2$. Let $\mu >0$. Then, there exists a constant $c>0$, so that for all $\mu>0$, the spectrum of $H^\mu$ is contained in $(c\mu,\infty)$.
\end{lemma}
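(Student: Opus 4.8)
Since $H^\mu\ge 0$, the claim is equivalent to the lower bound $\inf\operatorname{spec}(H^\mu)\ge c\,\mu$ for all $\mu>0$ (then the spectrum lies in $(c'\mu,\infty)$ for any $c'<c$). I would split into three regimes: large $\mu$, where $\inf\operatorname{spec}(H^\mu)$ is genuinely of order $\mu$ and is produced by the behaviour of $V(x)=(x^m\pm1)^2$ near its (at most two) real zeros, located in $\{-1,1\}$; small $\mu$, where $\inf\operatorname{spec}(H^\mu)$ is in fact \emph{larger}, of order $\mu^{2/(m+1)}$; and a compact intermediate range handled by a soft continuity argument.

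\textbf{Large $\mu$.} Here I would apply the IMS formula of Lemma~\ref{IMS} with a \emph{fixed}, $\mu$-independent smooth partition of unity $\sum_\iota J_\iota^2\equiv1$: a bump $J_i^2$ supported in a small fixed neighbourhood $U_i$ of each real zero $x_i\in\{-1,1\}$ of $x^m\pm1$, and a bump $J_\infty^2$ supported off the $U_i$. Since $V$ is continuous, strictly positive away from $\{-1,1\}$, and tends to $+\infty$, we have $V\ge\delta$ on $\operatorname{supp}J_\infty$ for some $\delta>0$, whence $\langle\phi,H^\mu\phi\rangle\ge\mu^2\delta\|\phi\|^2$ for $\phi$ supported there. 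On $U_i$, a second-order Taylor expansion of $x\mapsto x^m\pm1$ at $x_i$ (and $|x_i|=1$, so the leading coefficient $m\,x_i^{m-1}$ has modulus $m$) gives $V(x)\ge\tfrac{m^2}{2}(x-x_i)^2$ once $U_i$ is small enough, so for $\phi$ supported in $U_i$,
\[
\langle\phi,H^\mu\phi\rangle\ \ge\ \|\phi'\|^2+\tfrac{m^2\mu^2}{2}\int(x-x_i)^2|\phi|^2\ \ge\ \tfrac{m}{\sqrt2}\,\mu\,\|\phi\|^2 ,
\]
using the explicit ground-state energy $\sqrt a$ of $-\partial_x^2+a(x-x_i)^2$ on $L^2(\RR)$. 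Summing through Lemma~\ref{IMS}, using $\sum_\iota\|J_\iota\psi\|^2=\|\psi\|^2$ and absorbing the fixed error $C:=\sum_\iota\||\nabla J_\iota|^2\|_\infty$, yields $\inf\operatorname{spec}(H^\mu)\ge\min(\tfrac{m}{\sqrt2}\mu,\ \mu^2\delta)-C\ge\tfrac{m}{2\sqrt2}\mu$ for all $\mu\ge\mu^{**}$, where $\mu^{**}$ depends only on $m,\delta,C$. (When $V$ has no real zero this degenerates to $V\ge1$, $\inf\operatorname{spec}(H^\mu)\ge\mu^2$, which is also $\ge\mu$ for $\mu\ge1$.)

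\textbf{Small $\mu$.} From the elementary inequality $2|x|^m\le\tfrac12 x^{2m}+2$ one gets the pointwise bound $V(x)\ge\tfrac12 x^{2m}-1$, hence $H^\mu\ge -\partial_x^2+\tfrac{\mu^2}{2}x^{2m}-\mu^2$. After the dilation $x\mapsto(\mu^2/2)^{-1/(2m+2)}x$ the operator $-\partial_x^2+\tfrac{\mu^2}{2}x^{2m}$ becomes $(\mu^2/2)^{1/(m+1)}(-\partial_x^2+x^{2m})$, whose smallest eigenvalue is $2^{-1/(m+1)}E_0^*\,\mu^{2/(m+1)}$ with $E_0^*:=\inf\operatorname{spec}(-\partial_x^2+x^{2m})>0$. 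Since $\tfrac{2}{m+1}\le1$, for all $\mu\le\mu_*$ (with $\mu_*>0$ depending only on $m$) this gives $\inf\operatorname{spec}(H^\mu)\ge 2^{-1/(m+1)}E_0^*\mu^{2/(m+1)}-\mu^2\ge\tfrac12\,2^{-1/(m+1)}E_0^*\,\mu^{2/(m+1)}\ge c_-\mu$.

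\textbf{Intermediate $\mu$ and conclusion.} On the compact interval $[\mu_*,\mu^{**}]$ (empty if $\mu_*\ge\mu^{**}$) the map $\mu\mapsto\inf\operatorname{spec}(H^\mu)$ is continuous, since the perturbation $(\mu^2-\mu'^2)V$ is form-bounded relative to $H^{\mu'}$ with small relative bound (one has $V\le\mu_*^{-2}H^{\mu'}$ for $\mu'\ge\mu_*$), and it is strictly positive, since $\inf\operatorname{spec}(H^\mu)=0$ would force $\psi'=0$ and $V|\psi|^2=0$ a.e.\ for a normalised $\psi$, impossible because $V>0$ a.e. Hence it is bounded below there by some $c_0>0$, so $\inf\operatorname{spec}(H^\mu)\ge c_0\ge(c_0/\mu^{**})\,\mu$. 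Taking $c$ smaller than $\tfrac{m}{2\sqrt2}$, $c_-$ and $c_0/\mu^{**}$ finishes the proof. The one step that requires actual work is the large-$\mu$ estimate: because $V$ really does vanish (at $\pm1$) no global bound $V\ge\text{const}$ is available, so one must localise at the zeros and compare with a harmonic oscillator, and when $V$ has two zeros the IMS formula is exactly what decouples the two wells without incurring exponentially small tunneling corrections.
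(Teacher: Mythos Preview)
Your argument is correct, but the paper's proof is organised differently and, in one place, more economically. For large $\mu$ the paper simply cites \cite{Simon_1983} (the zeros of $(x^m\pm1)^2$ being isolated and non-degenerate), whereas you reprove this with an explicit IMS localisation; your version is self-contained but not strictly needed. For small $\mu$ the paper uses the cruder quadratic bound $(x^m\pm1)^2\ge bx^2-1$, which after the harmonic oscillator scaling gives directly $\inf\sigma(H^\mu)\ge c\mu-\mu^2$; your bound $(x^m\pm1)^2\ge\tfrac12 x^{2m}-1$ yields the stronger $\mu^{2/(m+1)}$ scaling, but either suffices. The real difference is in bridging the two regimes: you argue via continuity of $\mu\mapsto\inf\sigma(H^\mu)$ on a compact interval, while the paper observes that this function is \emph{non-decreasing} in $\mu$ (since $V\ge0$), which immediately propagates the small-$\mu$ lower bound across any intermediate range without invoking perturbation theory or compact resolvents. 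Your approach works, but the monotonicity observation is cleaner and worth noting.
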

\begin{proof}
We begin by noting that for $\mu \to \infty$, the result follows by \cite{Simon_1983}, since the function $(x^m \pm 1)^2$ only has isolated, non-degenerate zeros. Hence, it remains to deal with finite $\mu$. However, note further that since $(x^m \pm 1)^2 \geq 0$, the infimum of the spectrum of $H^\mu$ is a non-decreasing function in $\mu$. If we can further prove that for all $\mu$ sufficiently close to $0$, it holds that $\inf \sigma(H^\mu) \geq c_0 \mu$, then $\inf \sigma(H^\mu)$ is a non-decreasing function in $\mu$ so that for all $\mu$ small enough, and for all $\mu$ large enough, there exists a constant $c_0>0$ so that $\inf \sigma(H^\mu) \geq c_0 \mu$. This implies the claimed result. Therefore, it remains to deal with the bound as $\mu \to 0$.
We begin by noting that for any $m, a$ there exists a constant $b>0$ so that 
\begin{equation}
(x^m \pm 1)^2 \geq -1+bx^2\:,
\end{equation}
for all $x \in \mathbb{R}$. Therefore, in the sense of forms, it holds that 
\begin{equation}
H^\mu \geq -\partial_x^2 +\mu^2(bx^2-1)\:.
\end{equation}
Certainly, it holds that $\sigma(-\partial_x^2 +\mu^2(bx^2-1))=\sigma(-\partial_x^2 +b\mu^2x^2)-\mu^2$. Furthermore, note that the unitary map $Uf(x)=\mu^{\frac{1}{4}}f(\mu^\frac{1}{2}x)$ transforms the operator into
\begin{equation}
U^{-1}(-\partial_x^2+b\mu^2x^2)U=\mu (-\partial_x^2+bx^2)\:.
\end{equation}
Hence, it holds that $\sigma(-\partial_x^2+b\mu^2x^2)=\mu \sigma(-\partial_x^2+bx^2) \subset [c\mu, \infty)$, for some $c>0$. Therefore, for $\mu$ small enough it holds that 
\begin{equation}
\sigma(-\partial_x^2 +\mu^2(bx^2-1))=\sigma(-\partial_x^2 +b\mu^2x^2)-\mu^2 \subset [c\mu-\mu^2, \infty) \subset [\frac{c}{2}\mu, \infty)\:.
\end{equation}
This completes the proof.
\end{proof}
With this result in hand, we now have all the tools needed to understand the scaling near a degeneracy of the form $(x^{m}-y^{n})^2$.
\begin{lemma}
\label{lemma:Schrodinger scaling 2}
Consider the Schr\"odinger operator $H=-\Delta+\lambda^2(x^m\pm y^n)^2$ on $\mathbb{R}^2$, for $m,n \in \mathbb{N}$, where the $\pm$ is understood as in Lemma \ref{lemma:schrodingerScaling1}. Then, there exists a constant $c_0>0$ so that the spectrum of $H$ is contained in $[c_0\lambda^\frac{2n}{2n+nm-m}, \infty)$, for all $\lambda \geq 0$.
\end{lemma}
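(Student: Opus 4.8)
The plan is to reduce the two–dimensional spectral problem to a one–dimensional one by fibering in the variable $x$, applying Lemma~\ref{lemma:schrodingerScaling1} in the $y$-fibre, and then bounding what remains by the ground–state energy of a one–dimensional anharmonic oscillator, which carries exactly the right power of $\lambda$. Since $H$ is self–adjoint and bounded below, it suffices to prove $\inf\sigma(H)\ge c_0\lambda^{2n/(2n+nm-m)}$, and I will do this at the level of quadratic forms, testing against $f\in C_c^\infty(\mathbb{R}^2)$. The case $\lambda=0$ is trivial since $\inf\sigma(-\Delta)=0$, so I assume $\lambda>0$; I will also assume $m,n\ge 1$.

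\emph{Fibrewise estimate.} Fix $x\neq 0$ and regard $g(y)=f(x,y)$ as an element of $C_c^\infty(\mathbb{R})$. Substituting $y=|x|^{m/n}\eta$ and setting $h(\eta)=g(|x|^{m/n}\eta)$, one checks that $x^m\pm y^n=|x|^m(\pm\eta^n+\varepsilon)$ with $\varepsilon=\operatorname{sign}(x^m)\in\{-1,+1\}$, hence $(x^m\pm y^n)^2=|x|^{2m}(\eta^n+\delta)^2$ for some sign $\delta\in\{-1,+1\}$. Tracking the Jacobian through the change of variables yields
$$\int_{\mathbb{R}}|\partial_y f(x,y)|^2+\lambda^2(x^m\pm y^n)^2|f(x,y)|^2\,dy=|x|^{-m/n}\Bigl(\int_{\mathbb{R}}|h'|^2+\mu^2(\eta^n+\delta)^2|h|^2\,d\eta\Bigr),$$
with $\mu=\lambda|x|^{m(n+1)/n}$. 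Lemma~\ref{lemma:schrodingerScaling1}, which covers both signs in $(\eta^n\pm 1)^2$, bounds the bracket below by $c\mu\int|h|^2\,d\eta$ for a constant $c>0$ independent of $\mu$; undoing the substitution gives, for every $x$ (and trivially at $x=0$),
$$\int_{\mathbb{R}}|\partial_y f(x,y)|^2+\lambda^2(x^m\pm y^n)^2|f(x,y)|^2\,dy\ \ge\ c\lambda\,|x|^{m(n-1)/n}\int_{\mathbb{R}}|f(x,y)|^2\,dy.$$

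\emph{Reduction to a one–dimensional oscillator.} Integrating the previous inequality in $x$ and adding $\int|\partial_x f|^2$ gives the form comparison $q_H(f)\ge q_B(f)$, where $B=-\partial_x^2+c\lambda|x|^\beta$ with $\beta=m(n-1)/n$, acting on $L^2(\mathbb{R}^2)$ through the $x$-variable only. Hence $\inf\sigma(H)\ge\inf\sigma(B)=\inf\sigma(-\partial_x^2+c\lambda|x|^\beta)$ on $L^2(\mathbb{R})$. The scaling $x\mapsto (c\lambda)^{-1/(\beta+2)}x$ conjugates $-\partial_x^2+c\lambda|x|^\beta$ to $(c\lambda)^{2/(\beta+2)}(-\partial_x^2+|x|^\beta)$; since $|x|^\beta\to\infty$, this operator has compact resolvent, and its lowest eigenvalue $\kappa_\beta$ is strictly positive (if it vanished, the positive ground state would be a convex $L^2$ function on $\mathbb{R}$, which is impossible). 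Therefore $\inf\sigma(H)\ge (c\lambda)^{2/(\beta+2)}\kappa_\beta$, and since $\beta+2=(mn-m+2n)/n$ one has $2/(\beta+2)=2n/(2n+nm-m)$, which is the claimed exponent, with $c_0=c^{2n/(2n+nm-m)}\kappa_\beta$.

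Once this structure is set up the argument is essentially mechanical; the points needing care are the bookkeeping of the parity/sign cases for $x^m$ and $y^n$ so that the rescaled potential is genuinely of the form $(\eta^n\pm 1)^2$ covered by Lemma~\ref{lemma:schrodingerScaling1}, the Jacobian factors in the substitution $y=|x|^{m/n}\eta$, and the (elementary) positivity of $\kappa_\beta$. In contrast with Lemma~\ref{lemma:schrodingerScaling1}, no separate treatment of small $\lambda$ is required: the scaling in the last step is exact and Lemma~\ref{lemma:schrodingerScaling1} already holds for all $\mu>0$.
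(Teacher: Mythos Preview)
Your proof is correct and follows essentially the same approach as the paper: fibre in the $y$-variable with the substitution $y=|x|^{m/n}\eta$, apply Lemma~\ref{lemma:schrodingerScaling1} with $\mu=\lambda|x|^{m(n+1)/n}$ to obtain the effective one-dimensional potential $c\lambda|x|^{m(n-1)/n}$, and then extract the exponent by scaling $-\partial_x^2+c\lambda|x|^\beta$. The only cosmetic point is that for $n=1$ one has $\beta=0$, so the resulting operator $-\partial_x^2+c\lambda$ has purely continuous spectrum with infimum $c\lambda$ rather than a ground-state eigenvalue $\kappa_\beta$; the conclusion is unaffected.
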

\begin{proof}
Pick any $f \in C_c^\infty(\mathbb{R}^2)$. Then, for almost every $x>0$, we have 
\begin{equation}
\int_{\mathbb{R}}|\partial_y f|^2+\lambda^2(x^m
\pm y^n)^2|f|^2\dd y=\int_{\mathbb{R}}|\partial_y f|^2+\lambda^2x^{2m}\left(1 \pm \left(\frac{y}{x^{\frac{m}{n}}}\right)^n\right)^2|f|^2\dd y\:.
\end{equation}
Change variables $v =\frac{y}{x^\frac{m}{n}}$ to see this is lower bounded by 
\begin{align}
&x^{\frac{m}{n}}\int_{\mathbb{R}}|\partial_v f(v,x)|^2 x^{-\frac{2m}{n}}+ x^{2m}\lambda^2\left(1 \pm v^n\right)^2|f(v)|^2 \dd v\notag\\
&\qquad \quad =x^{-\frac{m}{n}}\int_{\mathbb{R}}|\partial_v f(v,x)|^2 +x^{2m+\frac{2m}{n}}\lambda^2\left(1 \pm v^n\right)^2|f(v)|^2 \dd v\:.
\end{align}
Applying Lemma \ref{lemma:schrodingerScaling1} with $\mu=\lambda x^{m+\frac{m}{n}}$, there holds that 
\begin{align}
x^{-\frac{m}{n}}\int_{\mathbb{R}}|\partial_v f(v,x)|^2 +x^{2m+\frac{2m}{n}}\lambda^2\left(1 \pm v^n\right)^2|f(v,x)|^2 \dd v 
&\geq c_0\lambda x^{m} \int_{\mathbb{R}}|f(v,x)|^2 \dd v\notag\\
&=c_0\lambda x^{m-\frac{m}{n}}\int_{\mathbb{R}}|f(y,x)|^2 \dd y\:.
\end{align}
Similarly, for $x<0$, we can write $(x^m \pm y^n)^2= x^{2m}((-1)^m \pm (\frac{y}{(-x)^\frac{m}{n}})^n)^2$ and the same bound holds for this expression. Hence, we have shown that there exists $c_0>0$ so that for any $f \in C_c^\infty(\mathbb{R}^2)$, and for almost every $x$ there holds that
\begin{equation}
\int_{\mathbb{R}}|\partial_y f|^2+\lambda^2(x^m \pm y^n)^2|f|^2\dd y \geq c_0\lambda |x|^{m-\frac{m}{n}}\int_{\mathbb{R}}|f|^2 \dd y\:.
\end{equation}
Therefore, we have the form bound 
\begin{equation}
\int_{\mathbb{R}^2}|\partial_x f|^2 +|\partial_y f|^2+\lambda^2(x^m \pm y^n)^2|f|^2 \dd x \dd y \geq \int_{\mathbb{R}^2}|\partial_x^2f|+c_0 \lambda|x|^{m-\frac{m}{n}}|f|^2 \dd x \dd y\:.
\end{equation}
This lower bound corresponds to the form associated to the Schr\"odinger operator $(-\partial_x^2+c_0\lambda|x|^{m-\frac{m}{n}})$. This is of the form $-\partial_x^2+c_0\alpha^2|x|^{2\beta}$. Under conjugation by the unitary transform $Vf(x)=\alpha^{\frac{1}{2(\beta+1)}}f(x\alpha^{\frac{1}{\beta+1}})$ this operator is equivalent to $\alpha^{\frac{2}{\beta+1}}(-\partial_x^2+c_0|x|^{2\beta})$. 
Hence, we observe that 
\begin{align}
\int_{\mathbb{R}^2}|\partial_x f|^2+c_0 \lambda|x|^{m-\frac{m}{n}}|f|^2 \dd x \dd y\geq c\lambda^{\frac{2n}{mn+2n-m}}\int_{\mathbb{R}^2}|f|^2 \dd x \dd y\:,
\end{align}
which is the claimed bound.
\end{proof}
Let us remark at this point that, as long as $n,m \leq n_0+1$, it holds that $\frac{2n}{2n+mn-m} \geq \frac{2}{n_0+2}$. Indeed, rearranging this inequality, it is equivalent to show that $n(n_0+2) \geq 2n+mn-m$, i.e. that $n (n_0 -m)\geq -m$. If $n_0 \geq m$ this is trivially satisfied, and if $m \in [n_0,n_0+1]$, this becomes $n \leq \frac{m}{m-n_0}=1+\frac{n_0}{m-n_0}$. But $n \geq n_0+1$, and since $m-n_0 \leq 1 $, we have the chain of inequalities $n \leq 1+n_0 \leq 1+\frac{n_0}{m-n_0}$, and so indeed $\frac{2n}{2n+mn-m} \geq \frac{2}{n_0+2}$, for all natural numbers $m,n \leq n_0+1$.\\

Finally, we show that locally our problem will always look like the above toy scenario.
\begin{lemma}
\label{lemma:LocallyPolynomial}
Let $f \in C^{m+1}(\mathbb{R})$ have a critical point of order $m-1$ at zero, i.e. locally $f$ may be written $f(x)=f(0)+cx^{m}+\mathcal{O}(x^{m+1})$ for $c \neq 0$. Then, there exist open intervals $\Tilde{I}, I \ni 0$ and a diffeomorphism $\psi: I \to \Tilde{I}$ so that $f(\psi^{-1}(x))=f(0)+\mathrm{sign}(c) x^m$.
\end{lemma}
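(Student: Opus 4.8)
The strategy is to produce $\psi$ as an explicit $C^1$ change of variables obtained by factoring out the leading-order behaviour of $f$, in the spirit of the Hadamard/Morse normal-form argument. Throughout we may assume $f(0)=0$, as the additive constant is irrelevant.

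First I would apply Hadamard's lemma. Since $f\in C^{m+1}$ and, by hypothesis, $f(0)=f'(0)=\dots=f^{(m-1)}(0)=0$ with $f^{(m)}(0)=m!\,c\neq 0$, Taylor's theorem with integral remainder gives, on a neighbourhood of $0$, the factorisation $f(x)=x^{m}g(x)$ with
\begin{equation}
g(x)=\frac{1}{(m-1)!}\int_0^1(1-t)^{m-1}f^{(m)}(tx)\,\dd t\;,
\end{equation}
so that $g$ is $C^1$ (differentiation under the integral sign uses only continuity of $f^{(m+1)}$) and $g(0)=f^{(m)}(0)/m!=c\neq 0$. By continuity, $g$ keeps the sign of $c$ on some open interval $I_0\ni 0$, so I can set $h(x)=\bigl(\mathrm{sign}(c)\,g(x)\bigr)^{1/m}=|g(x)|^{1/m}$, which is again $C^1$ on $I_0$ (the $m$-th root is smooth away from the origin) and satisfies $h(0)=|c|^{1/m}>0$. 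Define $\psi(x)=x\,h(x)$: then $\psi$ is $C^1$, $\psi(0)=0$ and $\psi'(0)=h(0)\neq 0$, so the inverse function theorem yields open intervals $I,\Tilde I\ni 0$ and a $C^1$ diffeomorphism $\psi\colon I\to\Tilde I$. A direct computation then gives
\begin{equation}
\psi(x)^m=x^m h(x)^m=x^m|g(x)|=\mathrm{sign}(c)\,x^m g(x)=\mathrm{sign}(c)\,f(x)\;,
\end{equation}
hence $f(x)=\mathrm{sign}(c)\,\psi(x)^m$ on $I$; substituting $x=\psi^{-1}(y)$ for $y\in\Tilde I$ and restoring the constant yields $f(\psi^{-1}(y))=f(0)+\mathrm{sign}(c)\,y^m$, which is the claim.

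There is no real obstacle here; the only points needing a little care are the sign bookkeeping when $m$ is odd (where $x^m$ and $g$ can both change sign, but their product—and hence $\mathrm{sign}(c)f$—is $\ge 0$ near $0$, consistently with $\psi(x)^m=x^m|g(x)|$), and the observation that since $f$ is only assumed $C^{m+1}$ one obtains $g,h,\psi\in C^1$ rather than $C^\infty$, so $\psi$ is merely a $C^1$ diffeomorphism. This regularity is nonetheless more than sufficient: the diffeomorphism is in particular bi-Lipschitz on a closed subinterval, which is all that is required to compare the Dirichlet forms under the change of variables $y\mapsto\psi^{-1}(y)$ in the application to Lemma~\ref{lemma:Schrodinger scaling 2}.
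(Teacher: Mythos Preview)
Your argument is correct and, pleasantly, constructs the \emph{same} diffeomorphism as the paper: with $f(x)=x^{m}g(x)$ one has $x\,|g(x)|^{1/m}=\mathrm{sign}(x)\,|f(x)|^{1/m}$, which is exactly the coordinate change the paper introduces. The difference is in how regularity is established. The paper works directly with $\mathrm{sign}(x)|f(x)|^{1/m}$, which involves an $m$-th root of a function vanishing at the origin; it must therefore compute $\partial_x\bigl(\mathrm{sign}(x)|f(x)|^{1/m}\bigr)$ by hand, verify the limit at $0$, and then invoke the implicit function theorem to produce the inverse. By factoring out $x^m$ first via the Taylor integral remainder, you sidestep the singular root: $h=|g|^{1/m}$ is manifestly $C^1$ since $g$ is $C^1$ and bounded away from zero, so $\psi(x)=xh(x)$ is visibly $C^1$ with $\psi'(0)=|c|^{1/m}$, and only the inverse function theorem is needed. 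This is the cleaner route. One small slip: your parenthetical remark that for odd $m$ ``$x^m$ and $g$ can both change sign'' and ``$\mathrm{sign}(c)f\ge 0$'' is off---near $0$, $g$ keeps a fixed sign while $x^m$ changes sign, so $\mathrm{sign}(c)f(x)=x^m|g(x)|$ also changes sign with $x$---but this does not affect the main identity $\psi(x)^m=x^m|g(x)|=\mathrm{sign}(c)f(x)$, which holds for both parities of $m$ and is all you use.
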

\begin{proof}
Without loss of generality we let $f(0)=0$. Then consider the map ${\mathrm{sign}}(x)|f(x)|^\frac{1}{m}$. We claim that for $\phi^{-1}(x)$ near zero, we have $f(\phi^{-1}(x))=\mathrm{sign}(c)x^m$ if an only if \\
${\mathrm{sign}}(\phi^{-1}(x))|f(\phi^{-1}(x))|^\frac{1}{m}=x$. Indeed, if $m$ is even, taking both sides of the equation to the power $m$ yields that $|f(\phi^{-1}(x))|=x^m$. Furthermore, since $m$ is even, near zero it holds that ${\mathrm{sign}}(f(x))={\mathrm{sign}}(c)$. Thus, it must hold near zero that $f(\phi^{-1}(x))=\mathrm{sign}(c)x^m$. Similarly, if $m$ is odd, then taking the equality ${\mathrm{sign}}(\phi^{-1}(x))|f(\phi^{-1}(x))|^\frac{1}{m}=x$ to the power $m$ yields ${\mathrm{sign}}(\phi^{-1}(x))|f(\phi^{-1}(x))|=x^m$. Keeping in mind that $m$ is odd, for $x$ near zero it holds that ${\mathrm{sign}}(f(x))={\mathrm{sign}}(x){\mathrm{sign}}(c)$. Thus, it is clear that ${\mathrm{sign}}(\phi^{-1}(x))|f(\phi^{-1}(x))|={\mathrm{sign}}(c)f(\phi^{-1}(x))$. Thus, we in fact see that $f(\phi^{-1}(x))=\mathrm{sign}(c) x^m$, as claimed. Therefore, consider now the function $g(x,y)={\mathrm{sign}}(x)|f(x)|^\frac{1}{m}-y$. Certainly $g(0,0)=0$. Furthermore, it holds that 
\begin{equation}
\partial_xg(x,y)=\mathrm{sign}(x)\frac{1}{m}\frac{{\mathrm{sign}}(f(x))f'(x)}{|f(x)|^{1-\frac{1}{m}}}
\end{equation} 
away from zero. In particular, note that 
\begin{align}
\lim_{x \to 0}\partial_x g(x,y)=\lim_{x \to 0}\frac{1}{m}\frac{{\mathrm{sign}}(x)^m {\mathrm{sign}}(c) (mcx^{m-1}+\mathcal{O}(x^{m}))}{|cx^m+\mathcal{O}(x^{m+1})|^{1-\frac{1}{m}}}=\frac{|c|}{|c|^{1-\frac{1}{m}}}=|c|^{\frac{1}{m}}\:.
\end{align}
Furthermore, it is in fact differentiable at zero, and its derivative is given by $|c|^{\frac{1}{m}}$. Indeed, we compute
$$
\lim_{x \to 0}x^{-1}[{\mathrm{sign}}(x)|cx^m+\mathcal{O}(x^{m+1})|^{\frac{1}{m}}-x|c|^{\frac{1}{m}}]=\lim_{x \to 0}x^{-1}[{\mathrm{sign}}(x)|c|^{\frac{1}{m}}|x|(1+\mathcal{O}(x))-x|c|^\frac{1}{m}]=0\:.
$$
Therefore, by the implicit function theorem, there exist open intervals $I, \Tilde{I}$ containing zero, and a unique differentiable function $\chi:\Tilde{I} \to I$ so that $g(\chi(x),x)=0$. In fact, differentiating this expression in $x$, we see that it holds 
\begin{equation}
\partial_xg(\chi(x),x)\partial_x \chi-1=0\:.
\end{equation}
Since we know that $\partial_x\chi(x,y)$ is non-vanishing and continuous in an interval around zero, it is clear that 
\begin{equation}
\partial_x \chi(x)=\frac{1}{\partial_x g(\chi(x),x)}\:.
\end{equation}
The right-hand side of this equation is a Lipschitz function, so we conclude that $\chi(x)$ is in fact $C^1$, and also that $\partial_x \chi(x) \neq 0$ for all $x \in \Tilde{I}$, after possibly shrinking $\Tilde{I}$. Therefore, by the inverse function theorem, $\chi$ defines a diffeomorphism $\chi: \Tilde{I} \to I$. Setting $\psi=\chi^{-1}$, we complete the proof. 
\end{proof}

We now state the following lemma, which describes the local structure of the zero set of a potential of the form $V(x,y)=\sum_{j}(u_j(x)-u_j(y))^2$.
\begin{lemma}[Structure of the zero set]
\label{lemma:structure of zero set}
Let $\{u_j(x)\}_{j}$ be a countable set of $C^{\infty}$ functions on the torus, with a finite number of maximal overlapping critical point of order $n_0$, (i.e. for all $x \in \mathbb{T}$, there exists $j \in \mathbb{N}$ and $n \leq n_0$ so that $u_j^{(n+1)}(x) \neq 0$). Let $V=\sum_{j}(u_j(x)-u_j(y))^2$. Then, for any $\mathbf{x}=(x_0,y_0) \in V^{-1}(\{0\})$, one of the following holds true:
\begin{enumerate}[label=(C\arabic*), ref=(C\arabic*)]
    \item \label{C1} $\mathbf{x}$ is an isolated point of $V^{-1}(\{0\})$, and $V(x,y) \geq c ((x-x_0)^{2n_0+2}+(y-y_0)^{2n_0+2})$ in a neighbourhood of $\mathbf{x}$.
    \item \label{C2} There exists $j \in \mathbb{N}$, and a $C^2$ diffeomorphism $\Phi_{\mathbf{x}}:B_{\mathbf{x}} \to \mathcal{U}_{\mathbf{x}}$ so that setting $\Phi_{\mathbf{x}}(x,y)=(\zeta,\eta)$, $V_j(x,y)=(u_j(x)-u_j(y))^2$, it holds that $\{V_j=0\}\cap B_{\mathbf{x}} =\Phi^{-1}_{\mathbf{x}}\{(\zeta, 0)\}$. Furthermore, it holds that $\partial_\eta^2 V_j(\Phi_{\mathbf{x}}^{-1}(\zeta,0)) >0$.
    \item \label{C3} There exists a diffeomorphism $\Psi_{\mathbf{x}}:B_{\mathbf{x}}\to \mathcal{U}_{\mathbf{x}}$ so that $V(\Psi^{-1}_{\mathbf{x}}(x,y)) \geq ((x-x_0)^{n_1}\pm (y-y_0)^{n_2})^2$, for some $n_1,n_2 \leq n_0+1$, and where the $\pm$ is understood as in Lemma \ref{lemma:schrodingerScaling1}.
\end{enumerate}
\end{lemma}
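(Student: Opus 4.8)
The plan is to argue point by point. Fix $\mathbf{x}=(x_0,y_0)\in V^{-1}(\{0\})$; by definition this means $u_j(x_0)=u_j(y_0)$ for every $j$. Discarding the constant $u_j$ (for which $V_j:=(u_j(x)-u_j(y))^2\equiv 0$), write $m_j(z)\in\NN\cup\{\infty\}$ for the order of vanishing at $z$ of $u_j(\cdot)-u_j(z)$ for a non-constant $u_j$. The overlapping-critical-point hypothesis, applied at $x_0$ and at $y_0$, gives $n_1:=\min_j m_j(x_0)\le n_0+1$ and $n_2:=\min_j m_j(y_0)\le n_0+1$; fix minimisers $j_1$ at $x_0$ and $j_2$ at $y_0$. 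I then split into three exhaustive cases: (i) $\min(n_1,n_2)=1$; (ii) $n_1,n_2\ge 2$ and some single index $j$ has both $m_j(x_0)\le n_0+1$ and $m_j(y_0)\le n_0+1$; (iii) $n_1,n_2\ge 2$ and for every $j$ either $m_j(x_0)>n_0+1$ or $m_j(y_0)>n_0+1$. These will produce \ref{C2}, \ref{C3} and \ref{C1} respectively.

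In case (i), say $n_1=1$ (the case $n_2=1$ being symmetric by the $x\leftrightarrow y$ symmetry of $V$), so that $u_{j_1}'(x_0)\neq 0$. The map $\Phi_{\mathbf{x}}(x,y):=(y,\;u_{j_1}(x)-u_{j_1}(y))$ has Jacobian determinant $-u_{j_1}'(x)$, which is non-zero near $\mathbf{x}$, so $\Phi_{\mathbf{x}}$ restricts to a $C^\infty$-diffeomorphism from a neighbourhood $B_{\mathbf{x}}$ of $\mathbf{x}$ onto a neighbourhood $\mathcal U_{\mathbf{x}}$ of $\Phi_{\mathbf{x}}(\mathbf{x})$. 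Writing $(\zeta,\eta)=\Phi_{\mathbf{x}}(x,y)$ one gets $V_{j_1}\circ\Phi_{\mathbf{x}}^{-1}(\zeta,\eta)=\eta^2$, hence $\{V_{j_1}=0\}\cap B_{\mathbf{x}}=\Phi_{\mathbf{x}}^{-1}\{\eta=0\}$ and $\partial_\eta^2\big(V_{j_1}\circ\Phi_{\mathbf{x}}^{-1}\big)(\zeta,0)=2>0$, which is \ref{C2} (when $x_0=y_0$ this is just a straightening of the diagonal, on which $V$ vanishes).

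In case (ii), pick such a $j$ and set $p:=m_j(x_0)\le n_0+1$, $q:=m_j(y_0)\le n_0+1$ (both $\ge 2$ since $n_1,n_2\ge 2$). Then $u_j$ has a critical point of order $p-1$ at $x_0$ and of order $q-1$ at $y_0$, so Lemma~\ref{lemma:LocallyPolynomial}, applied in each variable after translating $x_0,y_0$ to the origin, supplies one-dimensional diffeomorphisms $\psi_1,\psi_2$ with $u_j\circ\psi_1^{-1}(x)=u_j(x_0)+\mathrm{sign}(c_1)(x-x_0)^p$ and $u_j\circ\psi_2^{-1}(y)=u_j(y_0)+\mathrm{sign}(c_2)(y-y_0)^q$, where $c_1,c_2\neq 0$ are the leading Taylor coefficients. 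Setting $\Psi_{\mathbf{x}}:=\psi_1\times\psi_2$ and using $u_j(x_0)=u_j(y_0)$, one line of algebra gives $V_j\circ\Psi_{\mathbf{x}}^{-1}(x,y)=\big((x-x_0)^p\pm(y-y_0)^q\big)^2$, the sign being determined by $\mathrm{sign}(c_1c_2)$; since $V\ge V_j$ pointwise and $p,q\le n_0+1$, this is \ref{C3}. In case (iii), choose $j$ with $p:=m_j(x_0)\le n_0+1$ — then necessarily $m_j(y_0)\ge n_0+2$ — and $j'$ with $q:=m_{j'}(y_0)\le n_0+1$, so that $m_{j'}(x_0)\ge n_0+2$. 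Taylor's theorem with remainder gives, on a small box around $\mathbf{x}$, $|u_j(x)-u_j(x_0)|\ge \tfrac12|c_1|\,|x-x_0|^{p}$ and $|u_j(y)-u_j(y_0)|\le C|y-y_0|^{n_0+2}$; combining with the elementary bound $(A-B)^2\ge\tfrac12A^2-B^2$ and $u_j(x_0)=u_j(y_0)$ yields $V_j(x,y)\ge \tfrac{c_1^2}{8}|x-x_0|^{2p}-C^2|y-y_0|^{2n_0+4}$, and symmetrically $V_{j'}(x,y)\ge \tfrac{(c_1')^2}{8}|y-y_0|^{2q}-C^2|x-x_0|^{2n_0+4}$. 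Because $2p,2q\le 2n_0+2<2n_0+4$, on a possibly smaller box the genuine term in each line absorbs the competing error, whence $V\ge V_j+V_{j'}\ge c\big(|x-x_0|^{2p}+|y-y_0|^{2q}\big)\ge c'\big(|x-x_0|^{2n_0+2}+|y-y_0|^{2n_0+2}\big)$; in particular $\mathbf{x}$ is isolated in $V^{-1}(\{0\})$, which is \ref{C1}.

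I expect case (iii) to be the real obstacle. The point is that the function witnessing the overlap condition at $x_0$ may be arbitrarily flat at $y_0$ (and vice versa), so the lower bound must be built from two \emph{different} functions, each ``large'' in one variable and contributing only an error of order $2n_0+4>2n_0+2$ in the other — this gap is exactly what makes the Taylor-remainder absorption (uniformly over the box) succeed, and it is what forces the isolated-point alternative. The remaining work is purely bookkeeping: matching the $\pm$-conventions, the translations and the regularity of the local diffeomorphisms to the normalisations used in Lemmas~\ref{lemma:schrodingerScaling1}--\ref{lemma:LocallyPolynomial}.
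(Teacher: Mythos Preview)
Your proof is correct and follows essentially the same approach as the paper: the three-way split into (i) some $u_j'$ nonvanishing at $x_0$ or $y_0$ (giving \ref{C2} via the explicit straightening diffeomorphism), (ii) a single $j$ with both orders $\le n_0+1$ (giving \ref{C3} via Lemma~\ref{lemma:LocallyPolynomial} applied in each variable), and (iii) the mismatched case where two different indices must be combined (giving \ref{C1} by Taylor-remainder absorption, exploiting $2n_0+4>2n_0+2$), is exactly the paper's case analysis. Your verification of $\partial_\eta^2(V_{j_1}\circ\Phi_{\mathbf{x}}^{-1})(\zeta,0)=2$ directly from $V_{j_1}\circ\Phi_{\mathbf{x}}^{-1}=\eta^2$ is in fact tidier than the paper's chain-rule computation to the same end.
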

\begin{proof}
We begin by studying the degenerate set 
\begin{equation}S=\{(x,y):u_j(x)=u_j(y), \ u'_j(x)=u'_j(y)=0 \ \forall j\}\:.
\end{equation}
Let $\mathbf{x}=(x_0,y_0) \in S$. There are now two options. Either, there exists $j \in \mathbb{N}$ so that $u_j(x) \sim c_1(x-x_0)^{n_1}$, $u_j(y) \sim c_2(y-y_0)^{n_2}$, for $n_1, n_2 \leq n_0+1$, or for all $j \in \mathbb{N}$, at least one of $n_1, n_2$ as above is strictly larger than $n_0+1$. In the first case, Lemma~\ref{lemma:LocallyPolynomial} implies the existence of $C^1$ diffeomorphisms $\psi_{x_0}, \psi_{y_0}$, so that $u_j(\psi_{x_0}^{-1}(x))-u_j(\psi_{y_0}^{-1}(y))=(x-x_0)^{n_1} \pm (y-y_0)^{n_2}$. In fact, the maps $\phi_{x_0},\phi_{y_0}$ induce a diffeomorphism via $\Psi_\mathbf{x}=(\psi_{x_0},\psi_{y_0})$, and so we are in case \ref{C3} of Lemma~\ref{lemma:structure of zero set}. Suppose next that for each $j$, at least one of $n_1, n_2$ is strictly larger than $n_0+1$. In this case, we claim that the zero $\mathbf{x}=(x_0,y_0)$ of $V(x,y)$ must be isolated. Indeed, since the maximal order of overlapping zeros of the $u_j$ is $n_0$, there must exist $j_1,j_2$, so that 
\begin{equation*}
u_{j_1}(x) \sim c_1(x-x_0)^{n_1}, \quad u_{j_1}(y)\sim c_2(y-y_0)^{n_2}, \quad u_{j_2}(x) \sim c_3(x-x_i)^{n_3}, \quad u_{j_2}(y)\sim c_4(y-y_i)^{n_4}\:,
\end{equation*}
where $n_1, n_4 \leq n_0+1$, $n_2, n_3 >n_0+1$. Then, there holds the lower bound for $(x,y)$ near $\mathbf{x}$ (changing coordinates so $\mathbf{x}=0$)
\begin{align*}
& (u_{j_1}(x)-u_{j_1}(y))^2+(u_{j_2}(x)-u_{j_2}(y))^2\\
&=(c_1x^{n_1}-c_2y^{n_2}+\mathcal{O}(x^{n_1+1})+\mathcal{O}(y^{n_2+1}))^2+(c_3x^{n_3}-c_4y^{n_4}+\mathcal{O}(x^{n_3+1})+\mathcal{O}(y^{n_4+1}))^2\\
&=c_1^2 x^{2n_1}+c_4^2 y^{2n_4}+\mathcal{O}(x^{2n_1+1})+\mathcal{O}(y^{2n_4+1})+\mathcal{O}(x^{n_1}y^{n_2})+\mathcal{O}(x^{n_3}y^{n_4})\:.
\end{align*}
Noting that $x^{n_1}y^{n_2} \leq \delta x^{2n_1}+C(\delta) y^{2n_2}$, $n_2 > n_4$, and that a similar bound works for $x^{n_3}y^{n_4}$, we actually obtain a lower bound of the form 
\begin{equation}
(u_{j_1}(x)-u_{j_1}(y))^2+(u_{j_2}(x)-u_{j_2}(y))^2 \geq \frac{c_1^2}{2}(x-x_0)^{2n_1}+\frac{c^2_4}{2}(y-y_0)^{2n_4}>0\:,
\end{equation}
for all $x,y$ in a punctured neighbourhood of $\mathbf{x}$. Hence, the zero $\mathbf{x}$ is indeed isolated for $V$, and since $n_1, n_4 \leq n_0+1$, the lower bound of \ref{C1} follows readily. Finally, suppose $\mathbf{x} \notin S$. Then, there exists $j \in \mathbb{N}$ so that say $u'(y_0) \neq 0$. By the inverse function theorem, the map $\Phi_{\mathbf{x}}(x,y)=(x,u_j(x)-u_j(y))$ is a diffeomorphism in a neighbourhood of $\mathbf{x}$ so that $V_j^{-1}(\{0\}) =\Phi_{\mathbf{x}}^{-1}\{(\zeta,0)\}$ close to $\mathbf{x}$. Furthermore, we compute  
\begin{align}
&\partial^2_\eta V_j(\Phi_{\mathbf{x}}^{-1}(\zeta,0))=\partial_x^2 V_j(\Phi_{\mathbf{x}}^{-1}(\zeta,0))(\partial_\eta x)^2\\
+&2\partial_x \partial_y V_j(\Phi_{\mathbf{x}}^{-1}(\zeta,0))\partial_\eta x\partial_\eta y+\partial_y^2 V_j(\Phi_{\mathbf{x}}^{-1}(\zeta,0))(\partial_\eta y)^2\:,
\end{align}
where we have set $(x,y)=\Phi^{-1}_{\mathbf{x}}(\zeta,\eta)$, and used $\partial_x V_j(\Phi_{\mathbf{x}}^{-1}(\zeta,0))=\partial_y V_j(\Phi_{\mathbf{x}}^{-1}(\zeta,0))=0$. Finally, we compute
\begin{equation}
D\Phi_{\mathbf{x}}(x,y)=\begin{pmatrix}
1 & 0\\
u'_j(x) & -u_j'(y)
\end{pmatrix}\:,
\end{equation}
and so by the inverse function theorem, there holds 
\begin{equation}
D\Phi^{-1}_{\mathbf{x}}(x,y)=\frac{1}{u'_j(y)}\begin{pmatrix}
u_j'(y) & 0\\
u_j'(x) & -1
\end{pmatrix}\:,
\end{equation}
so that in particular $\partial_\eta x=0$, $\partial_\eta y=-(u_j'(y))^{-1}$. Hence there holds 
\begin{equation}
\partial^2_\eta V_j(\Phi_{\mathbf{x}}^{-1}(\zeta,0))=2(u_j'(y))^2(-(u_j'(y))^{-1})^2=2\:.
\end{equation}
\end{proof}
Finally, we have all the ingredients we need to prove the following.
\begin{theorem}
Let $\{u_j(x)\}_{j}$ be a countable set of $C^{\infty}$ functions on the torus, with a finite number of maximal overlapping critical point of order $n_0$, (i.e. for all $x \in \mathbb{T}$, there exists $j \in \mathbb{N}$ and $n \leq n_0$ so that $u_j^{(n+1)}(x) \neq 0$). Let $V=\sum_{j}(u_j(x)-u_j(y))^2$, and set $H=-\Delta+\lambda^2 V$. Then, there exists a constant $c>0$ depending only on $\{u_j\}$, so that the smallest eigenvalue of $H$ on $L^2(\mathbb{T}^2)$ is bounded below by $c\lambda^{\frac{2}{n_0+2}}$, for all $\lambda$ sufficiently large.
\end{theorem}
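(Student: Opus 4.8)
The plan is to deduce the global spectral lower bound from a finite collection of local estimates via the IMS localization formula (Lemma~\ref{IMS}). Since $V$ is continuous on the compact torus, its zero set $Z=V^{-1}(\{0\})$ is compact; around each point of $Z$ we invoke the structure Lemma~\ref{lemma:structure of zero set}. For $\eps_0>0$ small enough, the open set $\{V>\eps_0\}$ together with finitely many of the structure-lemma neighborhoods (which cover $Z$) forms a finite open cover of $\mathbb{T}^2$; let $\{J_i^2\}_{i=1}^N$ be a smooth partition of unity subordinate to it, and set $C_0:=\sup_{\mathbb{T}^2}\sum_i|\nabla J_i|^2<\infty$, a fixed constant independent of $\lambda$. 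Writing $E_i(\lambda)$ for the infimum of $\langle g,Hg\rangle/\|g\|_{L^2}^2$ over nonzero $g$ supported in the $i$-th cover element, Lemma~\ref{IMS} gives, for all $f\in H^1(\mathbb{T}^2)$,
\[
\langle f,Hf\rangle=\sum_i\langle J_i f,H(J_i f)\rangle-\sum_i\langle f,|\nabla J_i|^2 f\rangle\ge\bigl(\min_i E_i(\lambda)-C_0\bigr)\|f\|_{L^2}^2\;,
\]
using $\sum_i J_i^2=1$. It therefore suffices to show $E_i(\lambda)\ge c\,\lambda^{2/(n_0+2)}$ for each $i$ and all large $\lambda$; absorbing the fixed constant $C_0$ then yields the claim.

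\textbf{The easy patches and case \ref{C1}.} On the patch where $V>\eps_0$ one discards $\int|\nabla g|^2\ge 0$ to get $E_i(\lambda)\ge\eps_0\lambda^2$, which dominates $\lambda^{2/(n_0+2)}$ for large $\lambda$. On a patch of type \ref{C1} one has $V(x,y)\ge c(|x-x_0|^{2n_0+2}+|y-y_0|^{2n_0+2})$; retaining only the $x$-directional energy and potential and extending $g$ by zero to $\mathbb{R}^2$, one is reduced to the one-dimensional operator $-\partial_x^2+c\lambda^2|x-x_0|^{2n_0+2}$, which the unitary rescaling $g(x)\mapsto\lambda^{1/(2n_0+4)}g(\lambda^{1/(n_0+2)}x)$ converts into $\lambda^{2/(n_0+2)}\bigl(-\partial_x^2+c|x|^{2n_0+2}\bigr)$; the latter has a strictly positive bottom eigenvalue, so $E_i(\lambda)\ge c'\lambda^{2/(n_0+2)}$.

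\textbf{The degenerate cases \ref{C2} and \ref{C3}.} These are handled by the changes of variables from Lemmas~\ref{lemma:structure of zero set} and \ref{lemma:LocallyPolynomial}, after which $V$ is dominated below by one of the model potentials of Lemmas~\ref{lemma:schrodingerScaling1}--\ref{lemma:Schrodinger scaling 2}. In case \ref{C2}, push forward by the $C^2$ diffeomorphism $\Phi_{\mathbf{x}}$: since $D\Phi_{\mathbf{x}}^{\pm 1}$ and the Jacobian are bounded above and below on the compact patch, $\langle g,Hg\rangle$ is comparable, up to fixed constants, to $\int|\nabla h|^2+\lambda^2\tilde V|h|^2$ with $h=g\circ\Phi_{\mathbf{x}}^{-1}$; by Lemma~\ref{lemma:structure of zero set}, $\tilde V\ge c\eta^2$ near $\{\eta=0\}$ (its linear part vanishes and $\partial_\eta^2=2$ there), so discarding $|\partial_\zeta h|^2$ and comparing with the harmonic oscillator $-\partial_\eta^2+c\lambda^2\eta^2$ gives $E_i(\lambda)\ge c'\lambda\ge c'\lambda^{2/(n_0+2)}$ for $\lambda\ge 1$. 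In case \ref{C3}, the $C^1$ diffeomorphism $\Psi_{\mathbf{x}}$ (again bi-Lipschitz on the patch) reduces us, up to bounded constants, to $\int|\nabla h|^2+\lambda^2\tilde V|h|^2$ with $\tilde V\ge((x-x_0)^{n_1}\pm(y-y_0)^{n_2})^2$, $n_1,n_2\le n_0+1$; extending $h$ by zero and applying Lemma~\ref{lemma:Schrodinger scaling 2} yields a lower bound $\propto\lambda^{2n_2/(2n_2+n_1 n_2-n_1)}\|h\|_{L^2}^2$, and the elementary inequality recorded after that lemma shows $\tfrac{2n_2}{2n_2+n_1 n_2-n_1}\ge\tfrac{2}{n_0+2}$, so again $E_i(\lambda)\ge c'\lambda^{2/(n_0+2)}$ for $\lambda\ge 1$. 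Taking the minimum over the finitely many patches and absorbing $C_0$ finishes the argument.

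\textbf{Main obstacle.} The principal technical point is the bookkeeping in cases \ref{C2} and \ref{C3}: one must verify that the changes of variables from Lemmas~\ref{lemma:structure of zero set} and \ref{lemma:LocallyPolynomial}, which are only $C^2$ respectively $C^1$, distort the Dirichlet energy $\int|\nabla g|^2$ by at most bounded multiplicative factors \emph{uniformly} over the relevant compact patches, so that the model lower bounds genuinely transfer; and that the worst local exponent --- attained in case \ref{C3} with $n_1=n_2=n_0+1$ --- is exactly $\tfrac{2}{n_0+2}$, which is what pins down the sharp rate. The remaining ingredients (compactness of $Z$ and finiteness of the cover, the $\lambda$-independence of the IMS error $C_0$, and the $V$-bounded-below patch) are routine.
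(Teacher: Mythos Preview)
Your proposal is correct and follows the paper's architecture closely: IMS localization over a finite cover built from the structure lemma, then local model-operator bounds in each of the three cases plus the bulk. The handling of the bulk, of case~\ref{C1}, and of case~\ref{C3} (change of variables, bi-Lipschitz control of the Dirichlet form, reduction to Lemma~\ref{lemma:Schrodinger scaling 2}) is essentially identical to the paper's.

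The one substantive difference is your treatment of case~\ref{C2}, which is in fact cleaner than the paper's. The paper introduces a second, $\lambda$-dependent partition of unity $\{v_0,v_1\}$ localizing to $|\eta|\lesssim\lambda^{-\alpha}$ and its complement, Taylor-expands $V_j$ in $\eta$ on $\supp(v_1)$, bounds $V_j\gtrsim\lambda^{-2\alpha}$ on $\supp(v_0)$, and then tunes $\alpha=2/5$ so that the Taylor remainder $O(\lambda^{2-3\alpha})$ and the extra IMS error $O(\lambda^{2\alpha})$ are both subdominant to the harmonic-oscillator bound $\lambda$. You bypass all of this by noting that after the straightening $\Phi_{\mathbf x}(x,y)=(x,u_j(x)-u_j(y))$ one has $V\ge V_j=\eta^2$ \emph{exactly} on the entire patch, so the comparison with $-\partial_\eta^2+c\lambda^2\eta^2$ is immediate. (Your phrase ``near $\{\eta=0\}$'' undersells this; the point is that the quadratic lower bound is global on the compact patch, which is what makes the $\lambda$-dependent sub-localization unnecessary.) Both routes give $E_i(\lambda)\gtrsim\lambda$ for \ref{C2}-patches; yours just gets there faster.
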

\begin{proof}
The proof will be divided into a series of steps. Our strategy may be summarised as follows: For each point in the zero locus of $V$, Lemma~\ref{lemma:structure of zero set} yields a local polynomial approximation to the behaviour of $V$. By the IMS localisation formula from Lemma~\ref{IMS}, it then suffices to study only the spectral asymptotics of the localised problems where the approximations from Lemma~\ref{lemma:structure of zero set} are valid. But by Lemma~\ref{lemma:Schrodinger scaling 2}, we know precisely the spectral scaling associated to Schr\"odinger operators with polynomial potentials, and hence we may deduce the required spectral scaling for each localised problem, which then completes the proof. \\
 For the sake of easing readability, we will frequently use the notation $a(f,\lambda)\lesssim b(f,\lambda)$ to mean that there exists a \emph{universal} constant $c>0$ independent of $\lambda >0, f \in H^1$, so that $a(f,\lambda) \leq c b(f,\lambda)$.

\medskip

\emph{Step 1:}
By Lemma \ref{lemma:structure of zero set}, $V^{-1}\{0\}$ may be broken down into a disjoint union 
\begin{equation}
V^{-1}\{0\}=\cup_{a=1}^n \{p_a\} \cup \Gamma_2 \cup \Gamma_3
\end{equation}
where the $\{p_a\}$ are the isolated zeros from \ref{C1}, and $\Gamma_2, \Gamma_3$ correspond to points \ref{C2}, \ref{C3} from Lemma~\ref{lemma:structure of zero set} respectively.

For any point $(x_p,y_p) \in \Gamma_2$, there exists an open ball $B_{p} \ni (x_p,y_p)$ and a diffeomorphism $\Phi_p : B_{p} \to \mathcal{U}_{p}$, for some open $\mathcal{U}_p$ with coordinates $(\zeta,\eta)$, so that $\Phi_p(V^{-1}\{0\} \cap B_{p})=\{\eta=0\}\cap \mathcal{U}_p$. Similarly, for any $q \in \Gamma_3$, we obtain a diffeomorphism $\Psi_q:B_q \to \mathcal{U}_q$ so that $V(\Psi_q^{-1}(x,y)) \geq ((x-x_q)^{n_1} \pm (y-y_q)^{n_2})^2$, for $n_1,n_2 \leq n_0+1$. Finally, for $p_a, a=1, \dots n$, there exists $B_a$ so that $V(x,y) \geq c_0((x-x_a)^{2n_0+2}+(y-y_a)^{2n_0+2})$ on $B_a$. Therefore, it holds that
\begin{equation}
V^{-1}\{0\} \subset \bigcup_{a=1}^n B_a \cup \bigcup_{p \in \Gamma_2}B_p \cup \bigcup_{q \in \Gamma_3} B_q\:.
\end{equation}
Since $V^{-1}\{0\}$ is compact, we may find a finite subcover $\{B_j\}_{j=1}^n \cup\{B_{p_i}\}_{i=1}^m \cup\{B_{q_\ell}\}_{\ell=1}^k$, and an open set $\mathcal{U}$ so that
\begin{equation}
V^{-1}\{0\} \subset \mathcal{U} \Subset  \bigcup_{a=1}^n B_a \cup \bigcup_{i=1}^m B_{p_i} \cup \bigcup_{\ell=1}^k B_{q_\ell}\:.
\end{equation}
This allows us to construct a smooth partition of unity $\{\omega_a^2\}_{a=1}^n \cup \{\phi_p^2\}_{p=1}^m \cup \{\psi^2_q\}_{q=1}^k$ of $\mathcal{U}$ subordinate to the cover $\{B_j,B_p,B_q\}$. Finally, setting $\phi_0^2=1-\sum_{a}w_a^2-\sum_p \phi_p^2 -\sum_q \psi_q^2$, we have obtained a smooth partition of unity of $[0,2\pi]$. In fact, since all functions in the partition of unity have compact support, they may be extended periodically to yield a smooth partition of unity of the $2$-torus.
\medskip

\emph{Step 2:}
Appealing to the IMS localisation formula, we see that 
\begin{equation}
H=\sum_{a}\omega_a H \omega_a+\sum_{p}\phi_pH\phi_p+\sum_{q}\psi_qH\psi_q-\sum_{a}|\nabla \omega_a|^2-\sum_{p}|\nabla \phi_p|^2-\sum_{q}|\nabla \psi_q|^2\:.
\end{equation}
We claim that there exists a constant $c>0$, so that, for all $a,p,q$ it holds that
\begin{equation}
\label{eq:IMS lower bound}
\omega_a H \omega_a \geq c\lambda^{\frac{2}{n_0+2}}\omega_a^2, \quad \phi_p H \phi_p \geq c\lambda^{\frac{2}{n_0+2}}\phi_p^2, \quad \psi_q H\psi_q \geq c\lambda^{\frac{2}{n_0+2}}\psi_q^2\:,
\end{equation}
in the sense of forms. Indeed, if this holds, then we observe that since $\sum_{a}\omega_a^2+\sum_{p}\phi_p^2+\sum_{q}\psi_q^2=1$, it holds that
\begin{equation}
H \geq c\lambda^{\frac{2}{n_0+2}} -\sum_{a}|\nabla \omega_a|^2-\sum_{p}|\nabla \phi_p|^2-\sum_{q}|\nabla \psi_q|^2 \geq \frac{1}{2}c\lambda^{\frac{2}{n_0+2}}\:,
\end{equation}
for all $\lambda$ sufficiently large, completing the proof. Therefore, we now aim to prove \eqref{eq:IMS lower bound}.

\medskip

\emph{Step 3:}
Let us deal with the $\phi_p$, $p \neq 0$ first. By assumption, there exists $j \in \mathbb{N}$, and a diffeomorphism $\Phi_p :B_p \to \mathcal{U}_p$, $\Phi_p(x,y)=(\zeta(x,y),\eta(x,y))$ so that $\Phi_p(B_p \cap \{V_j=0\})=\mathcal{U}_p \cap \{\eta=0\}$. In particular, we shall now define a partition of unity of $\supp(\phi_p)$. Indeed, fix some $\alpha>0$ which will be chosen later. Then, let $\rho :\mathbb{T} \to [0,1]$ be a smooth function so that $\rho^\lambda \equiv 1 $ on $[-\lambda^{-\alpha},\lambda^{-\alpha}]$, and $\rho^\lambda \equiv 0$ outside $(-2\lambda^{-\alpha},2\lambda^{-\alpha})$. In particular, note that $\nabla \rho^\lambda  \sim \lambda^{\alpha}$ as $\lambda \to \infty$. We then set $v_1(x,y)=\rho(\eta(x,y))$, where we recall that $\eta(x,y)$ is the second component of the diffeomorphism $\Phi_p$. Finally, set $v_0^2=1-v_1^2$. Note now that in the sense of forms it holds that $\phi_p H \phi_p \geq \phi_p H_j \phi_p$, where $H_j=-\Delta+\lambda^2 V_j(x,y)$. Further, appealing to the IMS localisation formula, it holds that
\begin{equation}
\label{eq:IMS expansion 1}
\phi_p H_j \phi_p =\phi_p v_1 (H_j-H_{j,1})v_1 \phi_p +\phi_p v_1 H_{j,1}v_1 \phi_p+\phi_p v_0 H_j v_0 \phi_p-\phi_p^2(|\nabla v_0|^2+|\nabla v_1|^2)\:,
\end{equation}
where $H_{j,1}$ is the Schrodinger operator defined by Taylor expanding $V_j$ around $\eta=0$ in the variables $(\zeta,\eta)$, i.e.
\begin{equation}
H_{j,1}=-\Delta+\lambda^2 \Tilde{V}(x,y)\:,
\end{equation}
where $\Tilde{V}(x,y)=\frac{1}{2}\eta^2 \partial^2_\eta V_j(\Phi_p^{-1}(\zeta,0))$. Fix now any function $f(x,y) \in H^1(\mathbb{T}^2)$. Then, it holds that 
\begin{equation}
\langle f,\phi_p v_1 H_{j,1}v_1 \phi_pf \rangle_{L^2} =\int_{\mathbb{T}^2}|\nabla (\phi_p v_1f)|^2+\lambda^2 |\phi_pv_1 f|^2 \Tilde{V}(x,y)\dd x \dd y\:.
\end{equation}
Let us focus for now on the term 
\begin{equation}
\int_{\mathbb{T}^2} |\phi_pv_1 f|^2 \Tilde{V}(x,y)\dd x \dd y\:.
\end{equation}
Under the change of variables $(x,y)=\Phi_p^{-1}(\zeta,\eta)$, this is equal to 
\begin{align}
&\frac{1}{2}\int_{\mathcal{U}_p}|\det(D\Phi_p^{-1})||(\phi_p v_1 f)(\Phi_p^{-1}(\zeta,\eta)|^2\eta^2 \partial_\eta^2 V(\Phi_p^{-1}(\zeta,0)) \dd \eta \dd \zeta \\
&\gtrsim \int_{\mathcal{U}_p}|(\phi_p v_1 f)(\Phi_p^{-1}(\zeta,\eta)|^2\eta^2 \dd \eta \dd \zeta\:,
\end{align}
where the inequality follows since both $\inf_{\mathcal{U}_p}|\det(D\Phi_p^{-1})|>0$ and $\inf_{\mathcal{U}_p}\partial_\eta^2 V(\Phi_p^{-1}(\zeta,0))>0$. Finally, since $\supp(\chi_p) \subset \mathcal{U}_p$, this lower bound may be replaced by 
\begin{equation}
 \int_{\mathbb{R}^2}|(\phi_p v_1 f)(\Phi_p^{-1}(\zeta,\eta)|^2\eta^2  \dd \eta \dd \zeta\:.
\end{equation}
Next, consider the term 
\begin{equation}
\int_{\mathbb{T}^2}|\nabla (\phi_p v_1 f)(x,y)|^2 \dd x \dd y\:.
\end{equation}
We once again change variables $(x,y) =\Phi_p^{-1}(\zeta,\eta)$, and bounding the Jacobian determinant below as before, we observe a lower bound of 
\begin{equation}
\int_{\mathbb{T}^2}|\nabla (\phi_p v_1 f)(x,y)|^2 \dd x \dd y \gtrsim \int_{\mathbb{T}^2}|\nabla_{x,y} (\phi_p v_1 f)(\Phi_p^{-1}(\zeta,\eta))|^2 \dd \zeta \dd \eta\:.
\end{equation}
However, a computation yields that for any smooth function $g$, there holds that
\begin{equation}
|\nabla_{x,y} g(\Phi_p^{-1}(\zeta,\eta))|^2=\nabla_{\eta,\zeta}[g(\Phi^{-1}_p(\zeta,\eta)]^TA^T A \nabla_{\eta,\zeta}[g(\Phi^{-1}_p(\zeta,\eta)]\:,
\end{equation}
where $A$ is a matrix given by 
\begin{equation}
A =\frac{1}{\displaystyle \partial_\eta x \partial_\zeta y-\partial_\zeta x \partial_\eta y} \begin{pmatrix}
\displaystyle \partial_\zeta y & \displaystyle-\partial_\eta y \\
\displaystyle -\partial_\zeta x& \partial_\eta x
\end{pmatrix}\:,
\end{equation}
where we have denoted $(x(\zeta,\eta),y(\zeta,\eta))=\Phi_p^{-1}(\zeta,\eta)$. In particular, it is clear that $A^T A$ is a positive semi-definite matrix, and further its determinant is given by 
\begin{equation}
\det(A^T A)=(\partial_\eta x\partial_\zeta y-\partial_\eta y\partial_\zeta x)^{-2}>0\:.
\end{equation}
Hence, $A^TA$ is in fact strictly positive for any $(\zeta,\eta) \in \mathcal{U}_{p}$. Furthermore, since its components are smooth in $(\zeta,\eta)$, by standard spectral perturbation theory (see e.g. \cite{K76}), the eigenvalues of $A^T A$ are continuous in $(\zeta,\eta)$, thus attaining a strictly positive minimum on $\cl(\supp(\phi_p))$. Hence, overall we see that there exists a universal constant $c>0$ so that 
\begin{equation}
|\nabla_{x,y} g(\Phi_p^{-1}(\zeta,\eta))|^2 \geq c|\nabla_{\eta,\zeta}[g(\Phi^{-1}_p(\zeta,\eta)]|^2\:,
\end{equation}
for all $g$ with support contained in $\cl(\supp(\phi_p))$. Therefore, we observe the lower bound
\begin{equation}
\int_{\mathbb{T}^2}|\nabla (\phi_p v_1 f)(x,y)|^2 \dd x \dd y \gtrsim \int_{\mathbb{R}^2}|\nabla [(\phi_p v_1 f)(\Phi_p^{-1}(\zeta,\eta))]|^2 \dd \zeta \dd \eta\:.
\end{equation}
Thus, we have shown that for any $f \in H^1(\mathbb{T}^2)$, there holds that 
\begin{equation}
\langle f,\phi_p v_1 H_{j,1}v_1 \phi_pf \rangle_{L^2(\mathbb{T}^2)}  \gtrsim  \langle \phi_pv_1 f(\Phi^{-1}_p),H_{j,2} \phi_pv_1 f(\Phi^{-1}_p)\rangle_{L^2(\mathbb{R}^2)}\:,
\end{equation}
where $H_{j,2}$ is the Schr\"odinger operator on $L^2(\mathbb{R}^2)$ defined by 
\begin{equation}
H_{j,2}=-\Delta_{\zeta,\eta}+\lambda^2  \eta^2\:.
\end{equation}
The spectrum of $H_{j,2}$ is strictly positive, and scales exactly like $\lambda$ as $\lambda \to \infty$. Therefore, it holds that 
\begin{equation}
\langle \phi_pv_1 f(\Phi^{-1}_p),H_{j,2}\phi_pv_1 f(\Phi^{-1}_p)\rangle_{L^2(\mathbb{R}^2)} \gtrsim \lambda \int_{\mathbb{R}^2}|\phi_p vf(\Phi_p^{-1})|^2 \dd \eta \dd \zeta\:.
\end{equation}
Changing variables back and bounding the resulting Jacobian below on $\supp(\phi_p)$ as before, we finally obtain that there exists a constant $c_p>0$ so that 
\begin{equation}
\phi_p v_1 H_j v_1 \phi_p \geq c_p \lambda \phi_p^2 v_1^2\:,
\end{equation}
as $\lambda \to \infty$. Next, we shall show that, up to making a good choice of $\alpha$, in some sense the remaining terms in \eqref{eq:IMS expansion 1} are small. We first deal with 
\begin{equation}
\phi_p v_1 (H_j-H_{j,1})v_1\phi_p\:.
\end{equation}
This is nothing but a multiplication operator. Furthermore, on $\supp(v_1)$, there holds that $|\eta|\leq 2 \lambda^{-\alpha}$. Therefore, recalling that $\Tilde{V}$ is simply the second order Taylor expansion of $V_j$, by Taylor's remainder formula there holds that $|H_j-H_{j,1}| \lesssim \lambda^{-3\alpha}$. For the term $\phi_p v_0 H_j v_0 \phi_p$, we once again observe via a Taylor expansion that $\lambda^{-2\alpha} \lesssim V$. Hence, there holds that
\begin{equation}
\lambda^{2-2\alpha}(\phi_p v_0)^2 \lesssim \phi_p v_0 H_j v_0 \phi_p\:.
\end{equation}
Finally, as remarked previously, $|\nabla v_0|^2+|\nabla v_1|^2 \lesssim \lambda^{2\alpha}$. Therefore, we finally see from \eqref{eq:IMS expansion 1} that 
\begin{equation}
\phi_p^2(\lambda v_1^2+\lambda^{2-2\alpha}v_0^2-\lambda^{2-3\alpha}-\lambda^{2\alpha}) \lesssim \phi_p H_j \phi_p\:.
\end{equation}
Therefore, as long as $2-2\alpha \geq 1$, $2-3\alpha<1$, $2\alpha <1$, we see that 
\begin{equation}
\phi_p^2 \lambda^{\frac{2}{n_0+2}} \leq \phi_p^2 \lambda \lesssim \phi_p H_j\phi_p \leq \phi_p H \phi_p\:.
\end{equation}
Picking for instance $\alpha=\frac{2}{5}$, these inequalities are satisfied, and hence the claim follows.

\medskip

\emph{Step 4:} We now shall deal with the terms $\psi_q H\psi_q$. In view of Lemmas \ref{lemma:schrodingerScaling1}, \ref{lemma:Schrodinger scaling 2}, \ref{lemma:LocallyPolynomial}, this case will be far easier. Indeed, we recall that by Lemma \ref{lemma:structure of zero set}, there exists a diffeomorphism $\Psi_q: B_{q}\to \mathcal{U}_q$ so that $V(\Psi_q^{-1}(x,y)) \geq ((x-x_q)^{n_1}\pm(y-y_q)^{n_2})^2$, for some $n_1,n_2 \leq n_0+1$. By precisely the same computations as in the previous step, there holds that for any $f \in H^1(\mathbb{T}^2)$
\begin{equation*}
\langle f, \psi_q H\psi_q f\rangle_{L^2(\mathbb{T}^2)} \gtrsim \int_{\mathbb{R}^2}|\nabla [\psi_qf(\Psi_q^{-1})]|^2+\lambda^2 ((x-x_q)^{n_1}\pm (y-y_q)^{n_2})^2|\psi_q f(\Psi_q^{-1})|^2 \dd x \dd y\:.
\end{equation*}
But then, by Lemma \ref{lemma:Schrodinger scaling 2},  since $n_1,n_2 \leq n_0+1$ it holds that 
\begin{equation*}
\int_{\mathbb{R}^2}|\nabla [\psi_qf(\Psi_q^{-1})]|^2+\lambda^2 ((x-x_q)^{n_1}\pm (y-y_q)^{n_2})^2|\psi_q f(\Psi_q^{-1})|^2 \dd x \dd y  \gtrsim \lambda^{\frac{2}{n_0+2}}\int_{\mathbb{R}^2}|\psi_q f(\Psi_q^{-1})|^2 \dd x \dd y\:,
\end{equation*}
which, upon changing back to our original variables, is precisely the lower bound we want. 

\medskip 

\emph{Step 5:} Finally, we discuss the terms corresponding to isolated zeros of the potential, as well as the term corresponding to $\phi_0$. Indeed, consider the term $\omega_a H \omega_a$. Since $V \geq ((x-x_a)^{2n_0+2}+(y-y_a)^{2n_0+2})$ on $\supp(\omega_a)$, we bound 
\begin{equation}
\omega_a H \omega_a \geq \omega_a (-\Delta+\lambda^2 ((x-x_a)^{2n_0+2}+(y-y_a)^{2n_0+2}))\omega_a\:.
\end{equation}
By Lemma~\ref{lemma:Schrodinger scaling 2}, we further know that $-\Delta+\lambda^2 ((x-x_a)^{2n_0+2}+(y-y_a)^{2n_0+2}) \gtrsim \lambda^{\frac{2}{n_0+2}}$ in the sense of forms, and hence we obtain the desired bound for the $\omega_a$. Finally, note that $\supp(\phi_0)$ is strictly separated from $\{V=0\}$, and therefore $V \geq c_0$ on $\supp(\phi_0)$. Therefore, it holds that
\begin{equation}
\phi_0 H \phi_0 \geq \phi_0 (-\Delta+ c_0\lambda^2)\phi_0 \gtrsim  \phi_0^2 \lambda^2\:,
\end{equation}
and hence we have completed the proof. 
\end{proof}

\subsection{A Lower Bound on the Dissipation Rate}
Obtaining a lower bound on the enhanced dissipation rate will once again require studying the spectral asymptotics of certain classes of Schrödinger operators, but this time the situation is much nicer, as we only need to deal with the operator generated by the one-point motion.
\begin{lemma}
Assume that the $\{u_j\}_{j \in \mathbb{N}}$ have a spatially overlapping critical point of order $n_0$ at $y=0$, and assume further that $\sum_{j}\|u_j\|_{C^{2n_0+2}}<\infty$. Then, there exists a family of initial data \(f^{\alpha}_0\) depending on \(\alpha =\frac{|\ell|\sqrt{\kappa}}{\sqrt{\nu}}\), so that for \(\alpha\) large enough, the solution to \eqref{Eq:StochasticShear}
satisfies
\begin{equation}\label{eq:lowerbdden}
\EE\|P_\ell f^\nu(t)\|_{L^2} \geq \|P_\ell f_0^\alpha\|_{L^2} \exp{(-c\nu^{\frac{n_0+1}{n_0+2}}\kappa^\frac{1}{n_0+2}|\ell|^\frac{2}{n_0+2}t)}\:,
\end{equation}
where \(c\) is a constant independent of \(\alpha\).
\end{lemma}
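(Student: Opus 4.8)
The plan is to argue via the \emph{one-point motion} together with Jensen's inequality, rather than via the two-point motion used for the upper bounds. Write $\hat f_\ell(t,y)$ for the $\ell$-th Fourier coefficient of $f^\nu(t,\cdot)$ in $x$, so that $\hat f_\ell$ solves the scalar SPDE obtained from \eqref{Eq:StochasticShear} (the modes in $\ell$ decouple). First I would perform a gauge transformation: replacing $\hat f_\ell$ by $\hat f_\ell\exp\bigl(-i\sqrt{2\kappa}\,\ell\sum_j u_j(0)W^j_t\bigr)$ leaves $|\hat f_\ell(t,y)|$, hence $\|P_\ell f^\nu(t)\|_{L^2_y}$, pointwise unchanged, and turns the equation into the same one with $u_j$ replaced by $u_j-u_j(0)$; the random series $\sum_j u_j(0)W^j_t$ converges since $\sum_j\|u_j\|_{C^{2n_0+2}}<\infty$ forces $\sum_j u_j(0)^2<\infty$. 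So we may assume $u_j(0)=0$ for all $j$. Since the $u_j$ are flat to order $n_0$ at $y=0$, a Taylor estimate, uniform in $j$ by the same summability, gives $W(y):=\sum_j u_j(y)^2\le C_0|y|^{2n_0+2}$ on a fixed neighbourhood of $0$.

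Next I would pass to the Itô form of the equation for $\hat f_\ell$, namely $\dd\hat f_\ell=\nu\partial_y^2\hat f_\ell\,\dd t-\kappa\ell^2 W(y)\hat f_\ell\,\dd t-i\sqrt{2\kappa}\,\ell\sum_j u_j(y)\hat f_\ell\,\dd W^j_t$, where $-\kappa\ell^2 W$ arises from the standard Stratonovich--Itô correction $+\kappa\sum_j u_j^2\partial_x^2 f$. Using the pathwise a priori bound $\|\hat f_\ell(t)\|_{L^2_y}\le\|\hat f_\ell(0)\|_{L^2_y}$ (the transport term is a rotation since $u_j$ is real) to kill the stochastic integral, and Fubini, one checks that $h_\ell(t,y):=\EE\hat f_\ell(t,y)$ is a weak solution of the deterministic parabolic equation $\partial_t h_\ell=-\nu M^\lambda h_\ell$, with $M^\lambda=-\partial_y^2+\lambda^2 W$ on $L^2(\TT)$ and $\lambda^2=\kappa\ell^2/\nu$, i.e.\ $\lambda=\alpha$; uniqueness identifies $h_\ell(t)=e^{-\nu M^\lambda t}h_\ell(0)$. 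Let $\psi^\lambda$ be the positive ground state of $M^\lambda$, normalised in $L^2(\TT)$, with eigenvalue $\mu^\lambda_1\ge 0$, and take $f_0^\alpha(x,y)=e^{i\ell x}\psi^\lambda(y)$. Then $h_\ell(0)=\psi^\lambda$, so $h_\ell(t)=e^{-\nu\mu^\lambda_1 t}\psi^\lambda$, and convexity of $g\mapsto\|g\|_{L^2_y}$ yields
\begin{equation}
\EE\|P_\ell f^\nu(t)\|_{L^2_y}=\EE\|\hat f_\ell(t)\|_{L^2_y}\ \ge\ \|h_\ell(t)\|_{L^2_y}=e^{-\nu\mu^\lambda_1 t}\|\psi^\lambda\|_{L^2_y}=e^{-\nu\mu^\lambda_1 t}\|P_\ell f_0^\alpha\|_{L^2_y}\,.
\end{equation}

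It then remains to bound $\mu^\lambda_1$ \emph{from above}, which—in contrast to the delicate lower bounds of the previous section (Lemmas \ref{lemma:Schrodinger scaling 2}, \ref{IMS})—is an elementary variational estimate: inserting the trial function $\phi_\lambda(y)=\lambda^{1/(2(n_0+2))}\chi\bigl(\lambda^{1/(n_0+2)}y\bigr)$, with $\chi$ a fixed smooth bump of unit $L^2$ norm supported near $0$, into the Rayleigh quotient of $M^\lambda$, and using $W(y)\le C_0|y|^{2n_0+2}$ on $\supp\phi_\lambda$ (valid once $\alpha$ is large enough that $\lambda^{-1/(n_0+2)}$ lies inside the good neighbourhood), gives $\mu^\lambda_1\le C\lambda^{2/(n_0+2)}$ with $C$ depending only on $\{u_j\}$. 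Substituting $\lambda^2=\kappa\ell^2/\nu$ gives $\nu\mu^\lambda_1\le C\,\nu^{(n_0+1)/(n_0+2)}\kappa^{1/(n_0+2)}|\ell|^{2/(n_0+2)}$, which is exactly the claimed exponent with $c=C$ independent of $\alpha$. The main point requiring care is the rigorous justification that $h_\ell$ solves the stated PDE and that expectation commutes with the Itô integral; both follow routinely from the $L^2_y$ a priori estimate, the summability $\sum_j\|u_j\|_\infty^2<\infty$, and Fubini's theorem, so there is no serious analytic obstacle—the content is really just the observation that the one-point generator $M^\lambda$ has bottom eigenvalue of order $\lambda^{2/(n_0+2)}$.
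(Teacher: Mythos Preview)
Your proof is correct and follows essentially the same route as the paper: gauge away $u_j(0)$, observe that $h_\ell=\EE\hat f_\ell$ solves the one-point Schr\"odinger equation $\partial_t h_\ell=-\nu M^\lambda h_\ell$, take the ground state as initial data, apply Jensen, and bound $\mu_1^\lambda$ from above variationally. The only difference is cosmetic: the paper builds its trial function by rescaling an eigenfunction of the model operator $-\partial_y^2+cy^{2n_0+2}$ on $\RR$ and then cutting it off at a second scale $\lambda^{-\beta}$ with $\beta\in\bigl(\tfrac{2n_0+2}{(n_0+2)(2n_0+3)},\tfrac{1}{n_0+2}\bigr)$, whereas you go straight to a rescaled compactly supported bump $\chi(\lambda^{1/(n_0+2)}y)$, which is cleaner and avoids the two-scale bookkeeping.
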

\begin{proof}
Let $f(x,y,t)$ be the solution to the SPDE \eqref{Eq:StochasticShear} with initial condition $f(x,y)$. Set now
$$
v(t,x,y)=f\Bigl(x+\sqrt{2\kappa}\sum_{j\in \mathbb{N}}u_j(0)W^k_t,y,t\Bigr)\:. 
$$
Note that $|P_{\ell}v(t,y)|=|P_{\ell}f(t,y)|$, for all $\ell$. Hence, it suffices to prove the bound \eqref{eq:lowerbdden} for $v$. Now, a direct application of the Itô formula shows that \(v\) satisfies the equation 
\begin{equation}
\dd v=-\sqrt{2\kappa}\sum_{j \in \mathbb{N}} (u_j(y)-u_j(0))\partial_x v\circ \dd W^j_t +\nu \partial^2_y v\, \dd t\:.
\end{equation}
Computing the Stratonovich correction, taking the Fourier transform in \(x\) and then finally taking expectations, we see that \(g_\ell(t,y)=\EE(\hat{v}_\ell(t,y))\) satisfies
\begin{equation}
\label{eq:fourier shear 2}
\partial_t g_\ell =-\ell^2\kappa \sum_{j \in \mathbb{N}}(u_j(y)-u_j(0))^2g_\ell+\nu \partial^2_y g_\ell\:.
\end{equation}
We thus see that \(g_\ell\) satisfies a Schrödinger type equation, with non-negative potential $V(y)=\sum_{j \in \mathbb{N}}(u_j(y)-u_j(0))^2$ that vanishes of order \(2n_0+2\) at \(y=0\). Thus, it is a matter of showing that there exist eigenfunctions of this operator with associated eigenvalues that scale like $\nu^{\frac{n_0+1}{n_0+2}}\kappa^{\frac{1}{n_0+2}}|\ell|^\frac{2}{n_0+2}$.
We consider the operators
\begin{equation}
H=-\partial^2_y+\frac{V^{(2n_0+2)}(0)}{(2n_0+2)!}\lambda^2y^{2n_0+2}
\end{equation}
on the real line with $\lambda^2=\ell^2\kappa\nu^{-1}$. Letting \(p=p(y)\) satisfy \((-\partial^2_y+\frac{V^{(2n_0+2)}(0)}{(2n_0+2)!}y^{2n_0+2})p=\mu p\), normalized to $\|p\|_{L^2(\mathbb{R})}=1$, it is clear that 
\begin{equation}
p^\lambda(y)=p(\lambda^\frac{1}{n_0+2}y)
\end{equation}
satisfies
\begin{equation}
Hp^\lambda=\mu\lambda^\frac{2}{n_0+2}p^\lambda\:.
\end{equation}
Now, let \(\chi(y)\) be a smooth function which equals \(1\) on \([-\frac{4\pi}{5},\frac{4\pi}{5}]\), and vanishes outside \([-\frac{8\pi}{9},\frac{8\pi}{9}]\). Then, for $\beta <\frac{1}{n_0+2}$, consider the function 
\begin{equation}
q^\lambda(y)=\frac{1}{\|\chi(\lambda^{\beta}\cdot)p^\lambda \|_{L^2(\mathbb{T})}}\chi(\lambda^{\beta} y)p^\lambda(y)\:,
\end{equation}
which has compact support on \([-\pi,\pi]\), and thus can be extended to a periodic \(H^1\) function. Then, denoting \(V=\sum_{j \in \mathbb{N}}(u_j(y)-u_j(0))^2\), consider
\begin{equation}
\int_{-\pi}^\pi |\partial_y q^\lambda(y)|^2\dd y+\lambda^2\int_{-\pi}^\pi V(y)(q^\lambda(y))^2\dd y\:.
\end{equation}
We shall see how both of these terms scale in terms of $\lambda$. 
Note first that
\begin{equation}
\|\chi(\lambda^\beta y)p^\lambda(y)\|^2_{L^2(\mathbb{T})} \geq  \int_{-\lambda^{-\beta}\frac{4\pi}{5}}^{\lambda^{-\beta}\frac{4\pi}{5}} |p(\lambda^\frac{1}{n_0+2}y)|^2\dd y =\lambda^{-\frac{1}{n_0+2}}\int_{-\lambda^{\frac{1}{n_0+2}-\beta}\frac{4\pi}{5}}^{\lambda^{\frac{1}{n_0+2}-\beta}\frac{4\pi}{5}} |p(y)|^2\dd y \geq c\lambda^{-\frac{1}{n_0+2}}\:.
\end{equation}
Furthermore, we have
\begin{align}
\int_{-\pi}^{\pi}|\partial_y(\chi(\lambda^{\beta}y)p(\lambda^\frac{1}{n_0+2}y))|^2 \dd y 
&\lesssim \int_{-\pi}^\pi \lambda^{2\beta} \|\partial_y \chi\|_{L^\infty}^2 |p(\lambda^\frac{1}{n_0+2}y)|^2\dd y +\int_{-\pi}^\pi |\partial_y p(\lambda^\frac{1}{n_0+2}y)|^2\lambda^\frac{2}{n_0+2}\dd y \notag\\
&\lesssim \lambda^{2\beta-\frac{1}{n_0+2}}+\lambda^{\frac{1}{n_0+2}}\|\partial_y p\|_{L^2(\mathbb{R})}^2\:.
\end{align}
Hence, overall it holds that
\begin{equation}
\int_{-\pi}^\pi |\partial_y q^\lambda(y)|^2\dd y \lesssim \lambda^\frac{2}{n_0+2}\|p\|_{H^1(\mathbb{R})}^2\:,
\end{equation}
as long as we impose the constraint
\begin{equation}
\beta < \frac{1}{n_0+2}\:.
\end{equation}
Next, consider
\begin{align}
\label{eq:potential lower bound}
\int_{-\pi}^{\pi}V(y)\left|\chi(\lambda^{\beta}y) p(\lambda^\frac{1}{n_0+2}y)\right|^2\dd y 
&\leq \int_{-\lambda^{-\beta}\pi}^{\lambda^{-\beta} \pi}V(y)|p(\lambda^\frac{1}{n_0+2})|^2\dd y\notag\\
&\leq \lambda^{-\frac{1}{n_0+2}}\int_{-\pi\lambda^{\frac{1}{n_0+2}-\beta}}^{\pi \lambda^{\frac{1}{n_0+2}-\beta}}V(\lambda^{-\frac{1}{n_0+2}}y)|p(y)|^2\dd y\:.
\end{align}
Now, it holds that \(|V(y)-\frac{V^{(2n_0+2)}(0)}{(2n_0+2)!}y^{2n_0+2}| \lesssim y^{2n_0+3}\), and so, for \(y \in [-\lambda^{-\beta}\pi,\lambda^{-\beta}\pi]\), we have
\begin{equation}
\left\|V(y)-\frac{V^{(2n_0+2)}(0)}{(2n_0+2)!}y^{2n_0+2}\right\|_{L^\infty} \lesssim \lambda^{-(2n_0+3)\beta}\:.
\end{equation}
Hence, we may upper bound the expression in \eqref{eq:potential lower bound} by
\begin{equation}
\int_{\mathbb{R}}\lambda^{-(2n_0+3)\beta}|p(y)|^2\dd y+\int_{\mathbb{R}}\frac{V^{(2n_0+2)}(0)}{(2n_0+2)!}\lambda^{-\frac{2n_0+2}{n_0+2}}y^{2n_0+2}|p(y)|^2\dd y \lesssim \lambda^{-\frac{2n_0+2}{n_0+2}}\:,
\end{equation}
under the additional constraint
\begin{equation}
\beta > \frac{2n_0+2}{(n_0+2)(2n_0+3)}\:.
\end{equation}
Note that the two constraints for $\beta$ we imposed may be satisfied simultaneously. Indeed, it always holds that $ \frac{2n_0+2}{(n_0+2)(2n_0+3)} < \frac{1}{n_0+2}$.
Hence, overall we deduce  
\begin{equation}
\lambda^2\int_{\pi}^{\pi}V(y)q^\lambda(y)^2\dd y \lesssim \lambda^{2-\frac{2n_0+2}{n_0+2}} =\lambda^{\frac{2}{n_0+2}}\:.
\end{equation}
Thus, we have shown that
\begin{equation}
\inf_{q \in H^1(\mathbb{T}), \|q\|_{L^2}=1}\left[\int_{\mathbb{T}}|\partial_yq(y)|^2\dd y+\lambda^2 \int_{\mathbb{T}}V(y)|q(y)|^2\dd y\right]\leq C \lambda^{\frac{2}{n_0+2}}\:,
\end{equation}
for some \(C\) independent of \(\lambda\). Finally, by compactness of the resolvent we know that the eigenvalues are attained. Thus, there exists some sequence \(g^\lambda\) of \(H^1\) functions satisfying 
\begin{equation}
H g^\lambda = \mu(\lambda) g^\lambda\:.
\end{equation}
where $\mu(\lambda) \leq c\lambda^{\frac{2}{n_0+2}}$.
The solution to the equation \eqref{eq:fourier shear 2} with initial datum \(r^\lambda\) is then simply 
\begin{equation}
g_\ell(t,y)=g^\lambda(y)e^{-\nu \mu(\lambda)t}\:.
\end{equation}
Taking the $L^2$ norm, we thus recover
\begin{equation}
\|g_\ell\|_{L^2}\geq \|g^\lambda(y)\|_{L^2}\e^{-c\nu^\frac{n_0+1}{n_0+2}\kappa^\frac{1}{n_0+2}|\ell|^\frac{2}{n_0+2}t}\:.
\end{equation}
Using the convexity of the \(L^2\) norm, an application of Jensen's inequality yields the desired result. 
\end{proof}

\appendix
\section{Some Technical Results}\label{app:techno}
We collect here a series of technical results which have been referenced throughout the paper. In particular, Section~\ref{Justification} will deal with formalising the arguments in the proof of our Stochastic RAGE theorem, and providing some of the necessary functional analytic details.

\subsection{Justification of the Formal Computation}\label{Justification}
Here we show that under the assumptions we placed on the inviscid SPDE, the formal computations which were carried out in the introduction may be justified. We begin by recalling the classical identification of tensor product of Hilbert spaces, and the space of Hilbert--Schmidt operators on them. In particular, we recall that the real space \(\sym\) is isometrically isomorphic to the space of self-adjoint Hilbert--Schmidt operators on \(H\).
\begin{lemma}
\label{Hilbert--Schmidt isomorphism}
The space \(\sym\) is isometrically isomorphic to the space of self-adjoint Hilbert--Schmidt operators on \(H\) under the map \(T(x\otimes y) (z) =\langle z,\overline{y}\rangle x\).
\end{lemma}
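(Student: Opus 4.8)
The plan is to first prove the statement on the whole space $H\otimes H$ and only afterwards cut it down to the symmetric subspace. Concretely, I would show that the formula $T(x\otimes y)(z)=\langle z,\overline y\rangle x$ defines an isometric \emph{complex-linear} isomorphism from $H\otimes H$ onto the complex Hilbert space $\mathrm{HS}(H)$ of Hilbert--Schmidt operators on $H$. Note that $T$ as written really is complex-linear in $y$: both $y\mapsto\overline y$ and $w\mapsto\langle z,w\rangle$ are conjugate-linear, so their composition is linear; conceptually this is just the identification of $H\otimes H$ with $H\otimes\overline H$ (via $\mathrm{id}\otimes c$, where $c$ denotes the conjugation) composed with the textbook isomorphism $H\otimes\overline H\cong\mathrm{HS}(H)$. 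With $y=\overline x$ one recovers $T_{x\otimes\overline x}(z)=\langle z,x\rangle x$, consistent with the notation used above.

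The first step is to fix an orthonormal basis $\{e_i\}_{i\in I}$ of $H$ consisting of vectors fixed by the conjugation, i.e.\ $\overline{e_i}=e_i$ (take an orthonormal basis of the real space $\mathcal H$); then $\{e_i\otimes e_j\}_{i,j\in I}$ is an orthonormal basis of $H\otimes H$. One checks that $(x,y)\mapsto T(x\otimes y)$ is bilinear, so $T$ passes to the algebraic tensor product, and that $\|T(x\otimes y)\|_{\mathrm{HS}}=\|x\|\,\|y\|$. In particular $T(e_i\otimes e_j)$ is the rank-one operator $E_{ij}\colon z\mapsto\langle z,e_j\rangle e_i$, and a standard computation gives $\langle E_{ij},E_{kl}\rangle_{\mathrm{HS}}=\tr(E_{kl}^{*}E_{ij})=\delta_{ik}\delta_{jl}$; moreover every $A\in\mathrm{HS}(H)$ expands as $A=\sum_{i,j}\langle Ae_j,e_i\rangle E_{ij}$ with $\sum_{i,j}|\langle Ae_j,e_i\rangle|^2=\|A\|_{\mathrm{HS}}^2$, so $\{E_{ij}\}$ is an orthonormal basis of $\mathrm{HS}(H)$. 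Hence $T$ carries an orthonormal basis to an orthonormal basis and extends to the desired isometric isomorphism.

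The second step is to identify $T(\sym)$ with the real Hilbert space $\mathrm{HS}_{\mathrm{sa}}(H)$ of self-adjoint Hilbert--Schmidt operators (the Hilbert--Schmidt inner product is real-valued on it, since $\overline{\tr(BA)}=\tr(A^{*}B^{*})=\tr(AB)=\tr(BA)$ for self-adjoint $A,B$). For the inclusion ``$\subseteq$'': $T_{x\otimes\overline x}=\langle\cdot,x\rangle x$ is self-adjoint, hence so is any real linear combination, and self-adjointness passes to limits in Hilbert--Schmidt norm (which dominates the operator norm, in which $A\mapsto A^{*}$ is continuous); thus $T(\sym)\subseteq\mathrm{HS}_{\mathrm{sa}}(H)$. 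For the reverse inclusion, given $A\in\mathrm{HS}_{\mathrm{sa}}(H)$ the spectral theorem for compact self-adjoint operators yields $A=\sum_{n}\gamma_n\langle\cdot,\phi_n\rangle\phi_n$ with $\gamma_n\in\RR$, $\{\phi_n\}$ orthonormal and $\sum_n\gamma_n^2=\|A\|_{\mathrm{HS}}^2<\infty$, so the partial sums converge to $A$ in Hilbert--Schmidt norm; each partial sum is $T$ of the finite real combination $\sum_{n\le N}\gamma_n\,\phi_n\otimes\overline{\phi_n}\in\sym$, and since $T$ is an isometry and $\sym$ is closed, $A\in T(\sym)$. Together with the first step this gives the claimed isometric isomorphism between $\sym$ and $\mathrm{HS}_{\mathrm{sa}}(H)$. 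The only point requiring genuine care is the conjugation bookkeeping that makes $T$ complex-linear with the stated formula; the remainder is the standard dictionary between Hilbert--Schmidt operators, their matrix coefficients, and the spectral theorem.
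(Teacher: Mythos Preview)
Your argument is correct and complete. The two-step strategy---first establishing the complex-linear isometric isomorphism $T:H\otimes H\to\mathrm{HS}(H)$ via orthonormal bases, then restricting to $\sym$ and invoking the spectral theorem for compact self-adjoint operators to match it with $\mathrm{HS}_{\mathrm{sa}}(H)$---is the standard textbook route, and your bookkeeping of the conjugation (showing $y\mapsto\langle z,\overline y\rangle$ is genuinely complex-linear) is handled carefully.

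As for comparison: the paper does not actually prove this lemma. It introduces the statement with the phrase ``We begin by recalling the classical identification of tensor product of Hilbert spaces, and the space of Hilbert--Schmidt operators on them,'' then states the lemma without proof, treating it as a known fact. So there is nothing to compare against; your write-up supplies a full argument where the paper gives none. If anything, your discussion of why $T$ is complex-linear in both slots is a useful clarification that the paper's terse statement leaves implicit.
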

Next, we provide a proof of the following Itô formula for tensor products of stochastic processes.
\begin{lemma}
\label{Ito formula}
Let $H$ be a separable Hilbert space, and let $W$ be a $Q$-Wiener processes on $H$, where $Q$ is some positive trace class operator on $H$. Let $\Psi^i_t$ $i=1,2$ be some $HS(Q^\frac{1}{2}(H),H)$ valued processes that are stochastically integrable with respect to $W$, and suppose that $x_i$, $\phi_i$ are some $H$-valued predictable processes that are further Bochner integrable, and satisfy
\begin{equation}
x_i(t)=x_0+\int_0^t \phi_i(s)ds+\int_0^t \Psi_i(s)\dd W_s\:.
\end{equation}
Note that in particular, $\Phi_2 Q \Phi_1^*$ is Hilbert--Schmidt, and therefore it may be viewed as an element in $H\otimes H$. Then, under this identification, the tensor product $x_1\otimes x_2$ satisfies 
\begin{align}
x_1(t)\otimes x_2(t)&=x_1(0)\otimes x_2(0)+\int_0^t x_1(s)\otimes \psi_2(s)+\psi_1(s)\otimes x_2(s)\dd s\notag\\
&\quad +\int_0^t \Phi_1(s)\otimes x_2(s) \dd W_s +\int_0^t x_1(s)\otimes \Phi_2(s) \dd W_s +\int_0^t \Phi_1(s)Q\Phi_2^*(s)\dd s\:,
\end{align}
where we have denoted by $\Phi_1\otimes x_2$ the Hilbert--Schmidt operator $Q^\frac{1}{2}(H)\to H\otimes H$ acting via $\Phi_1\otimes x_2(a)=\Phi_1(a)\otimes x_1$, and similarly for $x_1\otimes \Phi_2$. Note in particular that stochastic integration against these processes is still well defined in the sense of \cite{Prato_Zabczyk_2014}.
\end{lemma}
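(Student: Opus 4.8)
The plan is to reduce the bilinear It\^o formula to the classical scalar one by testing the claimed identity against the total family $\{a\otimes b:a,b\in H\}$ of $H\otimes H$. For fixed $a,b\in H$, the processes $t\mapsto\langle x_1(t),a\rangle$ and $t\mapsto\langle x_2(t),b\rangle$ are continuous semimartingales whose drift and martingale parts are read off directly from the hypothesised decomposition of the $x_i$; in particular the martingale part of the first is $\int_0^t\langle\Psi_1(s)\,\dd W_s,a\rangle$ and similarly for the second, and their product is $\langle x_1(t)\otimes x_2(t),a\otimes b\rangle$. The classical product rule then expresses $\dd(\langle x_1(t),a\rangle\langle x_2(t),b\rangle)$ as a drift $\langle\phi_1\otimes x_2+x_1\otimes\phi_2,\,a\otimes b\rangle\,\dd t$, two stochastic terms corresponding to $\Psi_1\otimes x_2$ and $x_1\otimes\Psi_2$ integrated against $\dd W$, and the quadratic-covariation term $\dd[\langle x_1,a\rangle,\langle x_2,b\rangle]_t$. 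Equivalently --- and this is really the same computation packaged differently --- one may invoke the Hilbert-space--valued It\^o formula of \cite{Prato_Zabczyk_2014} for the bounded bilinear map $F\colon H\oplus H\to H\otimes H$, $F(u,v)=u\otimes v$, evaluated on the joint process $(x_1,x_2)$ driven by $W$ through $\widetilde\Psi(s)h:=(\Psi_1(s)h,\Psi_2(s)h)$: since $F$ is bounded bilinear it is smooth with $DF(u,v)(h,k)=h\otimes v+u\otimes k$ and constant second derivative $D^2F[(h,k),(h,k)]=2\,h\otimes k$, $\widetilde\Psi$ lies in $HS(Q^{1/2}(H),H\oplus H)$ with $\|\widetilde\Psi\|_{HS}^2=\|\Psi_1\|_{HS}^2+\|\Psi_2\|_{HS}^2$, and the assumed Bochner/stochastic integrability of $x_i,\phi_i,\Psi_i$ supplies exactly the integrability that formula requires.

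The only term needing genuine computation is the second-order one, $\tfrac12\int_0^t\sum_n D^2F[\widetilde\Psi(s)Q^{1/2}e_n,\widetilde\Psi(s)Q^{1/2}e_n]\,\dd s$ for an orthonormal basis $\{e_n\}$ of $H$. Using $D^2F[(h,k),(h,k)]=2\,h\otimes k$ together with the identity $\sum_n(Ae_n)\otimes(Be_n)=AB^*$ (read through the Hilbert--Schmidt identification of Lemma~\ref{Hilbert--Schmidt isomorphism}), this collapses to $\int_0^t(\Psi_1(s)Q^{1/2})(\Psi_2(s)Q^{1/2})^*\,\dd s=\int_0^t\Psi_1(s)Q\Psi_2^*(s)\,\dd s$; the factor $2$ cancels the $\tfrac12$ of the It\^o correction, which is why no symmetrisation of $\Psi_1 Q\Psi_2^*$ appears. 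From the scalar side, the same term is exactly the quadratic covariation $\dd[\langle x_1,a\rangle,\langle x_2,b\rangle]_s=\langle\Psi_1(s)Q\Psi_2^*(s),\,a\otimes b\rangle_{H\otimes H}\,\dd s$, again by the It\^o isometry for the $Q$-Wiener process and the identification of Lemma~\ref{Hilbert--Schmidt isomorphism}. Thus, for each fixed $a,b$, the scalar It\^o formula gives precisely the pairing with $a\otimes b$ of the claimed $H\otimes H$-valued identity.

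It then remains to upgrade this family of scalar identities to an identity in $H\otimes H$. For this one checks that each candidate integrand lives in the correct space: $\phi_i\otimes x_j\in L^1_{\mathrm{loc}}(0,\infty;H\otimes H)$ since $\|\phi_i\otimes x_j\|_{H\otimes H}=\|\phi_i\|\,\|x_j\|$; $\Psi_i\otimes x_j$ is stochastically integrable because it lies in $HS(Q^{1/2}(H),H\otimes H)$ with $\|\Psi_i\otimes x_j\|_{HS}=\|\Psi_i\|_{HS}\|x_j\|$; and $\Psi_1 Q\Psi_2^*=(\Psi_1 Q^{1/2})(\Psi_2 Q^{1/2})^*$ is Hilbert--Schmidt with $\|\Psi_1 Q\Psi_2^*\|_{HS}\le\|\Psi_1 Q^{1/2}\|_{HS}\|\Psi_2 Q^{1/2}\|_{HS}$, hence Bochner integrable --- all immediate from the hypotheses. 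Using the defining properties $\langle\int_0^t G\,\dd s,\eta\rangle=\int_0^t\langle G,\eta\rangle\,\dd s$ and $\langle\int_0^t\Theta\,\dd W,\eta\rangle=\int_0^t\langle\Theta^*\eta,\dd W\rangle$ of the Bochner and stochastic integrals, pairing the proposed $H\otimes H$-valued identity with any $\eta=a\otimes b$ reproduces the scalar identity of the previous paragraph; since such $\eta$ are total in $H\otimes H$ and, for each fixed $t$, both sides are elements of $H\otimes H$ almost surely, the identity holds in $H\otimes H$. I expect the main obstacle to be exactly this last, soft bookkeeping: verifying that the tensorised integrands belong to the right Hilbert--Schmidt and Bochner classes so that the $H\otimes H$-valued integrals are bona fide objects in the sense of \cite{Prato_Zabczyk_2014}, applying the stochastic-Fubini/duality step cleanly, and keeping the identification of Lemma~\ref{Hilbert--Schmidt isomorphism} consistent throughout --- in particular in the reinterpretation of the quadratic-covariation term as $\Psi_1 Q\Psi_2^*$.
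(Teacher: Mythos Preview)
Your proposal is correct and takes essentially the same approach as the paper: the paper's entire proof is the one-line observation that the result ``follows immediately upon applying the usual infinite dimensional It\^o formula to the map $x \oplus x \to x \otimes x$,'' which is precisely your second packaging via $F(u,v)=u\otimes v$ on $H\oplus H$. Your additional scalar verification against $a\otimes b$ and the explicit bookkeeping on integrability and the quadratic-covariation term simply flesh out details the paper leaves implicit.
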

\begin{proof}
This follows immediately upon applying the usual infinite dimensional Ito formula to the map $ x 
\oplus x \to x \otimes x$.
\end{proof}
With this formula established, we may apply it to strong solutions of our SPDE to obtain the following.
\begin{lemma}
Let $f_0 \in \mathcal{C}$, and let $f(t)$ be the strong solution to the SPDE \eqref{eq:InviscidAbstract} with initial condition $f_0$. Then, letting $\Tilde{L}$ be any negative, self-adjoint extension of $L=\sum_k (iL_k\otimes \Id+\Id\otimes (iL_k))^2$, there holds that
\begin{equation}
\mathbb{E}(f(t)\otimes \overline{f(t)})=\e^{\Tilde{L}t}\mathbb{E}(f_0\otimes \overline{f_0})\:.
\end{equation}
\end{lemma}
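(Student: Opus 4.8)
The plan is to make the formal computation of Section~\ref{sub:mainideas} rigorous. Let $f(t)$ be the strong solution with $f(0)=f_0\in\cC$. By definition it satisfies the It\^o equation $\dd f=-\sum_k iL_kf\,\dd W^k+\tfrac12\sum_k(iL_k)^2f\,\dd t$, and, conjugating (the $W^k$ being real), $\overline f$ satisfies $\dd\overline f=\sum_k i\overline{L_k}\,\overline f\,\dd W^k+\tfrac12\sum_k(i\overline{L_k})^2\,\overline f\,\dd t$. The integrability bound $\EE\int_0^T\sum_k(\|L_kf\|^2+\|L_k^2f\|)\,\dd t<\infty$ in the definition of a strong solution guarantees that all the drift, diffusion and correction coefficients that appear when Lemma~\ref{Ito formula} is applied to $f(t)\otimes\overline{f(t)}$ lie in the required integrability classes; in particular the stochastic-integral part is a genuine mean-zero martingale $M_t$. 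Collecting the drift together with the It\^o correction $\sum_k(-iL_kf)\otimes(i\overline{L_k}\overline f)$ and comparing with the definition of $L$ in \eqref{eq:Loperator} (under the Hilbert--Schmidt identification of Lemma~\ref{Hilbert--Schmidt isomorphism}), one obtains
\begin{equation}
f(t)\otimes\overline{f(t)}=f_0\otimes\overline{f_0}+M_t+\int_0^t L\bigl(f(s)\otimes\overline{f(s)}\bigr)\,\dd s\;,
\end{equation}
with $L$ acting on its natural (maximal) domain.

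\textbf{Step 2: weak formulation and a priori bounds.} Taking expectations kills $M_t$; pairing with an arbitrary $\zeta\in\cD(L)=\Span\{\EE(f\otimes\overline f):f\in\cC\}$ and using that $L$ is symmetric on $\cD(L)$ gives, for $g(t):=\EE(f(t)\otimes\overline{f(t)})$,
\begin{equation}
\langle g(t),\zeta\rangle=\langle g(0),\zeta\rangle+\int_0^t\langle g(s),L\zeta\rangle\,\dd s\;,\qquad \zeta\in\cD(L)\;.
\end{equation}
Moreover $g$ is a continuous $\sym$-valued path, $\|g(t)\|\le\|g(0)\|$ by Jensen's inequality and Assumption~\ref{Assumption1}, and — crucially — the strong-solution bound $\EE\int_0^T\sum_k\|L_kf\|^2\,\dd t<\infty$ translates, via the elementary estimate $\mathcal{E}_F(f\otimes\overline f)\lesssim\sum_k\|L_kf\|^2\|f\|^2$, into $\int_0^T\mathcal{E}_F(g(s))\,\dd s<\infty$, where $\mathcal{E}_F$ denotes the Friedrichs quadratic form of $-L$. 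These three ingredients — the weak equation, the contraction bound, and the integrated finiteness of the dissipation — characterise $g$ uniquely by a standard Lions-type parabolic uniqueness argument, so that $g(t)=\e^{L_F t}g(0)$ for the Friedrichs extension $L_F$; invoking continuity and Assumption~\ref{Assumption1} then extends the identity to arbitrary $f_0\in L^2(\Omega,\cF_s,\PP,H)$.

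\textbf{Step 3: identification of the extension.} It remains to explain why $L_F$ may be replaced by \emph{any} negative self-adjoint extension $\tilde L$. The point is that the probabilistically constructed two-point semigroup $\mathcal{P}_t g(0):=g(t)$ is a strongly continuous contraction semigroup which is moreover self-adjoint — this follows from the time-reversal invariance of the driving noise, which makes $\Phi_{0,t}$ and $\Phi_{0,t}^{-1}=\Phi_{0,t}^{*}$ equal in law and hence renders $\langle\mathcal{P}_t(a\otimes\overline a),b\otimes\overline b\rangle=\EE|\langle\Phi_{0,t}a,b\rangle|^2$ symmetric in $a$ and $b$. Its generator $G\le0$ is therefore a negative self-adjoint extension of $L$, and Step~2 forces $G=L_F$; together with the fact that $L$ on $\cD(L)$ is a core, this yields the essential self-adjointness asserted in Lemma~\ref{Computation is rigorous}, so that $L$ admits a \emph{unique} negative self-adjoint extension and necessarily $\tilde L=L_F$, proving the claim. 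The main obstacle is precisely this last step: the identity is only meaningful because the Krein--von Neumann versus Friedrichs ambiguity flagged after Lemma~\ref{Computation is rigorous} collapses here, and the delicate verifications are (i) that $g(s)$ genuinely lies in the Friedrichs form domain for a.e.\ $s$, so that the Lions uniqueness theorem applies in this possibly degenerate, non-coercive setting, and (ii) that the self-adjoint semigroup $\mathcal{P}_t$ does have a generator extending $L$ on its natural domain rather than merely on a smaller one.
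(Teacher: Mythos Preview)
Your Step~1 is fine and matches the paper. The real divergence is in how you reach the identity for an \emph{arbitrary} extension $\tilde L$, and there your Step~3 has a genuine gap.

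You argue: the probabilistic semigroup $\mathcal P_t$ is self-adjoint (by time reversal), its generator is some self-adjoint extension $G$ of $L$, Step~2 gives $G=L_F$, and ``together with the fact that $L$ on $\cD(L)$ is a core'' you conclude essential self-adjointness. But that last clause is exactly the statement you are trying to prove --- having one self-adjoint extension that generates a nice semigroup does not rule out the existence of others. The standard way to close this would be the invariant-core theorem (if $\mathcal P_t\cD(L)\subset\cD(L)$ then $\cD(L)$ is a core for the generator), but that would require $\EE(f(t)\otimes\overline{f(t)})\in\cD(L)$, i.e.\ essentially $f(t)\in\cC$, which Assumption~\ref{Assumption1} does not give you. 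The time-reversal symmetry is a nice observation but it does not buy you what you need here.

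The paper sidesteps this entirely by proving the identity for \emph{any} closed extension $\tilde L$ directly, without first establishing essential self-adjointness. The key is to upgrade your weak/form-domain statement in Step~2 to a strong one: one checks via Hille's theorem that $\EE\int_0^T\|L(f(s)\otimes\overline{f(s)})\|\,\dd s<\infty$ (this is where the $\|L_k^2 f\|$ term in the strong-solution bound \eqref{eq:boundSol} is used --- your Step~2 only invokes the $\|L_k f\|^2$ term, which controls the form but not the operator). This places $g(t)$ in the actual domain $\cD(\tilde L)$ for every $t$, so $g$ satisfies $g(t)=g(0)+\int_0^t\tilde L g(s)\,\dd s$ strongly, and then the difference $d(t)=g(t)-\e^{\tilde L t}g(0)$ obeys $\partial_t\|d\|^2=2\langle\tilde L d,d\rangle\le 0$, giving $d\equiv 0$. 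No Lions machinery, no time reversal. Essential self-adjointness is then deduced \emph{afterwards} (in the paper's next lemma) as a corollary: since all extensions generate the same semigroup on a dense set, they coincide.
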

\begin{proof}
By our definition of a strong solution, we may apply the Itô formula for tensor products from Lemma~\ref{Ito formula} to get
\begin{equation}
\dd (f\otimes \overline{f})=\dd M_t+\frac{1}{2}\sum_{k}(iL_k\otimes \Id -\Id\otimes i\overline{L_k})^2f\otimes \overline{f}\dd t,
\end{equation}
where \(M_t\) is a martingale, by our definition of a strong solution.
We now construct a self-adjoint extension \(\widetilde{L}\) on \(\sym\) of the operator \(\frac{1}{2}\sum_{k}(iL_k\otimes \Id -\Id\otimes i\overline{L_k})^2\), using for instance the Friedrichs extension, see \cite{Reed_Simon_1972}. Since all we are doing is taking extensions, it still holds that 
\begin{equation}
\dd (f\otimes \overline{f})=\dd M_t+\widetilde{L}(f\otimes \overline{f})\dd t\:.
\end{equation}
Finally, we want to interchange the expectation and the action of the closed operator \(\widetilde{L}\). By Hille's Theorem for Bochner integrals, it remains to check that \(\EE\int_0^T\|\widetilde{L}f(t)\otimes \overline{f(t)}\|\dd t<\infty\). A calculation shows that the norm in the above expression is equal to
\begin{equation}
\label{eq:norm bound hille}
\biggl(\sum_{k,j}\langle L_k^2f,L_j^2f\rangle \|f\|^2+\|L_kf\|^2\|L_jf\|^2+2|\langle L_jf,L_kf\rangle|^2+2\Re\left(\langle L_k^2f,L_jf\rangle \langle f, L_j f \rangle\right) \biggr)^\frac{1}{2}\:.
\end{equation}
The last term may be bounded above by
\begin{equation}
\sum_{j}\Bigl\|\sum_{k}L_k^2f\Bigr\|\|L_jf\|^2\|f\| \leq \frac{1}{2}\Bigl(\sum_{j}\|L_jf\|^2\Bigr)^2 +\frac{1}{2}\|f\|^2\Bigl(\|\sum_{k}L_k^2f\|\Bigl)^2\:,
\end{equation}
and so, since \(\sum_{k,j}\langle L_k^2f,L_j^2f\rangle =\|\sum_{k}L_k^2f\|^2\), we see that the expression \eqref{eq:norm bound hille}
can be bounded by
\begin{equation}
2\|f\| \Bigl(\|\sum_{k}L_k^2f\|^2\Bigr)^\frac{1}{2}+2\sum_{k}\|L_kf\|^2+2\Bigl(\sum_{k,j}|\langle L_jf,L_kf\rangle|^2\Bigr)^\frac{1}{2}\:.
\end{equation}
Taking expectation and applying Cauchy--Schwarz, we see that 
\begin{equation}
\EE\int_0^T\|\widetilde{L}f(t)\otimes \overline{f(t)}\|\dd t\leq 2 \EE\|f(t)\|^2\int_0^T \|\sum_{k}L_k^2f(t)\|^2 \dd t+4\sum_{k}\int_{0}^T\EE\|L_kf(t)\|^2 \dd t\:.
\end{equation}
By our definition of a strong solution, this is indeed finite for any $T >0$, and so we may apply Hille's Theorem as desired. Finally, observe that $\mathbb{E}(f(t)\otimes \overline{f(t)})=e^{\Tilde{L}t}\mathbb{E}(f(0)\otimes \overline{f(0)})$. This follows by noting that $\mathbb{E}(f(t) \otimes \overline{f(t)}) \in \mathcal{D}(\tilde{L})$ for all $t \geq 0$, and so it holds that
\begin{equation}
\label{eq:semigroup expectation}
\mathbb{E}(f(t)\otimes \overline{f(t)})=\mathbb{E}(f_0\otimes \overline{f(t)})+\int_0^t \tilde{L}\mathbb{E}(f(s)\otimes \overline{f(s)})\dd s\:.
\end{equation}
Similarly, since $\mathbb{E}(f_0 \otimes \overline{f_0}) \in \mathcal{D}(\tilde{L})$, the equality \eqref{eq:semigroup expectation} also holds for $\e^{\Tilde{L}t}\mathbb{E}(f_0 \otimes \overline{f_0})$. Therefore, setting $d(t)=\mathbb{E}(f(t)\otimes \overline{f(t)})-e^{\Tilde{L}t}\mathbb{E}(f_0 \otimes \overline{f_0})$, it satisfies 
\begin{equation}
d(t)=\int_0^t Ld(s) \dd s\:,
\end{equation}
and so 
\begin{equation}
\partial_t\|d(t)\|^2=\langle L d(t),d(t)\rangle \leq 0\:,
\end{equation}
since $d(t) \in \mathcal{D}(\tilde{L})$ for all $t \geq 0$.
Hence $d(t) \equiv 0$, and so the proof is over.
\end{proof}
Hence we have shown that our formal computation indeed holds for strong solutions to the SPDE. Thus, if we can show that strong solutions are ``dense", this will prove that it extends to any initial condition. This is the subject of the following lemma.
\begin{lemma}
\label{semigroups are unique}
Assume that there exists a norm dense subset \(\cC\subset L^2(\Sigma, \cF,\PP,H)\), so that any solution to the SPDE \eqref{eq:InviscidAbstract} with data in \(\cC\) is a strong solution. Then, for any closed extension \(\hat{L}\) of \(L\), and any initial condition $f$, it holds that \(\EE(f(t)\otimes \overline{f}(t))=\e^{
\hat{L}t}\EE(f\otimes \overline{f})\), where \(\e^{\hat{L}t}\) is the semigroup generated by \(\hat{L}\). In particular, since finite spans of elements of the form $f \otimes \overline{f}$ are dense in $\sym$, all self-adjoint, positive extensions of \(L\) on \(\sym\) must be the same, and hence \(L\) is essentially self-adjoint.
\end{lemma}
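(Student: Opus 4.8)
The plan is to bootstrap from the preceding lemma, which already shows that for every $f_0\in\cC$ and \emph{every} nonpositive self-adjoint extension $\widetilde L$ of $L$ on $\sym$ one has $\EE(f(t)\otimes\overline{f(t)})=\e^{\widetilde L t}\,\EE(f_0\otimes\overline{f_0})$, where $f(t)=\Phi_{0,t}f_0$. The left-hand side is visibly independent of the extension, and the same argument covers any closed extension $\hat L$ of $L$ that generates a $C_0$-semigroup. So the whole task is to turn this identity, which a priori holds only for one $f_0$ at a time, into an identity of bounded operators on a dense subspace of $\sym$, and then invoke uniqueness of $C_0$-semigroups with a given generator.

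First I would record the continuity estimates that let one pass from $\cC$ to all of $L^2(\Omega,\cF,\PP;H)$. Writing $f\otimes\overline f-g\otimes\overline g=f\otimes\overline{(f-g)}+(f-g)\otimes\overline g$ and applying Cauchy--Schwarz after taking expectations yields
\[
\bigl\|\EE(f\otimes\overline f)-\EE(g\otimes\overline g)\bigr\|_{\sym}\le\bigl(\|f\|_{L^2(\Omega;H)}+\|g\|_{L^2(\Omega;H)}\bigr)\,\|f-g\|_{L^2(\Omega;H)},
\]
so $f_0\mapsto\EE(f_0\otimes\overline{f_0})$ is locally Lipschitz; since $\Phi_{0,t}$ is a pathwise isometry by~\ref{Assumption1} and hence an isometry of $L^2(\Omega;H)$, the map $f_0\mapsto\EE(f(t)\otimes\overline{f(t)})$ obeys the same bound with $\|f_0\|_{L^2(\Omega;H)}$ in place of $\|f\|,\|g\|$. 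Because $\widetilde L\le 0$ the operators $\e^{\widetilde L t}$ are contractions, so $f_0\mapsto\e^{\widetilde L t}\EE(f_0\otimes\overline{f_0})$ is continuous as well; as the two maps agree on the dense set $\cC$, the identity $\EE(f(t)\otimes\overline{f(t)})=\e^{\widetilde L t}\EE(f_0\otimes\overline{f_0})$ extends to arbitrary $f_0\in L^2(\Omega,\cF,\PP;H)$, in particular to deterministic $f_0$.

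Next I would show that $\Span\{\EE(f_0\otimes\overline{f_0}):f_0\in\cC\}$ is dense in $\sym$. Taking $f_0=x$ deterministic and letting $x$ range over the dense \emph{deterministic} subspace of $H$ supplied by~\ref{Assumption1} (whose elements, regarded as constant random variables, generate strong solutions and may thus be assumed to lie in $\cC$), the Lipschitz bound above exhibits each $x\otimes\overline x$, $x\in H$, as a limit of elements $\EE(f_0\otimes\overline{f_0})$; since $\sym$ is by definition the closed real span of $\{x\otimes\overline x:x\in H\}$, density follows. Now fix two nonpositive self-adjoint extensions $\widetilde L_1,\widetilde L_2$ of $L$: by the previous two steps the contractions $\e^{\widetilde L_1 t}$ and $\e^{\widetilde L_2 t}$ agree on a dense subspace of $\sym$, hence coincide for all $t\ge 0$, and differentiating at $t=0$ forces $\widetilde L_1=\widetilde L_2$. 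Thus $-L\ge 0$ is a symmetric operator whose Friedrichs and Krein--von Neumann extensions coincide, and by the Birman--Krein--Vishik theory of nonnegative self-adjoint extensions (see the survey~\cite{Fucci_Gesztesy_Kirsten_Littlejohn_Nichols_Stanfil_2022}) this is equivalent to $L$ being essentially self-adjoint.

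The one genuinely delicate point I anticipate is this last implication --- extracting essential self-adjointness from uniqueness of the nonpositive self-adjoint extension. It rests on the fact that $L_F\ne L_K$ as soon as the deficiency indices of $L$ are nonzero, and it is precisely what makes assumption~\ref{Assumption1} indispensable (as already flagged in the remark following Lemma~\ref{Computation is rigorous}, where the Krein extension may carry a strictly larger kernel). Everything before it is soft: one Cauchy--Schwarz estimate, a density argument riding on~\ref{Assumption1}, and the uniqueness of $C_0$-semigroups with a given generator. A secondary, bookkeeping-level matter is to make sure the admissible dense class of initial data contains enough deterministic vectors for the density step; this is automatic, since~\ref{Assumption1} already provides a dense deterministic subspace of $H$ all of whose elements generate strong solutions.
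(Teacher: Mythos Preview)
Your proposal is correct and follows essentially the same approach as the paper: a Lipschitz estimate on $f\mapsto \EE(f\otimes\overline f)$ (via the same splitting $f\otimes\overline f-g\otimes\overline g=f\otimes\overline{(f-g)}+(f-g)\otimes\overline g$) combined with the isometry property of $\Phi_{0,t}$ to extend the semigroup identity by density, then the observation that all nonpositive self-adjoint extensions generate the same semigroup on a dense set. Your write-up is in fact more complete than the paper's, which leaves the final step---that uniqueness of the nonpositive self-adjoint extension forces essential self-adjointness---as an unproved assertion; your explicit appeal to the Birman--Krein--Vishik characterization ($L_K=L_F$ iff the deficiency indices vanish) fills that gap.
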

\begin{proof}
We show that if $f_n \to f \in L^2(\Omega,\mathcal{F},\mathbb{P},H)$, then \(\EE(f_n(t)\otimes \overline{f}_n(t))\to \EE(f(t)\otimes \overline{f}(t)) \). Indeed,  
\begin{align}
\EE\|f_n(t)&\otimes \overline{f}_n(t)-f(t)\otimes \overline{f}(t)\| \\
&\leq \EE\|f_n(t)\otimes \overline{f}_n(t)-f(t)\otimes \overline{f}_n(t)\|+\EE\|f(t)\otimes \overline{f}_n(t)-f(t)\otimes \overline{f}(t)\|\notag\\
&\leq 2\EE(\|f_n(t)-f(t)\|\|f_n(t)\|)  \to 0,
\end{align}
as \(n \to \infty\). Hence, since \(\EE(f_n(t)\otimes \overline{f}_n(t))=\e^{\hat{L}t}\EE(f_n\otimes \overline{f_n})\), taking \(n \to \infty\) and using continuity, we see that 
\begin{equation}
\EE(f(t)\otimes \overline{f}(t))=\e^{\hat{L}t}\EE(f\otimes \overline{f})\:,
\end{equation}
concluding the proof.
\end{proof}
\begin{lemma}\label{self-adjointtensor}
Let \(A,B\) be two self-adjoint, unbounded operators on a Hilbert space \(H\) so that \(\cD(A^m)\cap \cD(B^m)\) is dense in \(H\), for some \(m\in \mathbb{N}\). Let \(D_0\) be the finite span of the set \(\{\phi\otimes \varphi: \phi \in \cD(A^m), \varphi\in \cD(B^m)\}\), and define the operator \((A\otimes \Id+\Id\otimes B)^m:D_0\to H\otimes H\) by \(\sum_{j=0}^m \binom{m}{j} A^j\phi\otimes B^{m-j}\varphi\) for \(\phi\otimes \varphi \in D_0\). Then, the closure of this operator is self-adjoint, i.e. \(D_0\) is a core for \((A\otimes \Id+\Id\otimes B)^m\). In particular, if \(A,B\) are positive and have a compact resolvent, then the same holds true for \((A\otimes \Id+\Id\otimes B)^m\).
\end{lemma}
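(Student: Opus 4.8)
The plan is to diagonalise all the operators involved simultaneously by means of the multiplication-operator form of the spectral theorem, which reduces the assertion to showing that an explicit subspace is a core for a multiplication operator by a real-valued function. Concretely, I would first invoke the spectral theorem (see e.g.\ \cite{Reed_Simon_1972}) to produce $\sigma$-finite measure spaces $(X,\mu)$, $(Y,\nu)$, real measurable functions $a\colon X\to\RR$ and $b\colon Y\to\RR$, and unitary identifications $H\cong L^2(X,\mu)$ and $H\cong L^2(Y,\nu)$ under which $A$ and $B$ become the multiplication operators $M_a$ and $M_b$, with $\cD(A^m)=\{g:a^m g\in L^2\}$ and $\cD(B^m)=\{h:b^m h\in L^2\}$. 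Under the induced identification $H\otimes H\cong L^2(X\times Y,\mu\otimes\nu)$ the operators $A\otimes\Id$ and $\Id\otimes B$ become multiplication by $a(x)$ and by $b(y)$, and $D_0$ is identified with the linear span of product functions $g(x)h(y)$ with $a^m g,b^m h\in L^2$; on such a product $(A\otimes\Id+\Id\otimes B)^m$ acts as multiplication by $w(x,y):=(a(x)+b(y))^m=\sum_{j=0}^m\binom{m}{j}a(x)^j b(y)^{m-j}$. Since $|t|^j\le 1+|t|^m$ for $0\le j\le m$, one has $a^j g\in L^2$ and $b^{m-j}h\in L^2$ for all such $j$, so $w\cdot(gh)\in L^2$; hence $D_0\subseteq\cD(\Lambda)$, where $\Lambda:=M_w$ is equipped with its maximal domain $\{f:wf\in L^2\}$, and $\Lambda$ agrees with $(A\otimes\Id+\Id\otimes B)^m$ on $D_0$. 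Being multiplication by a real-valued measurable function, $\Lambda$ is self-adjoint.

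Next I would prove that $D_0$ is a core for $\Lambda$, i.e.\ that it is dense in $\cD(\Lambda)$ for the graph norm $f\mapsto\|f\|+\|wf\|$. Given $f\in\cD(\Lambda)$ and $N\in\NN$, set $\chi_N:=\mathbf{1}_{\{|a|\le N\}}\otimes\mathbf{1}_{\{|b|\le N\}}$ and $f_N:=\chi_N f$; then $f_N\to f$ and $wf_N=\chi_N(wf)\to wf$ in $L^2$ by dominated convergence, so it is enough to approximate an $f$ supported in $\{|a|\le N\}\times\{|b|\le N\}$ for some fixed $N$. On this set $w$ is bounded by $(2N)^m$, so graph-norm approximation of such an $f$ reduces to plain $L^2$-approximation; this is supplied by the density of the algebraic tensor product of $L^2(\{|a|\le N\},\mu)$ and $L^2(\{|b|\le N\},\nu)$ inside $L^2(\{|a|\le N\}\times\{|b|\le N\})$, and every product function arising there lies in $D_0$ since its factors are supported where $a$, respectively $b$, is bounded. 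This shows $D_0$ is a core, so the closure of $(A\otimes\Id+\Id\otimes B)^m|_{D_0}$ equals the self-adjoint operator $\Lambda$.

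For the last statement I would use eigenbases directly. If $A,B$ are positive with compact resolvent, pick orthonormal eigenbases $\{e_i\}$ and $\{f_j\}$ with eigenvalues $0\le\alpha_i\to\infty$ and $0\le\beta_j\to\infty$; since eigenvectors lie in every domain $\cD(A^m)$ and $\cD(B^m)$, the family $\{e_i\otimes f_j\}$ is an orthonormal basis of $H\otimes H$ contained in $D_0$, on which $(A\otimes\Id+\Id\otimes B)^m$ acts diagonally with eigenvalue $(\alpha_i+\beta_j)^m$. As the closure is self-adjoint and this basis spans a dense subspace, it diagonalises the closure, and since for every $R>0$ only finitely many pairs $(i,j)$ satisfy $\alpha_i+\beta_j\le R$, the eigenvalues $(\alpha_i+\beta_j)^m$ tend to infinity; hence $(A\otimes\Id+\Id\otimes B)^m$ is positive with compact resolvent.

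The hard part will be the core statement in the second paragraph: one must handle the unbounded multipliers $a$ and $b$ with some care, the key point being that $g\in\cD(A^m)$ forces $a^j g\in L^2$ for every $j\le m$, which is exactly what keeps the truncation-then-tensor-approximation procedure inside $D_0$. Everything else is routine bookkeeping with the spectral theorem and standard facts about multiplication operators.
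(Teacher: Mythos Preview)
Your argument is correct and, for the self-adjointness statement, follows the same route as the paper: both reduce via the spectral theorem to showing that $D_0$ is a core for multiplication by $(a(x)+b(y))^m$ on $L^2(X\times Y)$. The paper is content to say ``from this the first claim follows'' after making the identification, whereas you actually carry out the core argument via truncation to $\{|a|\le N\}\times\{|b|\le N\}$ followed by $L^2$-density of algebraic tensors; this is a useful elaboration but not a new idea.

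Where you genuinely diverge is the compact-resolvent statement. The paper invokes a characterisation of compact multiplication operators due to Takagi (\cite{Takagi_1992}): $M_\theta$ is compact on $L^2_\mu$ iff $L^2_\mu$ restricted to $\{|\theta|\ge\eps\}$ is finite-dimensional for every $\eps>0$, and then checks this for $\theta=(f_A+f_B)^{-m}$ via the inclusion $\{(f_A+f_B)^{-m}\ge\eps\}\subset\{f_A^{-1}\ge\eps^{1/m}\}\times\{f_B^{-1}\ge\eps^{1/m}\}$. Your route through the explicit eigenbasis $\{e_i\otimes f_j\}$ is more elementary and more transparent here, since positivity plus compact resolvent already hands you diagonalisations of $A$ and $B$; it avoids the external reference and makes the finiteness of $\{(i,j):\alpha_i+\beta_j\le R\}$ immediate. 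The paper's approach has the minor advantage of staying entirely within the multiplication-operator picture set up for the first part, but yours is the argument most readers would supply.
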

\begin{proof}
By the spectral theorem, there exists unitary maps \(U_i :H \to L^2(\Sigma_i,\mu_i)\) under which the operators \(A,B\) are multiplication operators (say with associated functions \(f_A,f_B\)). Furthermore, we can take the \(\mu_i\) to be finite measures. Then, define \(U:H\otimes H \to L^2(\Sigma_1,\mu_1)\otimes L^2(\Sigma_2,\mu_2)\) by \(U=U_1\otimes U_2\). By the finiteness of the measures, it holds that \(L^2(\Sigma_1,\mu_1)\otimes L^2(\Sigma_2,\mu_2) \cong L^2(\Sigma_1\times \Sigma_2,\mu_1\otimes \mu_2)\), and it is easy to see that under \(U\), \((A\otimes \Id+\Id\otimes B)^m=(f_A(x)+f_B(y))^m\). From this the first claim follows. 

For the final result, we use the characterisation of compact multiplication operators given by \cite{Takagi_1992}, which says that the $\theta$-multiplication operator \(M_\theta :L^2_\mu \to L^2_\mu\) is compact if and only if the restriction of \(L^2_\mu\) to \(S^\eps=\{x:|\theta(x)|\geq \eps\}\) is finite dimensional for any \(\eps \geq 0\). Hence, since \(0 \in \rho(A) \cap \rho(B)\), where $\rho$ denotes the resolvent set, we have that for \(S^\eps_A=\{x:|f_A(x)^{-1}|\geq \eps\}\) (and similarly defining \(S_B^\eps\)), the spaces \(L^2(S_A^\eps,\mu_A)\), \(L^2(S_B^\eps,\mu_B)\) are finite dimensional. But then the same holds for \(L^2(S_A^\eps,\mu_A) \otimes L^2(S_B^\eps,\mu_B) \cong L^2(S_A^\eps \times S_B^\eps,\mu_A\otimes \mu_B)\). Finally, note that by the positivity of \(A,B\), 
$$
S^\eps=\{(x,y):\frac{1}{|f_A(x)+f_B(y)|^m} \geq \eps \} \subset S_A^{\eps^\frac{1}{m}}\times S_B^{\eps^\frac{1}{m}}\:,
$$
so \(L^2(S^\eps,\mu_A\otimes \mu_B)\subset L^2(S_A^{\eps^\frac{1}{m}} \times S_B^{\eps^\frac{1}{m}},
\mu_A\otimes \mu_B)\) is finite dimensional, completing the proof.
\end{proof}
See also \cite{Reed_Simon_1973} for a more general discussion of tensor products of unbounded operators.

\addtocontents{toc}{\protect\setcounter{tocdepth}{0}}
\section*{Acknowledgments}
The research of MCZ was partially supported by the Royal Society URF\textbackslash R1\textbackslash 191492 and ERC-EPSRC Horizon Europe Guarantee EP/X020886/1. The research of DV was funded by the Imperial College President’s PhD Scholarships.

\addtocontents{toc}{\protect\setcounter{tocdepth}{1}}

\bibliographystyle{Martin}
\bibliography{bib.bib}

\end{document}